\documentclass[11pt]{amsart}
\usepackage{amsmath}
\usepackage{amsfonts}
\usepackage{amssymb}
\usepackage{latexsym}

\newtheorem{theorem}{Theorem}[section]
\newtheorem{lemma}[theorem]{Lemma}
\newtheorem{coro}[theorem]{Corollary}
\newtheorem{proposition}[theorem]{Proposition}
\newtheorem{otherth}{\bf Theorem}

\newtheorem{otherl}{\bf Lemma}

\newcommand{\Cn}{\mathbb{C}^n}
\newcommand{\Bn}{\mathbb{B}_ n}
\newcommand{\Sn}{\mathbb{S}_ n}

\numberwithin{equation}{section}

\begin{document}
\title[Integration operators on Hardy spaces]{Integration operators between Hardy spaces on the unit ball of $\Cn$}
\
\author[Jordi Pau]{Jordi Pau}
\address{Jordi Pau \\Departament de Matem\`{a}tica Aplicada i Analisi\\
Universitat de Barcelona\\ Gran Via 585 \\
08007 Barcelona\\
Catalonia} \email{jordi.pau@ub.edu}

\subjclass[2010]{32A35, 32A36, 47B10, 47B38}

\keywords{Integration operators, Hardy spaces, Carleson measures, Schatten classes}

\thanks{The author is
 supported by SGR grant $2009$SGR $420$ (Generalitat de
Catalunya) and DGICYT grant MTM$2011$-$27932$-$C02$-$01$
(MCyT/MEC)}

\maketitle

\begin{abstract}
We completely describe the boundedness of the Volterra type operator $J_ g$ between Hardy spaces in the unit ball of $\Cn$. The proof of the one dimensional case used tools, such as the strong factorization for Hardy spaces, that are not available in higher dimensions, and therefore new techniques are developed. In particular, a generalized version of the description of Hardy spaces in terms of the area function is needed.
\end{abstract}

\section{Introduction and main results}
Let $\Bn$ be the open unit ball in $\Cn$. Denote by $H(\Bn)$ the space of all holomorphic functions in $\Bn$. For a function $g\in H(\Bn)$,
define the operator
\begin{equation}\label{defJ}
 J_ g f(z)=\int_{0}^{1} f(tz) Rg(tz) \frac{dt}{t},\qquad z\in \Bn
\end{equation}
for $f$ holomorphic in $\Bn$. Here $Rg$ denotes the radial derivative of $g$, that is,
\[Rg(z)= \sum_{k=1}^{n} z_ k \frac{\partial g}{\partial z_ k} (z),\qquad z=(z_ 1,\dots,z_ n)\in \Bn.\]
In the one dimensional case $n=1$, the operator $J_ g$ was first considered in the setting of Hardy spaces by Pommerenke \cite{Pom} related to the study of certain properties of $BMOA$ functions. We want to mention here that a closely related operator was introduced earlier by Calder\'{o}n in \cite{Cal}. After the pioneering works of Aleman, Siskakis and Cima \cite{AC,AS0,AS} describing the boundedness and compactness of the operator $J_ g$ in Hardy and Bergman spaces, the mentioned operator became extremely popular, being studied in many spaces of analytic functions (see \cite{AC,AO,AS0,AS,OC,PP, PR} for example). As far as we know, the generalization of the operator $J_ g$ acting on holomorphic functions in the unit ball of $\Cn$ (as defined here) was introduced by Z. Hu \cite{Hu}. A fundamental property of the operator $J_ g$, that follows from an easy calculation with \eqref{defJ}, is the following basic formula involving the radial derivative $R$ and the operator $J_ g$:
\begin{equation}\label{form}
R(J_ g f)(z)=f(z)\,Rg(z),\quad z\in \Bn.
\end{equation}
 The boundedness and compactness of $J_ g$ has been extensively studied in many spaces of holomorphic functions in the unit ball (see \cite{XiaoLMS} and \cite{Xiao-Conf} for the corresponding study on Bergman and Bloch type spaces). However, the case of the Hardy spaces on the unit ball, that is, the study of $J_ g:H^p(\Bn)\rightarrow H^q(\Bn)$ (that, in my opinion, is the most important case, and is the setting were the operator $J_ g$ was originally studied) is missing, only the elementary case $q=p=2$ (see \cite{LS1}) and the case $p<q$ (see \cite{AS}) has been done before. Our goal is to fill this gap, and we completely describe the boundedness and compactness of $J_ g:H^p(\Bn)\rightarrow H^q(\Bn)$ for all $0<p,q<\infty$.

 For $0<p<\infty$, the Hardy space $H^p:=H^p(\Bn)$ consists of those holomorphic functions $f$ in $\Bn$ with
 \[ \|f\|_{H^p}^p=\sup_{0<r<1}\int_{\Sn} \!\! |f(r\zeta)|^p \,d\sigma(\zeta)<\infty,\]
where $d\sigma$ is the surface measure on the unit sphere $\Sn:=\partial \Bn$ normalized so that $\sigma(\Sn)=1$. We refer to the books \cite{Alek}, \cite{Rud} and \cite{ZhuBn} for the theory of Hardy spaces in the unit ball.

 The norm of the operator $J_ g:H^p\rightarrow H^q$ is denoted by $\|J_ g\|_{H^p\rightarrow H^q}$ and, when $q=p$  its norm is simply denoted by $\|J_ g\|$. Now we are ready to state our main results describing the boundedness of $J_ g:H^p\rightarrow H^q$ extending the one-dimensional results obtained by Aleman-Siskakis \cite{AS0} (the case $q=p\ge 1$) and by Aleman-Cima \cite{AC} (the remainder cases).

\begin{theorem}\label{mt1}
Let $0<p<\infty$ and $g\in H(\Bn)$. Then
$J_ g$ is bounded on $H^p$ if and only if $g\in BMOA$. Moreover,
\[ \|J_ g\|\asymp \|g\|_{BMOA}.\]
\end{theorem}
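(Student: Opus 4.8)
The plan is to by-pass the strong factorization used in the one–dimensional argument and instead reduce the boundedness of $J_ g$ to a Carleson measure condition by means of a Littlewood--Paley / area function description of $H^p$, the identity \eqref{form} being the bridge. The model is $p=2$: since $J_ g f(0)=0$ and $R(J_ g f)=f\,Rg$, the square function identity $\|F\|_{H^2}^2\asymp|F(0)|^2+\int_{\Bn}|RF(z)|^2(1-|z|^2)\,dV(z)$ (with $dV$ the normalized volume measure) gives
\[\|J_ g f\|_{H^2}^2\asymp\int_{\Bn}|f(z)|^2\,d\mu_ g(z),\qquad d\mu_ g(z):=|Rg(z)|^2(1-|z|^2)\,dV(z).\]
Thus $J_ g$ is bounded on $H^2$ exactly when $\mu_ g$ is a Carleson measure, which is precisely the Carleson measure description of $BMOA$, and this already yields $\|J_ g\|\asymp\|g\|_{BMOA}$ in the case $p=2$.

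For arbitrary $0<p<\infty$ I would replace the square function identity by an area function description. Writing $\Gamma(\zeta)$ for the Koranyi admissible region at $\zeta\in\Sn$ and setting
\[A(F)(\zeta)^2=\int_{\Gamma(\zeta)}|RF(z)|^2(1-|z|^2)^{1-n}\,dV(z),\]
the first main step is to prove the description $\|F\|_{H^p}^p\asymp|F(0)|^p+\int_{\Sn}A(F)(\zeta)^p\,d\sigma(\zeta)$, valid for every $0<p<\infty$ and formulated with the radial derivative $R$ alone. Applying it to $F=J_ g f$ and invoking \eqref{form} together with $J_ g f(0)=0$ reduces the theorem to the embedding
\[\int_{\Sn}\Big(\int_{\Gamma(\zeta)}|f(z)|^2(1-|z|^2)^{-n}\,d\mu_ g(z)\Big)^{p/2}\,d\sigma(\zeta)\lesssim\|f\|_{H^p}^p,\qquad f\in H^p.\]
As the Fubini computation behind the case $p=2$ shows (using $\sigma\{\zeta:z\in\Gamma(\zeta)\}\asymp(1-|z|^2)^n$), this embedding should hold for every $p$ precisely when $\mu_ g$ is a Carleson measure, i.e. precisely when $g\in BMOA$.

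For the sufficiency I would deduce the embedding from the Carleson condition on $\mu_ g$ through tent space theory: a Carleson measure induces a bounded area operator $H^p\to L^p(\Sn)$, which for $p<2$ is best handled via the atomic decomposition and duality of tent spaces, reducing the estimate to testing $\mu_ g$ against atoms, and for $p\ge2$ can be obtained more directly from the $p=2$ estimate and the nonisotropic maximal function. For the necessity I would test on the normalized functions $f_ a(z)=(1-|a|^2)^{b}/(1-\langle z,a\rangle)^{b+n/p}$, which satisfy $\|f_ a\|_{H^p}\asymp1$ and $|f_ a(z)|\gtrsim(1-|a|^2)^{-n/p}$ on the Carleson box $Q_\delta(a)$ with $\delta=1-|a|$; bounding the area integral of $J_ g f_ a$ from below by a suitable multiple of the local mass $\mu_ g(Q_\delta(a))$ forces $\mu_ g(Q_\delta(a))\lesssim\delta^n$, hence $g\in BMOA$, and keeping track of constants gives the quantitative comparison $\|J_ g\|\asymp\|g\|_{BMOA}$.

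The step I expect to be the main obstacle is the first one: establishing the area function description of $H^p$ for the whole range $0<p<\infty$ using only the radial derivative $R$, since the classical statements are phrased with the invariant or full gradient and first order derivatives are delicate for small $p$; and then pushing the implication ``$\mu_ g$ Carleson $\Rightarrow$ embedding'' through the range $p<2$, where the $L^2$ identity and Schur-type tests are no longer available and the full tent space machinery on $\Bn$ must be deployed. Controlling the geometry of the admissible regions and of the nonisotropic Carleson boxes uniformly is the technical heart; granting these, both implications close as above.
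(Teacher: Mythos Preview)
Your reduction via the area function and the measure $d\mu_g=|Rg|^2(1-|z|^2)\,dv$ is the right framework, and the case $p=2$ coincides with the paper. The area description of $H^p$ in terms of the radial derivative that you flag as the main obstacle is in fact a known result (Theorem~\ref{AreaT}, due to Ahern--Bruna and Fefferman--Stein), so that step is available off the shelf.

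There is, however, a genuine gap in your necessity argument when $0<p<2$. After plugging $f_a$ into the area description and using $|f_a|\gtrsim(1-|a|^2)^{-n/p}$ on the Carleson region $B_\delta(\zeta_0)$, you need a lower bound of the type
\[\int_{\Sn}\Big(\int_{\Gamma(\zeta)\cap B_\delta(\zeta_0)}|Rg(z)|^2(1-|z|^2)^{1-n}\,dv(z)\Big)^{p/2}d\sigma(\zeta)\;\gtrsim\;\Big(\frac{\mu_g(B_\delta(\zeta_0))}{\delta^n}\Big)^{p/2}.\]
For $p\ge2$ this follows from Jensen on the outer integral, since the inner function is supported on a ball $Q(\zeta_0,C\delta)$ of $\sigma$-measure $\asymp\delta^n$. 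For $p<2$ the Jensen inequality reverses: if the $\mu_g$-mass inside $B_\delta$ concentrates near a small boundary arc, the inner integrals are large only on a tiny set of $\zeta$'s, and the $L^{p/2}$ norm can be arbitrarily small relative to $\mu_g(B_\delta)$. A single test function cannot detect this, so your scheme does not close for $p<2$.

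The paper sidesteps this by never testing on a single function. It proves, for every $p$, the full Carleson embedding $\int_{\Bn}|f|^p\,d\mu_g\lesssim\|J_g\|^2\|f\|_{H^p}^p$ (equivalent to $g\in BMOA$ with norm control via \eqref{bmoa}). For $p\ge2$ this comes from pulling $|f^*|^{p-2}$ out of the area integral and applying H\"older; for $p<2$ the paper uses the Hardy--Stein identity $\|J_gf\|_{H^p}^p\asymp\int|J_gf|^{p-2}|f|^2\,d\mu_g$, H\"older in the reverse direction, and a bootstrap on the dilates $g_\rho$ (so that $\|g_\rho\|_{BMOA}$ is a priori finite), arriving at $\|g\|_{BMOA}^2\lesssim\|g\|_{BMOA}^{2-p}\|J_g\|^p$. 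On the sufficiency side your tent-space/atomic route would work but is heavier than needed: for $p<2$ the paper simply bounds $|f(z)|^2\le f^*(\zeta)^{2-p}|f(z)|^p$ inside the area integral, applies H\"older with exponent $2/p$ on $\Sn$, then Fubini and \eqref{bmoa}; no tent-space machinery is required.
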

Here, the notation $A\asymp B$ means that the two quantities are comparable. We want to mention here that, in one dimension, a different proof (of some parts) of that in \cite{AS}, \cite{AC} has been given recently in \cite{PR} and \cite{WuIEOT}. In my opinion, the proof we will give here (of course valid also in one dimension) is more simpler and elegant than the ones presented before.

In order to state the case $p<q$ we need to introduce the Lipschitz type spaces $\Lambda(\alpha)$. For $0<\alpha\le 1$, we say that an analytic function $g$ belongs to the Lipschitz type space $\Lambda(\alpha)$ if
\[\|g\|_{\Lambda(\alpha)}=\sup_{z\in \Bn} (1-|z|^2)^{1-\alpha} \,|Rg(z)|<\infty.\]
This coincides \cite[Chapter 7]{ZhuBn} with the space of holomorphic functions $g$ in $\Bn$ with
\[|g(z)-g(w)|\le C |z-w|^{\alpha},\qquad z,w\in \Bn.\]
\begin{theorem}\label{mt2}
Let $0<p<q<\infty$, $g\in H(\Bn)$ and $\alpha=n(\frac{1}{p}-\frac{1}{q})$.
 \begin{enumerate}
 \item[(a)] If $\alpha\le 1$ then $J_ g:H^p\rightarrow H^q$ is bounded if and only if $g\in \Lambda(\alpha)$. Moreover,
\[\|J_ g\|_{H^p\rightarrow H^q}\asymp \|g\|_{\Lambda(\alpha)}.\]
\item[(b)] If $\alpha>1$, then  $J_ g:H^p\rightarrow H^q$ is bounded if and only if $g$ is constant, that is, $J_ g\equiv 0$.
\end{enumerate}
\end{theorem}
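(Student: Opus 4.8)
The plan is to treat both implications of part (a) and both of part (b) through the single structural identity $R(J_g f)=f\,Rg$ together with an area-function description of $H^q$. Writing $\Gamma(\zeta)$ for the Kor\'anyi admissible approach region with vertex $\zeta\in\Sn$, I will use the characterization
\[ \|h\|_{H^q}^q\asymp|h(0)|^q+\int_{\Sn}\Big(\int_{\Gamma(\zeta)}|Rh(z)|^2\,(1-|z|^2)^{1-n}\,dV(z)\Big)^{q/2}d\sigma(\zeta),\]
whose higher-dimensional, weighted form is exactly the ingredient singled out in the abstract. Since $J_g f(0)=0$ and $R(J_g f)=f\,Rg$, applying this to $h=J_g f$ gives
\[ \|J_g f\|_{H^q}^q\asymp\int_{\Sn}\Big(\int_{\Gamma(\zeta)}|f(z)|^2\,d\mu_g(z)\Big)^{q/2}d\sigma(\zeta),\qquad d\mu_g(z)=|Rg(z)|^2(1-|z|^2)^{1-n}dV(z),\]
so the whole problem reduces to an embedding of $H^p$ into the tent space attached to $\mu_g$, in complete parallel with the $BMOA$/Carleson-measure mechanism behind Theorem \ref{mt1}.

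\textbf{Necessity and part (b).} For the lower bounds I would test against the normalized kernels $f_w(z)=(1-|w|^2)^{c-n/p}(1-\langle z,w\rangle)^{-c}$, which satisfy $\|f_w\|_{H^p}\asymp1$ for $c$ large, together with the pointwise estimate $|Rh(w)|\lesssim(1-|w|^2)^{-n/q-1}\|h\|_{H^q}$ valid for $h\in H^q$. Applying this to $h=J_g f_w$ and using $R(J_g f_w)(w)=f_w(w)Rg(w)=(1-|w|^2)^{-n/p}Rg(w)$ yields $|Rg(w)|\lesssim\|J_g\|_{H^p\to H^q}(1-|w|^2)^{\alpha-1}$, i.e. $g\in\Lambda(\alpha)$ with $\|g\|_{\Lambda(\alpha)}\lesssim\|J_g\|_{H^p\to H^q}$; this gives necessity in (a) and one half of the norm comparison. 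In case (b), $\alpha>1$ forces $|Rg(w)|\lesssim(1-|w|^2)^{\alpha-1}\to0$, so $Rg$ is a bounded holomorphic function with vanishing radial boundary values, whence $Rg\equiv0$ and $g$ is constant; the converse is trivial since then $J_g\equiv0$.

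\textbf{Sufficiency in (a).} Assuming $g\in\Lambda(\alpha)$, the pointwise bound $d\mu_g(z)\le\|g\|_{\Lambda(\alpha)}^2(1-|z|^2)^{2\alpha-1-n}dV(z)$ reduces matters to proving
\[ \int_{\Sn}\Big(\int_{\Gamma(\zeta)}|f(z)|^2(1-|z|^2)^{2\alpha-1-n}\,dV(z)\Big)^{q/2}d\sigma(\zeta)\lesssim\|f\|_{H^p}^q.\]
A direct computation shows that this model measure charges each admissible tent over $Q(\zeta,\delta)$ by $\asymp\delta^{2\alpha}\asymp\sigma(Q(\zeta,\delta))^{2(1/p-1/q)}$, which is exactly the geometric condition governing the embedding $H^p\hookrightarrow T^q_2(\mu_g)$ for $p<q$. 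I would then prove the embedding by combining the area-function description with the non-tangential maximal function $Nf$ (so that $\|Nf\|_{L^p(\Sn)}\asymp\|f\|_{H^p}$) and a covering/Carleson argument that converts the $L^p$ control of $Nf$ into $L^q$ control of the area integral, the gain in integrability being precisely the one encoded by $\alpha=n(1/p-1/q)$.

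\textbf{Main obstacle.} The difficulty, and the reason the one-dimensional proof does not transfer, is precisely this embedding. The weight $(1-|z|^2)^{2\alpha-1-n}$ does \emph{not} define a classical Carleson measure when $\alpha<n/2$ (its mass on full Carleson boxes is infinite), so the correct statement must be phrased over admissible cones rather than boxes, and the comfortable route through the strong factorization $f=f_1f_2$ of Hardy functions is unavailable in $\Bn$. My plan is to replace factorization by the generalized area-function characterization and prove the cone-based embedding directly, splitting the argument according to whether $q/2\ge1$ or $q/2<1$ (and, within the maximal-function step, according to the position of $p$ relative to $2$): Khinchine's inequality or tent-space duality in the former regime, and an elementary pointwise estimate in the latter. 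Assembling these pieces gives $\|J_g\|_{H^p\to H^q}\lesssim\|g\|_{\Lambda(\alpha)}$, which with the necessity bound yields the stated equivalence.
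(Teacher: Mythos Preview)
Your necessity argument and part (b) are correct and essentially identical to the paper's: test on normalized kernels, use the pointwise derivative estimate for $H^q$-functions, and read off $(1-|z|^2)^{1-\alpha}|Rg(z)|\lesssim\|J_g\|_{H^p\to H^q}$.

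For the sufficiency, however, you are making the problem much harder than it is, and your perceived ``main obstacle'' is misplaced. The lack of strong factorization in $\Bn$ is the genuine difficulty for Theorem~\ref{mt3} (the case $q<p$), not for Theorem~\ref{mt2}; in the one-dimensional proofs, factorization is never used when $p<q$. The paper's route for sufficiency bypasses the tent-space embedding entirely and rests on a single elementary tool: the inclusion $H^p\subset A^s_{\gamma-1}$ with $\gamma=n s(1/p-1/s)$ for any $s>p$ (Theorem~\ref{Hardy-Bergman}). One then argues by cases on $q$. For $q=2$ the Littlewood--Paley identity plus $|Rg(z)|^2(1-|z|^2)\le\|g\|_{\Lambda(\alpha)}^2(1-|z|^2)^{2\alpha-1}$ and the Bergman embedding finishes immediately. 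For $q>2$ one applies the Hardy--Stein identity to $\|J_g f\|_{H^q}^q$, splits by H\"older into a $\|J_g f\|_{A^s_{\gamma-1}}^{q-2}$ factor (bounded by $\|J_g f\|_{H^q}^{q-2}$ via the embedding) and a $\|f\|_{A^{s_q}_{\beta-1}}$ factor (bounded by $\|f\|_{H^p}$ via the embedding again), and cancels. For $q<2$ one uses the area description, pulls out a power of the admissible maximal function, collapses the $L^1(\Sn)$ integral by \eqref{EqG}, and lands on the same Bergman embedding. No covering lemmas, Khinchine, or tent-space duality are needed.

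Your proposed route---proving directly that $H^p\hookrightarrow T^q_2(d\mu)$ for $d\mu=(1-|z|^2)^{2\alpha-1-n}dV$---is a valid alternative in principle, but as written it is a plan, not a proof: ``a covering/Carleson argument that converts the $L^p$ control of $Nf$ into $L^q$ control of the area integral'' and ``Khinchine's inequality or tent-space duality in the former regime'' are gestures toward an argument rather than one. If you pursue it you will essentially be reproving a Duren-type theorem for tent spaces, which is considerably more work than invoking the (one-line) Hardy--Bergman inclusion.
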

After the finishing of the paper we realized that Theorem \ref{mt2} has been also obtained recently in \cite{AS}. For completeness and convenience of the reader, we offer our proof here. It remains to deal with the other non diagonal case, result that is stated below.

\begin{theorem}\label{mt3}
Let $0<q<p<\infty$ and $g\in H(\Bn)$. Then $J_ g :H^p\rightarrow H^q$ is bounded if and only if $g\in H^r$, where $\frac{1}{r}=\frac{1}{q}-\frac{1}{p}$. Moreover, we have
\[ \|g\|_{H^r}\asymp \|J_ g\|_{H^p\rightarrow H^q}.\]
\end{theorem}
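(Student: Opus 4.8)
The plan is to translate everything into the language of the area (Luzin) integral, since the target exponent $q$ may be smaller than $1$, where the duality methods used in the one-dimensional proofs break down. Write $N f(\zeta)=\sup_{z\in\Gamma(\zeta)}|f(z)|$ for the nontangential maximal function over the admissible approach regions $\Gamma(\zeta)$, $\zeta\in\Sn$, and let
\[ A(f)(\zeta)=\Big(\int_{\Gamma(\zeta)}|Rf(z)|^2\,(1-|z|^2)^{1-n}\,dV(z)\Big)^{1/2} \]
be the associated area function, so that $\|f\|_{H^p}\asymp |f(0)|+\|A(f)\|_{L^p(\Sn)}\asymp \|Nf\|_{L^p(\Sn)}$ for every $0<p<\infty$. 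Since $J_gf(0)=0$ and, by the basic formula \eqref{form}, $R(J_gf)=f\cdot Rg$, the area-function description yields the clean reformulation
\[ \|J_gf\|_{H^q}^q\asymp\int_{\Sn}\Big(\int_{\Gamma(\zeta)}|f(z)|^2\,|Rg(z)|^2\,(1-|z|^2)^{1-n}\,dV(z)\Big)^{q/2}d\sigma(\zeta), \]
while $\|g\|_{H^r}\asymp\|A(g)\|_{L^r(\Sn)}$. Thus the theorem reduces to showing that pointwise multiplication by $Rg$ maps $H^p$ boundedly into the tent space of order $q$ exactly when $Rg$ lies in the tent space of order $r$, the numerology $\tfrac1q=\tfrac1p+\tfrac1r$ being precisely that of Hölder's inequality.

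For the sufficiency I would exploit that $|f(z)|\le Nf(\zeta)$ whenever $z\in\Gamma(\zeta)$. Pulling this bound out of the inner integral turns the right-hand side above into $\int_{\Sn}Nf(\zeta)^q A(g)(\zeta)^q\,d\sigma$, and Hölder's inequality with the conjugate exponents $p/q$ and $r/q$ (legitimate because $\tfrac qp+\tfrac qr=1$) bounds this by $\|Nf\|_{L^p}^q\,\|A(g)\|_{L^r}^q\asymp\|f\|_{H^p}^q\,\|g\|_{H^r}^q$. Hence $g\in H^r$ forces $J_g:H^p\to H^q$ to be bounded with $\|J_g\|_{H^p\to H^q}\lesssim\|g\|_{H^r}$.

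The necessity is the substantial half, and it is where the generalized area-function description is needed. By duality of $L^{r/q}(\Sn)$, whose conjugate exponent is exactly $p/q$, one has
\[ \|g\|_{H^r}^q\asymp\|A(g)^q\|_{L^{r/q}(\Sn)}=\sup\Big\{\int_{\Sn}A(g)(\zeta)^q\,h(\zeta)\,d\sigma(\zeta):\ h\ge0,\ \|h\|_{L^{p/q}(\Sn)}\le1\Big\}. \]
So it suffices, given such a test weight $h$, to manufacture a single $f\in H^p$ with $\|f\|_{H^p}\lesssim1$ for which $\|J_gf\|_{H^q}^q\gtrsim\int_{\Sn}A(g)^q\,h\,d\sigma$; the assumed boundedness then gives $\int_{\Sn} A(g)^q h\,d\sigma\lesssim\|J_g\|^q_{H^p\to H^q}$, and taking the supremum over $h$ yields $\|g\|_{H^r}\lesssim\|J_g\|_{H^p\to H^q}$. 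In view of the reformulation, the required lower bound amounts to producing $f$ with $|f(z)|^2\gtrsim h(\zeta)^{2/q}$ in an averaged sense along $\Gamma(\zeta)$ (weighted by $|Rg|^2$) while keeping $Nf\asymp h^{1/q}$, so that $\|f\|_{H^p}^p\lesssim\int_{\Sn}h^{p/q}\,d\sigma\le1$.

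Constructing this $f$ is the main obstacle: one cannot simultaneously force $|f|$ to be comparable to $h^{1/q}$ pointwise throughout each approach region and to have boundary maximal function of size $h^{1/q}$, since $h$ may oscillate wildly. This is exactly the difficulty resolved in one dimension by the strong factorization of Hardy functions, which is unavailable here. My plan is to circumvent it by discretizing over a lattice $\{a_j\}$ separated in the Bergman metric, forming $f$ as a randomized sum $f_\varepsilon(z)=\sum_j\varepsilon_j c_j\big((1-|a_j|^2)/(1-\langle z,a_j\rangle)\big)^{N}$ of suitably normalized reproducing-kernel test functions with coefficients $c_j$ governed by the local size of $h$, and then averaging over the signs $\varepsilon_j$. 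Khinchine's inequality linearizes both $\|f_\varepsilon\|_{H^p}$ and the area integral of $J_gf_\varepsilon$ into square functions of the $c_j$, after which a discrete Hölder estimate recombines them. Making this precise is what requires a generalized, discretized form of the area-function characterization adapted to these test functions, which I expect to be the technical heart of the argument.
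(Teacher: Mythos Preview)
Your sufficiency argument is exactly the paper's: pull the nontangential maximal function out of the inner integral, apply H\"older with exponents $p/q$ and $r/q$, and use $\|Nf\|_{L^p}\lesssim\|f\|_{H^p}$ together with the area description of $H^r$.

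For the necessity your plan diverges from the paper's, and the step you defer is where the real difficulty sits. After Khinchine--Kahane (in the Hilbert space $L^2(\Gamma(\zeta),d\mu_g)$) the averaged lower bound reads
\[
\mathbb{E}\,\|J_g f_\varepsilon\|_{H^q}^q\asymp\int_{\Sn}\Big(\sum_j|c_j|^2\int_{\Gamma(\zeta)}|K_j(z)|^2\,\frac{d\mu_g(z)}{(1-|z|^2)^n}\Big)^{q/2}d\sigma(\zeta),
\]
and you want this to dominate $\int_{\Sn}A(g)^q h\,d\sigma$. A fixed lattice puts infinitely many $a_j$ in every approach region $\Gamma(\zeta)$, so $\{c_j\}$ cannot simply sample $h^{1/q}$ at boundary projections of the $a_j$: that would knock $\{c_j\}$ out of $T^p(Z)$ and destroy the upper bound $\|f_\varepsilon\|_{H^p}\lesssim1$. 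What one actually needs is a stopping-time selection of the $a_j$ adapted to the level sets of $h$, so that each $\Gamma(\zeta)$ meets essentially one relevant $a_j$; carrying this out is tantamount to reproving a tent-space version of Luecking's embedding theorem with the nonlinear outer exponent $q/2$. This may well be feasible, but it is substantially harder than the linear-in-$\mu$ version, and your proposal does not indicate how to do it.

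The paper sidesteps this by linearizing via the Hardy--Stein identity $\|J_gf\|_{H^q}^q\asymp\int_{\Bn}|J_gf|^{q-2}|f|^2\,d\mu_g$ rather than the area form. The necessity then reduces to Luecking's theorem on embeddings $H^p\hookrightarrow L^s(\mu)$ for $s<p$ (this is where the randomized-kernel/Khinchine argument is actually deployed, and it is linear in $\mu$). Concretely, for $r>2$ one has
\[
\|g\|_{H^r}^2\asymp\sup_{\|f\|_{H^p}=1}\int_{\Bn}|f|^s\,d\mu_g,\qquad s=p-\tfrac{2(p-q)}{q},
\]
and this supremum is bounded by $\|J_g\|_{H^p\to H^q}^2$ via H\"older and the Hardy--Stein formula, together with a bootstrap (it suffices, by dilation, to prove $\|g\|_{H^r}\lesssim\|J_g\|$ assuming $g$ already lies in $H^r$). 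For $r\le2$ the same scheme runs once $d\mu_g$ is replaced by $|g|^{2t-2}\,d\mu_g$ for a suitable $0<t<1$; the needed input is the generalized area theorem
\[
\|g\|_{H^{pt}}^{pt}\asymp\int_{\Sn}\Big(\int_{\Gamma(\zeta)}|g(z)|^{2t-2}|Rg(z)|^2(1-|z|^2)^{1-n}\,dv(z)\Big)^{p/2}d\sigma(\zeta),
\]
which is the paper's main technical novelty and is a different animal from the discretized test-function statement you anticipate.
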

The proofs of the previous results in the one dimensional setting used, in a decisive way, tools such as the strong factorization for Hardy spaces and some results of Aleksandrov and Peller \cite{AP} that are not available in higher dimensions, so that the generalization to the unit ball of $\Cn$ is not a routine that any machine can do, and new techniques and ideas must be developed. We also want to notice that in the proofs of the previous theorems we can always assume that $g(0)=0$ since $J_ g=J_{g+c}$ for any constant $c$.

The paper is organized as follows: in Section \ref{s2} we recall some well known results that will be used in the proofs. Theorems \ref{mt1}, \ref{mt2} and \ref{mt3} are proved in Sections \ref{s3}, \ref{s4} and \ref{s5} respectively. Characterizations of the compactness of the integration operator $J_ g$ and membership in the Schatten-Von Neumann ideals $S_ p(H^2)$ are obtained in Section \ref{s6}.

Throughout the paper, constants are
often given without computing their exact values, and the value of a constant $C$ may change
from one occurrence to the next. We also use the notation $a\lesssim b$ to indicate that there is a constant $C>0$
with $a\le C b$.

\section{Background}\label{s2}
In this section we introduce some notation and recall some well known results that will be used throughout the paper. For any two points $z=(z_ 1,\dots,z_ n)$ and $w=(w_ 1,\dots,w_ n)$ in $\Cn$ we write
$$\langle z,w\rangle =z_ 1\bar{w}_ 1+\dots +z_ n \bar{w}_ n,$$
 and
 $|z|=\sqrt{\langle z,z\rangle}=\sqrt{|z_ 1|^2+\dots +|z_ n|^2}.$
 Denote by $dv$ the usual Lebesgue volume measure on $\mathbb{B}_ n$, normalized so that the volume of $\mathbb{B}_ n$ is one.
\subsection{Invariant type derivatives}
Let
\begin{displaymath}
\Delta=4 \sum_{k=1}^{n} \frac{\partial ^2}{\partial z_ k \,\partial \bar{z}_ k} =\sum_{k=1}^{n} \left (\frac{\partial ^2}{\partial x_ k^2}+\frac{\partial ^2}{\partial y_ k^2}\right )
\end{displaymath}
be the standard Laplace operator on $\Cn$, where
\begin{displaymath}
\frac{\partial}{\partial z_ k}=\frac{1}{2} \left (\frac{\partial}{\partial x_ k}-i \frac{\partial}{\partial y_ k}\right ) \quad \textrm{and}\quad \frac{\partial}{\partial \bar{z}_ k}=\frac{1}{2} \left (\frac{\partial}{\partial x_ k}+i \frac{\partial}{\partial y_ k}\right )
\end{displaymath}
provided the use of the identification $z_ k=x_ k+i y_ k$ for $1\le k\le n$ is made. If $f$ is a twice differentiable function in $\Bn$, the invariant Laplacian of $f$ is defined as
\[ (\widetilde{\Delta} f )(z)=\Delta(f\circ \varphi_ z)(0),\qquad z\in \Bn, \]
where $\varphi_ z$ is the automorphism of $\Bn$ that interchanges the points $0$ and $z$.

If $f$ is a differentiable function in $\Bn$, we use $\nabla f$ to denote its real gradient. The (real) invariant gradient of $f$ is then defined as
\[\widetilde{\nabla} f(z)= \nabla (f\circ \varphi_ z)(0),\qquad z\in \Bn.\]
When $f$ is holomorphic on $\Bn$ it is typical to use also the complex gradient
\[\nabla _ h f(z)=\left (\frac{\partial f}{\partial z_ 1} (z) , \dots,\frac{\partial f}{\partial z_ n} (z) \right )\]
and call $|\nabla _ h f(z)|$ the holomorphic gradient of $f$ at $z$. Similarly, one defines
$\widetilde{\nabla}_ h f(z)= \nabla _ h(f\circ \varphi_ z)(0)$, $z\in \Bn$,
and refer to the quantity $|\widetilde{\nabla}_ h f(z)|$ as the holomorphic invariant gradient of $f$ at $z$. This can not create any confusion, since for $f$ holomorphic, one has $|\nabla f(z)|=2 \,|\nabla _ h f(z)|$.

\subsection{The invariant Green's formula}
It is a consequence of the invariant Green's formula \cite[Theorem 1.25]{ZhuBn} that, if $f$ is of class $C^2$ on $\Bn$ then
\begin{displaymath}
\int_{\Bn} \widetilde{\Delta} f(z)\, G(z)\,d\lambda_ n(z)=\int_{\Sn} f(\zeta)\,d\sigma(\zeta)-f(0),
\end{displaymath}
(see \cite{PTW}) where $G(z)$ is the invariant Green function of $\Bn$ given by
\[ G(z)=\frac{1}{2n} \int_{|z|}^{1} (1-t^2)^{n-1} t^{-2n+1} dt, \]
and
 $$d\lambda_ n(z)=\frac{dv(z)}{(1-|z|^2)^{n+1}}$$
  is the hyperbolic or invariant  measure on $\Bn$. The constant appearing in \cite{PTW} is absorbed in the normalized measure $dv$ since the volume of $\Bn$ is exactly $\pi^n/ n !$.

\subsection{Hardy-Stein type inequalities}
It is a consequence of the Hardy-Stein identity for the ball (see \cite[Chapter 4]{ZhuBn} or \cite{Now}) that, if $g(0)=0$, then for $0<p<\infty$ one has
\begin{displaymath}
\|g\|^p_{H^p}\asymp \int_{\Bn} |g(z)|^{p-2}\,|Rg(z)|^2 (1-|z|^2)\,dv(z).
\end{displaymath}
There are analogues of these inequalities using the gradient or the invariant gradient instead of the radial derivative \cite{ZhuBn}, \cite{Stoll1}. For example, in terms of the gradient, one simply replaces $Rg$ in the above estimate by the real gradient $\nabla g$, and using the invariant gradient, one has the following:
\begin{displaymath}
\|g\|^p_{H^p}\asymp \int_{\Bn} |g(z)|^{p-2}\,|\widetilde{\nabla} g(z)|^2 (1-|z|^2)^n\,d\lambda_ n(z).
\end{displaymath}
Given a function $f\in L^1(\Sn)$, the invariant Poisson integral of $f$, denoted by $u_ f$, is defined on $\Bn$ as
\[ u_ f(z)=\int_{\Bn} f(\zeta)\,\frac{(1-|z|^2)^n}{|1-\langle z,\zeta \rangle |^{2n}}\,d\sigma(\zeta).\]
Note that the invariant Poisson kernel here is different from the associated Poisson kernel when $\Bn$ is thought of as the unit ball in $\mathbb{R}^{2n}$, unless $n=1$. The invariant Poisson integral $u_ f$ is $\mathcal{M}$-harmonic on $\Bn$, meaning that is annihilated by the invariant Laplacian, that is, $\widetilde {\Delta} u_ f=0$ . The version of the Hardy-Stein inequalities for $\mathcal{M}$-harmonic functions
(see \cite{JP} or \cite{Stoll1}) is the following: let $1<p<\infty$ and $f\in L^p(\Sn)$. Then
\[ \|f\|^p_{L^p(\Sn)}\asymp |u_ f(0)|^2+\int_{\Bn} |u_ f(z)|^{p-2} |\widetilde{\nabla} u_ f(z)|^2 \,(1-|z|^2)^n \,d\lambda_ n(z).\]
When $p=2$ the previous estimates are usually referred as the Littlewood-Paley inequalities.
\subsection{Admissible maximal and area functions}
For $\zeta \in \Sn$ and $\alpha>1$ the admissible approach region $\Gamma_{\alpha}(\zeta)$ is defined as
\begin{displaymath}
\Gamma(\zeta)=\Gamma_{\alpha}(\zeta)=\left \{z\in \Bn: |1-\langle z,\zeta\rangle |<\frac{\alpha}{2} (1-|z|^2) \right \}.
\end{displaymath}
If $I(z)=\{\zeta \in \Sn: z\in \Gamma(\zeta)\}$, then $\sigma(I(z))\asymp (1-|z|^2)^{n}$, and it follows from Fubini's theorem that, for a positive function $\varphi$, and a finite positive measure $\nu$, one has
\begin{equation}\label{EqG}
\int_{\Bn} \varphi(z)\,d\nu(z)\asymp \int_{\Sn} \left (\int_{\Gamma(\zeta)} \varphi(z) \frac{d\nu(z)}{(1-|z|^2)^{n}} \right )d\sigma(\zeta).
\end{equation}
This fact will be used repeatedly throughout the paper. \\

For $\alpha>1$ and $f$ continuous on $\Bn$, the admissible maximal function $f^*_{\alpha}$ is defined on $\Sn$ by
\[ f^*(\zeta)=f_{\alpha}^*(\zeta)=\sup_{z\in \Gamma_{\alpha}(\zeta)} |f(z)|.\]
We need the following well known result on the $L^p$-boundedness of the admissible maximal function that can be found in \cite[Theorem 5.6.5]{Rud} or \cite[Theorem 4.24]{ZhuBn}.
\begin{otherth}\label{NTMT}
Let $0<p<\infty$ and $f\in H(\Bn)$. Then
\[ \|f^*\|_{L^p(\Sn)}\le C \|f\|_{H^p}.\]
\end{otherth}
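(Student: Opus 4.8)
The plan is to dominate the admissible maximal function by the Hardy--Littlewood maximal function of the boundary values through the invariant Poisson integral, and then exploit the $L^q$-boundedness of the latter for $q>1$ by means of a sub-exponent trick. First I would reduce to the case where $f$ extends continuously to $\overline{\Bn}$: replacing $f$ by its dilates $f_ r(z)=f(rz)$ and proving the bound with a constant independent of $r$, letting $r\to1$ at the end. This lets me work with an honest boundary function $\tilde f\in L^p(\Sn)$ satisfying $\|\tilde f\|_{L^p(\Sn)}=\|f\|_{H^p}$.

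Fix an exponent $0<s<p$ (for instance $s=p/2$). Since $f$ is holomorphic, $f\circ\varphi_ z$ is holomorphic and hence $|f\circ\varphi_ z|^s$ is subharmonic; evaluating the ordinary Laplacian at the origin then shows $\widetilde{\Delta}\,|f|^s\ge0$, that is, $|f|^s$ is $\mathcal{M}$-subharmonic on $\Bn$. Comparing $|f|^s$ with the invariant Poisson integral $u_{|\tilde f|^s}$, which is $\mathcal{M}$-harmonic with the same boundary values, the difference is $\mathcal{M}$-subharmonic with vanishing boundary data, so the maximum principle for the invariant Laplacian yields the pointwise majorization
\[ |f(z)|^s \le u_{|\tilde f|^s}(z)=\int_{\Sn}|\tilde f(\eta)|^s\,\frac{(1-|z|^2)^n}{|1-\langle z,\eta\rangle|^{2n}}\,d\sigma(\eta),\qquad z\in\Bn. \]

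Next I would show that the admissible maximal function of this invariant Poisson integral is controlled by the Hardy--Littlewood maximal function $M$ on the space of homogeneous type $(\Sn,d\sigma)$ with nonisotropic balls $Q(\zeta,\delta)=\{\eta\in\Sn:|1-\langle\zeta,\eta\rangle|<\delta\}$. The key geometric fact is that for $z\in\Gamma_{\alpha}(\zeta)$ one has $|1-\langle z,\eta\rangle|\asymp (1-|z|^2)+|1-\langle\zeta,\eta\rangle|$; substituting this into the kernel and decomposing $\Sn$ into the nonisotropic coronas $Q(\zeta,2^{k}(1-|z|^2))\setminus Q(\zeta,2^{k-1}(1-|z|^2))$, the resulting rapidly convergent geometric series gives
\[ \sup_{z\in\Gamma_{\alpha}(\zeta)} u_{|\tilde f|^s}(z)\lesssim M\bigl(|\tilde f|^s\bigr)(\zeta), \]
and therefore $f^*(\zeta)^s\lesssim M(|\tilde f|^s)(\zeta)$. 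Finally, since $p/s>1$, the operator $M$ is bounded on $L^{p/s}(\Sn)$, so raising to the power $1/s$ and integrating gives
\[ \int_{\Sn}(f^*)^p\,d\sigma\lesssim\int_{\Sn}\bigl(M(|\tilde f|^s)\bigr)^{p/s}\,d\sigma\lesssim\int_{\Sn}|\tilde f|^{p}\,d\sigma=\|f\|_{H^p}^p, \]
which is the desired estimate.

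The main obstacle is the kernel estimate in the third step: one must establish the nonisotropic comparison $|1-\langle z,\eta\rangle|\asymp(1-|z|^2)+|1-\langle\zeta,\eta\rangle|$ for $z\in\Gamma_{\alpha}(\zeta)$ and organize the resulting sum over nonisotropic coronas, which is precisely where the several-variable geometry enters. The sub-exponent choice $s<p$ is essential, because $M$ is not bounded on $L^1$; a direct comparison at the exponent $p$, where $|\tilde f|^p$ is merely integrable, would not close the argument, whereas passing to $s<p$ moves everything into the range $p/s>1$ in which the Hardy--Littlewood maximal inequality is available, and this single device covers the entire range $0<p<\infty$.
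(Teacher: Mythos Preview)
The paper does not actually prove this statement; it records it as a well-known result and cites \cite[Theorem 5.6.5]{Rud} and \cite[Theorem 4.24]{ZhuBn}. Your argument is correct and is essentially the classical proof found in those references: the $\mathcal{M}$-subharmonicity of $|f|^s$ for holomorphic $f$, the resulting domination $|f|^s\le u_{|\tilde f|^s}$ by the invariant Poisson integral of the boundary data, the pointwise control of the admissible maximal function of $u_{|\tilde f|^s}$ by the nonisotropic Hardy--Littlewood maximal function on $\Sn$, and the sub-exponent trick $s<p$ to place everything in the range $p/s>1$ where the maximal theorem applies. The kernel comparison $|1-\langle z,\eta\rangle|\asymp(1-|z|^2)+|1-\langle\zeta,\eta\rangle|$ for $z\in\Gamma_{\alpha}(\zeta)$ that you single out is exactly the geometric ingredient that makes the corona decomposition work, and it follows from the triangle inequality for $d(z,w)=|1-\langle z,w\rangle|^{1/2}$ together with the trivial bound $1-|z|\le|1-\langle z,\eta\rangle|$. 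The dilation reduction at the start is also standard; one only needs Fatou's lemma and the uniform convergence $f_r\to f$ on compacta to pass to the limit.
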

Another function we need is the admissible area function $A_{\alpha}f$ defined on $\Sn$ by
\[Af(\zeta)=A_{\alpha}f(\zeta)=\left ( \int_{\Gamma_{\alpha}(\zeta)} |Rf(z)|^2 \,(1-|z|^2)^{1-n}dv(z)\right )^{1/2}.\]
The following result \cite{AB}, \cite{FS} describing the functions in the Hardy space in terms of the admissible area function, is the version for the unit ball of $\Cn$ of the famous Calder\'{o}n area theorem \cite{Cal} who extended to all $0<p<\infty$ the result proved for $p>1$ by Marcinkiewicz and Zygmund \cite{MZ}.
\begin{otherth}\label{AreaT}
Let $0<p<\infty$ and $g\in H(\Bn)$. Then $g\in H^p$ if and only if $Ag\in L^p(\Sn)$. Moreover, if $g(0)=0$ then
\[ \|g\|_{H^p}\asymp \|Ag\|_{L^p(\Sn)}.\]
\end{otherth}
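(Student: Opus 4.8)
The plan is to prove $\|g\|_{H^p}\asymp\|Ag\|_{L^p(\Sn)}$ (under the normalization $g(0)=0$) by comparing both sides to the $L^p(\Sn)$-norm of the admissible maximal function $g^*$. Indeed, the inequality $\|g\|_{H^p}\le\|g^*\|_{L^p(\Sn)}$ is immediate, since the boundary values of $g$ are dominated by $g^*$, while the reverse estimate $\|g^*\|_{L^p(\Sn)}\lesssim\|g\|_{H^p}$ is precisely Theorem~\ref{NTMT}; thus $\|g\|_{H^p}\asymp\|g^*\|_{L^p(\Sn)}$, and the whole problem reduces to proving $\|Ag\|_{L^p(\Sn)}\asymp\|g^*\|_{L^p(\Sn)}$ (the value $|g(0)|$, which $g^*$ sees but $Ag$ does not, being killed by our normalization). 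This comparison I would carry out via two good-$\lambda$ inequalities in the spirit of Calder\'on and Fefferman--Stein.

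The engine of both directions is a localized $L^2$-estimate. Applying the Hardy--Stein identity together with the Fubini relation \eqref{EqG} to $\varphi(z)=|Rg(z)|^2(1-|z|^2)^{1-n}$ and $d\nu(z)=(1-|z|^2)^{n}\,dv(z)$ recovers the global identity $\|Ag\|_{L^2(\Sn)}^2\asymp\|g\|_{H^2}^2$ when $g(0)=0$, which settles the case $p=2$. The same computation carried out over a Carleson tent $T(B)$ above a boundary ball $B\subset\Sn$, by means of the invariant Green's formula applied on truncated regions, gives the local bound $\int_{T(B)}|Rg(z)|^2(1-|z|^2)^{1-n}\,dv(z)\lesssim M_B^2\,\sigma(B)$, where $M_B$ is the supremum of $|g|$ over a fixed dilate of $T(B)$. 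This localization is what converts pointwise control of $g$ (through $g^*$) into control of the tent-integral defining $Ag$, and, read in the opposite direction, control of $Ag$ into $L^2$-control of the oscillation of $g$.

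For the first direction, $\|Ag\|_{L^p(\Sn)}\lesssim\|g^*\|_{L^p(\Sn)}$, one fixes $\lambda>0$, decomposes the open set $\{Ag>\lambda\}$ into boundary balls $B_j$, and establishes the good-$\lambda$ inequality $\sigma(\{Ag>2\lambda,\ g^*\le\gamma\lambda\})\le C\gamma^2\,\sigma(\{Ag>\lambda\})$: over each $B_j$ one splits every cone into its portion inside the tent $T(B_j)$ and its portion outside; the outer portion lies where $Ag\le\lambda$ and is harmless, while on the inner portion $|g|\lesssim\gamma\lambda$, so the local $L^2$-bound gives $\int_{T(B_j)}|Rg|^2(1-|z|^2)^{1-n}\,dv\lesssim(\gamma\lambda)^2\sigma(B_j)$, and a Chebyshev inequality bounds the exceptional set in $B_j$ by $C\gamma^2\sigma(B_j)$. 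The reverse and harder inequality $\sigma(\{g^*>2\lambda,\ Ag\le\gamma\lambda\})\le C\gamma^2\,\sigma(\{g^*>\lambda\})$ is proved symmetrically, now decomposing $\{g^*>\lambda\}$: reading the local identity in the other direction, smallness of the area integral over the saw-tooth region above $B_j$ bounds the $L^2(B_j)$-norm of the oscillation of $g$ along truncated cones by a multiple of $\gamma\lambda\,\sigma(B_j)^{1/2}$, whence Chebyshev again yields the factor $\gamma^2$. In both cases the standard distribution-function argument --- choosing $\gamma$ so that $C\,2^p\gamma^2<\tfrac12$ and integrating the inequalities against $p\lambda^{p-1}\,d\lambda$ --- upgrades the good-$\lambda$ estimates to the claimed $L^p$ comparisons.

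The main obstacle is the faithful transfer of this real-variable machinery to the non-isotropic boundary geometry of $\Bn$: the admissible regions $\Gamma_\alpha(\zeta)$ are modeled on the Kor\'anyi (non-isotropic) balls rather than Euclidean ones, and both the invariant Poisson kernel and the invariant Green function differ from their Euclidean analogues when $n>1$. Consequently the Whitney-type decomposition of the superlevel sets into boundary balls, the truncation of cones into tents and saw-tooth regions, and above all the localized Green's-formula estimate over tents must all be set up with respect to this geometry, where the comparability $\sigma(I(z))\asymp(1-|z|^2)^n$ and the relation \eqref{EqG} are the basic geometric inputs. Once the local $L^2$-inequality over tents is established, the two good-$\lambda$ inequalities, and hence the theorem, follow by the classical scheme of the Calder\'on area theorem.
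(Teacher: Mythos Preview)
Your plan is sound: the good-$\lambda$ comparison between $Ag$ and $g^*$, driven by a localized $L^2$ (Green's formula) estimate over non-isotropic tents, is exactly the Fefferman--Stein/Calder\'on route, and with the geometric inputs you list (the Kor\'anyi balls, the comparability $\sigma(I(z))\asymp(1-|z|^2)^n$, the invariant Green function) it carries over to $\Bn$. The obstacles you flag are the real ones and none of them is fatal.

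The paper, however, takes a genuinely different path. It proves the statement as the special case $t=1$ of Theorem~\ref{KP}, and that proof never touches good-$\lambda$ inequalities, Whitney decompositions, or distribution functions. Instead it pivots on the Hardy--Stein identity $\|g\|_{H^p}^p\asymp\int_{\Bn}|g|^{p-2}|Rg|^2(1-|z|^2)\,dv$ and the Fubini relation \eqref{EqG}, and then closes each of the four cases by H\"older's inequality and the admissible maximal theorem. For $p>2$ the inequality $\|g\|_{H^p}^p\lesssim\|Ag\|_{L^p}^p$ (Step~1) and for $p<2$ the inequality $\|Ag\|_{L^p}^p\lesssim\|g\|_{H^p}^p$ (Step~2) follow by pulling a factor $|g^*|^{\cdots}$ out of the cone integral and applying H\"older. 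The harder direction for $p>2$ (Step~3) is done by dualizing the $p/2$-power against a positive $\varphi\in L^{p/(p-2)}(\Sn)$, using $|\widetilde\nabla g|^2=\widetilde\Delta(|g|^2)$ and the invariant Green's formula globally on $\Bn$ (not locally over tents), and controlling the cross term $|\widetilde\nabla(|g|^2)|\,|\widetilde\nabla u_\varphi|$ by Cauchy--Schwarz and a bootstrap back to the quantity being estimated; the remaining case $p<2$ (Step~4) is then reduced to Step~3 by H\"older again.

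What each approach buys: your real-variable scheme is robust and conceptually clean, but you would have to redo the local Green's formula estimate from scratch if you wanted the weighted variant with the factor $|g|^{2t-2}$ inside the cone (which is what the paper actually needs for Theorem~\ref{mt3}). The paper's Hardy--Stein/H\"older argument handles all $t>0$ at once with essentially no extra work, and it avoids the stopping-time machinery entirely, at the price of a slightly trickier duality step in the range $p>2$.
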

A generalized version of Theorem \ref{AreaT} is given in Theorem \ref{KP}, with a proof that includes Theorem \ref{AreaT} itself.

\subsection{Embedding of Hardy spaces into Bergman spaces}
For $0<p<\infty$ and $\alpha>-1$, the weighted Bergman space $A^p_{\alpha}(\Bn)$ consists of those functions $f$ holomorphic on $\Bn$ with
\[ \|f\|_{A^p_{\alpha}}=\left (\int_{\Bn} |f(z)|^p\,dv_{\alpha}(z) \right )^{1/p}<\infty.\]
Here
$dv_{\alpha}(z)=c_{\alpha}\,(1-|z|^2)^{\alpha}dv(z),$
where $c_{\alpha}$ is a positive constant chosen so that $v_{\alpha}(\Bn)=1$.
We will make use of the following result that appears in \cite[Theorem 4.48]{ZhuBn}.
\begin{otherth}\label{Hardy-Bergman}
For $0<p<q<\infty$ we have $H^p\subset A^q_{\alpha}(\Bn)$ with
$$\alpha=nq \Big (\frac{1}{p}-\frac{1}{q}\Big )-1=\frac{nq}{p}-(n+1).$$
Moreover, there exists a constant $C>0$ such that $\|f\|_{A^q_{\alpha}} \le C \|f\|_{H^p}.$
\end{otherth}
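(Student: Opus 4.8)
The plan is to prove the quantitative estimate
\[
\int_{\Bn}|f(z)|^q\,(1-|z|^2)^{\alpha}\,dv(z)\lesssim \|f\|_{H^p}^q,
\]
which contains the inclusion. Write $d\mu(z)=(1-|z|^2)^{\alpha}\,dv(z)$, and note that the choice of $\alpha$ is precisely the one for which $\alpha+n+1=nq/p$, so that $\mu$ of a non-isotropic Carleson box carries the correct power. I would attack the integral through its distribution function,
\[
\int_{\Bn}|f|^q\,d\mu=q\int_0^{\infty}\lambda^{q-1}\,\mu\big(\{z\in\Bn:|f(z)|>\lambda\}\big)\,d\lambda,
\]
and control the level sets by passing to the admissible maximal function, for which we already have the $L^p$ bound of Theorem \ref{NTMT}. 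The point of using the maximal function rather than the pointwise growth estimate for $H^p$ functions is that the latter yields only a logarithmically divergent integral; the gain $q>p$ must be exploited, and it will enter geometrically through the exponent $q/p$.

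First I would record the elementary containment linking the two sides. If $|f(z)|>\lambda$ then for every $\zeta\in I(z)$ one has $f^*(\zeta)\ge|f(z)|>\lambda$, so $I(z)\subseteq E_\lambda$, where $E_\lambda=\{\zeta\in\Sn:f^*(\zeta)>\lambda\}$. Thus $\{|f|>\lambda\}\subseteq \mathcal{T}(E_\lambda)$, where $\mathcal{T}(E)=\{z\in\Bn:I(z)\subseteq E\}$ denotes the tent over $E$, and consequently $\mu(\{|f|>\lambda\})\le\mu(\mathcal{T}(E_\lambda))$. The whole estimate is then reduced to the geometric claim that
\[
\mu(\mathcal{T}(E))\lesssim \sigma(E)^{q/p}
\]
for every open set $E\subseteq\Sn$.

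The heart of the matter, and the step I expect to be the main obstacle, is this tent estimate. For a single non-isotropic ball $B=B(\zeta_0,\delta)=\{\zeta:|1-\langle\zeta,\zeta_0\rangle|<\delta\}$, with $\sigma(B)\asymp\delta^n$, the tent $\mathcal{T}(B)$ is contained in a fixed dilate of the Carleson box over $B$, and integrating the weight gives $\mu(\mathcal{T}(B))\lesssim \delta^{\alpha+n+1}=\delta^{nq/p}\asymp\sigma(B)^{q/p}$, which is the claim for balls. For a general open $E$ I would take a Whitney decomposition $E=\bigcup_i B_i$ into non-isotropic balls of bounded overlap with radii comparable to their distance to $\Sn\setminus E$; the standard correspondence between tents and Whitney balls gives $\mathcal{T}(E)\subseteq\bigcup_i \widehat{B}_i$ for suitable fixed dilates $\widehat{B}_i$, again of bounded overlap. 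Summing the ball case and using that $t\mapsto t^{q/p}$ is superadditive (since $q/p>1$ one has $\sum_i a_i^{q/p}\le(\sum_i a_i)^{q/p}$) together with $\sum_i\sigma(B_i)\lesssim\sigma(E)$ yields $\mu(\mathcal{T}(E))\lesssim\sum_i\sigma(B_i)^{q/p}\le\big(\sum_i\sigma(B_i)\big)^{q/p}\lesssim\sigma(E)^{q/p}$.

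Finally I would feed the claim back into the distribution function: with $E_\lambda=\{f^*>\lambda\}$,
\[
\int_{\Bn}|f|^q\,d\mu\lesssim q\int_0^{\infty}\lambda^{q-1}\,\sigma(E_\lambda)^{q/p}\,d\lambda .
\]
The right-hand side is, up to a constant, the $q$-th power of the Lorentz quasinorm $\|f^*\|_{L^{p,q}(\Sn)}$. Since $q>p$, the Lorentz quasinorm is non-increasing in its second index, so $\|f^*\|_{L^{p,q}}\le\|f^*\|_{L^{p,p}}=\|f^*\|_{L^p(\Sn)}$, and Theorem \ref{NTMT} gives $\|f^*\|_{L^p(\Sn)}\lesssim\|f\|_{H^p}$. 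Combining these yields $\|f\|_{A^q_{\alpha}}\lesssim\|f\|_{H^p}$, as desired. The one genuinely ball-specific input is the tent estimate; everything else is a soft distributional argument. As an alternative to the Whitney decomposition one could instead invoke the non-isotropic Carleson-measure characterisation of the embedding $H^p\hookrightarrow L^q(d\mu)$ for $p\le q$, whose box condition $\mu(\text{box of size }\delta)\lesssim\delta^{nq/p}$ was verified above; I prefer the self-contained distributional route, since it relies only on \eqref{EqG}-type geometry and Theorem \ref{NTMT}.
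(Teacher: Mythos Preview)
The paper does not prove this statement at all; it is quoted from \cite[Theorem 4.48]{ZhuBn} and used as a black box. Your proposal therefore supplies a self-contained argument where the paper offers none.

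Your argument is correct. The tent estimate $\mu(\mathcal{T}(E))\lesssim\sigma(E)^{q/p}$ via a non-isotropic Whitney decomposition is the right geometric core; the inequality $\sum_i a_i^{q/p}\le(\sum_i a_i)^{q/p}$ uses $q/p>1$ precisely where it is needed; and the final Lorentz reduction (Chebyshev on $\sigma(E_\lambda)^{(q-p)/p}$ followed by $\int_0^\infty \lambda^{p-1}\sigma(E_\lambda)\,d\lambda = p^{-1}\|f^*\|_{L^p}^p$) closes cleanly with Theorem~\ref{NTMT}. Note also that $\alpha=nq/p-(n+1)>-1$ since $q>p$, so the weight is locally integrable and the box computation $\mu(B_\delta(\zeta))\asymp\delta^{n+1+\alpha}=\delta^{nq/p}$ is legitimate.

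One caution on your closing alternative: within this paper, Duren's theorem (Theorem~\ref{Durth}) is itself proved \emph{using} the Hardy--Bergman embedding, so invoking it here would be circular in context. Your stated preference for the direct distributional route is therefore the right choice.
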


\subsection{Carleson measures and $\mathbf{BMOA}$}
For $\zeta \in \Sn$ and $\delta>0$ consider the sets
\[B_{\delta}(\zeta)=\big \{ z\in \Bn: |1-\langle z, \zeta \rangle |<\delta \big \}.\]
A positive Borel measure $\mu$ on $\Bn$ is said to be a Carleson measure if there exists a constant $C>0$ such that
\[\mu \big (B_{\delta}(\zeta)\big )\le C \delta \, ^ n\]
for all $\zeta \in \Sn$ and $\delta>0$. Obviously every Carleson measure is finite. H\"{o}rmander \cite{H} extended to several complex variables the famous Carleson measure theorem \cite{Car0, Car1} by proving that, for $0<p<\infty$, the embedding $I_ d:H^p\rightarrow L^p(\mu):=L^p(\Bn,d\mu)$ is bounded if and only if $\mu$ is a Carleson measure.\\

The space of analytic functions of bounded mean oscillation $BMOA=BMOA(\Bn)$ consists of those functions $f\in H^1$ with
\begin{displaymath}
\|f\|_{BMOA}=|f(0)|+\sup \frac{1}{\sigma(Q)}\int_{Q} \! |f(\zeta)-f_ Q|\,d\sigma(\zeta)<\infty,
\end{displaymath}
where $f_ Q=\frac{1}{\sigma(Q)}\int_{Q} f \,d\sigma$ is the mean of $f$ over $Q$ and the supremum is taken over the non-isotropic metric balls $Q=Q(\zeta,\delta)=\{ \xi\in \Sn : |1-\langle \zeta, \xi \rangle |<\delta \}$ for all $\zeta \in \Sn$ and $\delta>0$. The next result \cite[Chapter 5]{ZhuBn} gives an alternate description of $BMOA$ in terms of Carleson measures.
\begin{otherth}
Let $g\in H(\Bn)$ and consider the measure $\mu_ g$ defined by
\[d\mu_ g(z)=|Rg(z)|^2 (1-|z|^2)\,dv(z).\]
Then $g\in BMOA$ if and only if $\mu_ g$ is a Carleson measure. Moreover, if $g(0)=0$, for all $0<p<\infty$ one has
\begin{equation}\label{bmoa}
\|g\|_{BMOA}\asymp \sup_{\|f\|_{H^p}=1} \left (\int_{\Bn} |f(z)|^p \,d\mu_ g(z) \right )^{1/2}.
\end{equation}
\end{otherth}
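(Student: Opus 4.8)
Throughout I assume $g(0)=0$; this is harmless, since $Rg$, and hence $\mu_g$, is unaffected by adding a constant, and membership in $BMOA$ is a statement about oscillation. The strategy is to show that the three quantities in the statement are mutually comparable: the square of the $BMOA$ norm, the Carleson constant $\|\mu_g\|_{\mathcal C}:=\sup_{\zeta,\delta}\mu_g(B_\delta(\zeta))/\delta^n$, and the supremum on the right of \eqref{bmoa}. The last of these is the easiest to dispose of: since $\int_{\Bn}|f|^p\,d\mu_g=\|f\|_{L^p(\mu_g)}^p$, the supremum is the $p$-th power of the norm of the inclusion $H^p\hookrightarrow L^p(\mu_g)$, which by H\"ormander's Carleson embedding theorem \cite{H} is finite exactly when $\mu_g$ is Carleson, and is then comparable to $\|\mu_g\|_{\mathcal C}$. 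This already yields the asserted independence of $p$ and reduces everything to the single comparison $\|g\|_{BMOA}^2\asymp\|\mu_g\|_{\mathcal C}$ (together with the corresponding equivalence).

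For the implication $g\in BMOA\Rightarrow\mu_g$ Carleson I would argue directly. Fix a cap $Q=Q(\zeta_0,\delta)$ and its tent $T(Q)$. A standard localization of the gradient form of the Hardy--Stein/Littlewood--Paley identity (recalled in the Background, with the weight $(1-|z|^2)$) bounds the energy $\int_{T(Q)}|\nabla g|^2(1-|z|^2)\,dv$ by the boundary oscillation $\int_{\hat Q}|g-g_{\hat Q}|^2\,d\sigma$ over a fixed dilate $\hat Q$ of $Q$; a John--Nirenberg argument in the space of homogeneous type $(\Sn,|1-\langle\cdot,\cdot\rangle|,\sigma)$ shows the latter is $\lesssim\|g\|_{BMOA}^2\,\sigma(Q)\asymp\|g\|_{BMOA}^2\,\delta^n$. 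Since $|Rg|\le|\nabla_h g|\lesssim|\nabla g|$, this gives $\mu_g(T(Q))\lesssim\|g\|_{BMOA}^2\,\delta^n$, i.e. $\|\mu_g\|_{\mathcal C}\lesssim\|g\|_{BMOA}^2$. Only the harmless passage from the full gradient down to the radial derivative is used here.

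The reverse inequality $\|g\|_{BMOA}\lesssim\|\mu_g\|_{\mathcal C}^{1/2}$ is where I expect the real work. I would use the duality $BMOA=(H^1)^*$ under the pairing $\langle f,g\rangle=\lim_{r\to1}\int_{\Sn}f_r\overline{g_r}\,d\sigma$, which, up to factors bounded above and below, a polarized Littlewood--Paley identity rewrites as $\int_{\Bn}Rf(z)\,\overline{Rg(z)}\,(1-|z|^2)\,dv(z)$. It then suffices to bound this bilinear form by $\|\mu_g\|_{\mathcal C}^{1/2}\,\|f\|_{H^1}$ for all $f\in H^1$. Applying \eqref{EqG} together with the Cauchy--Schwarz inequality inside each admissible region $\Gamma(\zeta)$ gives
\[\Big|\int_{\Bn}Rf\,\overline{Rg}\,(1-|z|^2)\,dv\Big|\lesssim\int_{\Sn}Af(\zeta)\,Ag(\zeta)\,d\sigma(\zeta),\]
and by Theorem \ref{AreaT} one has $\|Af\|_{L^1(\Sn)}\asymp\|f\|_{H^1}$, so the desired estimate becomes the tent--space ($T^1$--$T^\infty$) duality inequality $\int_{\Sn}Af\cdot Ag\,d\sigma\lesssim\|Af\|_{L^1}\,\|\mu_g\|_{\mathcal C}^{1/2}$.

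The main obstacle is precisely this last inequality, the Coifman--Meyer--Stein duality between the tent spaces $T^1$ and $T^\infty$ on the ball. It cannot be reduced to H\"older's inequality, since $Ag$ need not lie in $L^\infty$ for a general $g\in BMOA$; instead it requires a stopping--time (good--$\lambda$) decomposition adapted to the non-isotropic balls $B_\delta(\zeta)$. The genuine difficulty special to $\Cn$ is that the anisotropy of $\Bn$ forces the three relevant objects---the pairing weight $(1-|z|^2)\,dv$, the area weight $(1-|z|^2)^{1-n}dv$, and the Carleson functional---to have different homogeneities, so the Euclidean tent--space machinery must be rebuilt with the weight $(1-|z|^2)^{1-n}$ dictated by Theorem \ref{AreaT}; equivalently, one must show that the radial--derivative Carleson condition already controls the complex--tangential derivatives of the holomorphic function $g$. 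Granting this, the two inequalities combine to give $\|g\|_{BMOA}^2\asymp\|\mu_g\|_{\mathcal C}$, which together with the first paragraph proves both the equivalence and \eqref{bmoa}.
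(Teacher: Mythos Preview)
The paper does not prove this statement at all: it is quoted as a background result with a bare citation to \cite[Chapter 5]{ZhuBn}, so there is no ``paper's own proof'' to compare against. Your outline is therefore being measured against the standard textbook argument, and on that count two remarks are in order.

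First, your route for the implication $\mu_g$ Carleson $\Rightarrow g\in BMOA$ is unnecessarily heavy and flirts with circularity. You invoke the duality $BMOA=(H^1)^*$, but in most developments (including Zhu's) the Fefferman duality on the ball is \emph{proved} using the very Carleson-measure characterization you are trying to establish; granting it here begs the question. Even if one takes the duality as a black box, you then reduce to the $T^1$--$T^\infty$ tent-space pairing inequality, which you correctly identify as nontrivial and leave unproved. The standard argument avoids all of this: the Carleson condition is equivalent (via testing on invariant Poisson kernels, or on the normalized kernels $f_a$ that the paper itself uses repeatedly) to $\sup_{a\in\Bn}\int_{\Bn}|Rg(z)|^2(1-|z|^2)\,\frac{(1-|a|^2)^n}{|1-\langle z,a\rangle|^{2n}}\,dv(z)<\infty$, and a M\"obius change of variable together with the Littlewood--Paley identity turns this supremum into $\sup_a\|g\circ\varphi_a-g(a)\|_{H^2}^2$, the Garsia-type norm that is equivalent to $\|g\|_{BMOA}^2$ directly. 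No duality, no tent spaces.

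Second, your first paragraph (reducing \eqref{bmoa} to the Carleson constant via H\"ormander's embedding theorem) and your forward implication via localized Littlewood--Paley plus John--Nirenberg are both fine and standard; those parts of the outline would go through.
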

We also will need the following result essentially due to Luecking \cite{Lue1}. Since  Luecking result is stated for real Hardy spaces, for convenience of the reader, and in order to offer no doubt of the validity of the result, we give a proof at the end of the paper.
\begin{otherth}\label{LT}
Let $0<s<p<\infty$ and let $\mu$ be a positive Borel measure on $\Bn$. Then the identity $I_ d:H^p\rightarrow L^s(\mu)$ is bounded, if and only if, the function defined on $\Sn$ by
\[ \widetilde{\mu}(\zeta)=\int_{\Gamma(\zeta)} (1-|z|^2)^{-n}d\mu(z) \]
belongs to $L^{p/(p-s)}(\Sn)$. Moreover, one has $\|I_d\|_{H^p\rightarrow L^s(\mu)}\asymp \|\widetilde{\mu}\|_{L^{p/(p-s)}(\Sn)}^{1/s}.$
\end{otherth}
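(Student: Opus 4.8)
The plan is to treat the two implications separately: the sufficiency is short and rests on the admissible maximal function, while the necessity is the substantial part and will be carried out by a randomization argument of Luecking type. For the sufficiency, suppose $\widetilde{\mu}\in L^{p/(p-s)}(\Sn)$ and let $f\in H^p$. Applying the Fubini-type comparison \eqref{EqG} with $\varphi=|f|^s$ and $\nu=\mu$ gives $\int_{\Bn}|f|^s\,d\mu\asymp\int_{\Sn}\big(\int_{\Gamma(\zeta)}|f(z)|^s(1-|z|^2)^{-n}\,d\mu(z)\big)\,d\sigma(\zeta)$. On $\Gamma(\zeta)$ one has $|f(z)|\le f^*(\zeta)$, so the inner integral is at most $f^*(\zeta)^s\,\widetilde{\mu}(\zeta)$. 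Hölder's inequality with the conjugate exponents $p/s$ and $p/(p-s)$ then yields $\int_{\Bn}|f|^s\,d\mu\lesssim\|f^*\|_{L^p(\Sn)}^s\,\|\widetilde{\mu}\|_{L^{p/(p-s)}(\Sn)}$, and Theorem \ref{NTMT} bounds $\|f^*\|_{L^p}$ by $\|f\|_{H^p}$. This proves boundedness together with $\|I_d\|_{H^p\to L^s(\mu)}\lesssim\|\widetilde{\mu}\|_{L^{p/(p-s)}}^{1/s}$.

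For the necessity, fix a separated lattice $\{a_j\}$ in the Bergman metric whose associated Bergman balls $D_j=D(a_j,R)$ have bounded overlap and cover $\Bn$; write $\mu_j=\mu(D_j)$ and let $I_j=I(a_j)$ be the shadows, so that $\sigma(I_j)\asymp(1-|a_j|^2)^n$ and $\widetilde{\mu}(\zeta)\asymp\sum_{j:\,\zeta\in I_j}\mu_j(1-|a_j|^2)^{-n}$. A standard discretization based on \eqref{EqG} reduces the claim $\widetilde{\mu}\in L^{p/(p-s)}$ to the discrete estimate $\sum_j\big(\mu_j(1-|a_j|^2)^{-n}\big)^{t}(1-|a_j|^2)^n\lesssim\|I_d\|^{st}$, with $t=p/(p-s)$. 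I introduce the normalized kernels $f_a(z)=(1-|a|^2)^{n/p}(1-\langle z,a\rangle)^{-2n/p}$, which satisfy $\|f_a\|_{H^p}\asymp1$ and $|f_a(z)|\asymp(1-|a|^2)^{-n/p}$ for $z\in D(a,R)$, and the randomized functions $F_\omega=\sum_j\lambda_j r_j(\omega)f_{a_j}$ with Rademacher signs $r_j$ and coefficients $\lambda_j\ge 0$ (first over a finite subfamily, then passing to the limit).

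Feeding each $F_\omega$ into the hypothesis, raising to the power $s$, averaging in $\omega$, and applying Khinchine's inequality to the left side replaces $\mathbb{E}_\omega|F_\omega|^s$ by $\big(\sum_j\lambda_j^2|f_{a_j}|^2\big)^{s/2}$; restricting the resulting integral to the $D_j$ (splitting $\{a_j\}$ into boundedly many subfamilies with pairwise disjoint balls) and using the lower bound for $|f_{a_j}|$ on $D_j$ produces $\sum_j\lambda_j^s(1-|a_j|^2)^{-ns/p}\mu_j$. On the right side I use $\mathbb{E}_\omega\|F_\omega\|_{H^p}^s\le(\mathbb{E}_\omega\|F_\omega\|_{H^p}^p)^{s/p}$ by Jensen (since $s<p$), pass to boundary values, and apply Khinchine again to obtain $\mathbb{E}_\omega\|F_\omega\|_{H^p}^p\asymp\int_{\Sn}\big(\sum_j\lambda_j^2|f_{a_j}(\zeta)|^2\big)^{p/2}\,d\sigma$. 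Combining the two sides leaves the inequality $\sum_j\lambda_j^s(1-|a_j|^2)^{-ns/p}\mu_j\lesssim\|I_d\|^s\big(\sum_j\lambda_j^p\big)^{s/p}$, after which the choice of $\lambda_j$ dictated by the duality between $\ell^{p/s}$ and $\ell^{p/(p-s)}$ (that is, $\lambda_j$ a suitable power of $\mu_j(1-|a_j|^2)^{-n}$) yields exactly the desired discrete estimate, and hence $\|\widetilde{\mu}\|_{L^{p/(p-s)}}^{1/s}\lesssim\|I_d\|$.

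The step I expect to be the main obstacle is the sharp square-function bound $\int_{\Sn}\big(\sum_j\lambda_j^2|f_{a_j}(\zeta)|^2\big)^{p/2}\,d\sigma\lesssim\sum_j\lambda_j^p$. For $p\le 2$ this is the elementary inequality $\big(\sum_j a_j\big)^{p/2}\le\sum_j a_j^{p/2}$ followed by $\|f_{a_j}\|_{H^p}\asymp1$; but for $p>2$ the naive triangle inequality in $L^{p/2}$ gives only an $\ell^2$ bound, and one must instead exploit the separation of the lattice --- that each kernel $f_{a_j}$ dominates the sum only on a region of controlled overlap --- to recover the correct $\ell^p$ estimate. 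This, together with the bookkeeping of the discretization relating $\widetilde{\mu}$ to the lattice sums, is where the genuine work lies; the area-function description of $H^p$ (Theorem \ref{AreaT} and its generalization) is the natural tool for making the square-function estimate rigorous across the full range $0<p<\infty$.
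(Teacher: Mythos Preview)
Your sufficiency argument is correct and coincides with the paper's. The necessity sketch, however, has two genuine gaps, and both come from trying to force an $\ell^p$ structure where the correct object is a \emph{tent space} $T^p(Z)$.

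First, the discretization step is false. You claim that $\|\widetilde\mu\|_{L^t}^t$ (with $t=p/(p-s)>1$) is controlled by $\sum_j\big(\mu_j(1-|a_j|^2)^{-n}\big)^t(1-|a_j|^2)^n$. Take $n=1$, a standard dyadic lattice truncated to $N$ scales, and a measure with $\mu(D_j)=(1-|a_j|^2)$ for every $j$, so that $c_j:=\mu_j(1-|a_j|^2)^{-1}=1$. Then $\widetilde\mu(\zeta)\asymp N$ for each $\zeta$, so $\|\widetilde\mu\|_{L^t}^t\asymp N^t$, while your discrete sum is $\sum_j\sigma(I_j)\asymp N$. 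The shadows $I_j$ from different scales all contain a given $\zeta$, and for $t>1$ the $t$-th power of the resulting sum is not controlled by the sum of $t$-th powers.

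Second, the square-function bound you single out as the obstacle is not merely hard for $p>2$: it is \emph{false} once $p$ is large. In dimension one, take $M\asymp(1-r)^{-1}$ equally spaced points on $\{|z|=r\}$ with all $\lambda_j=1$. A direct computation gives $\int_{\Sn}\big(\sum_j|f_{a_j}|^2\big)^{p/2}d\sigma\asymp(1-r)^{1-p/2}$, while $\sum_j\lambda_j^p\asymp(1-r)^{-1}$; for $p>4$ the left side dominates. The exponent $2n/p$ in your kernels decays too slowly. Even with the larger exponent $b>n\max(1,2/p)$ that the paper uses, the correct upper bound (Proposition~\ref{TKL}) is the tent-space norm $\|\lambda\|_{T^p(Z)}=\big(\int_{\Sn}(\sum_{a_j\in\Gamma(\zeta)}|\lambda_j|^2)^{p/2}d\sigma\big)^{1/p}$, which is not comparable to $(\sum_j\lambda_j^p)^{1/p}$.

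The paper's necessity argument avoids both problems by abandoning the fixed lattice. For each level $m$ it selects \emph{maximal} non-isotropic balls $Q$ with $\mu(\widehat Q)>2^m\sigma(Q)$, feeds the associated Khinchine-randomized kernels through the embedding, and bounds the right side by $\|\lambda\|_{T^p(Z)}$ via Proposition~\ref{TKL}. A summation-by-parts then relates both sides to $\sum_m 2^{mt}\sigma(E_m)\asymp\|\widetilde\mu\|_{L^t}^t$. The maximal-ball (stopping-time) selection is exactly what handles the multi-scale overlap that breaks your discretization; there is no way to reduce the problem to a single-scale $\ell^p$ duality.
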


\section{Proof of Theorem \ref{mt1}}\label{s3}
Consider the measure $\mu_ g$ defined by
\[ d\mu_ g(z)=|Rg(z)|^2 (1-|z|^2)\,dv(z).\]
The case $p=2$ is particularly simple. Indeed, by the Littlewood-Paley inequalities and the basic formula \eqref{form} one has
\begin{equation}\label{Eqp2}
\|J_ g f\|^2_{H^2} \asymp \int_{\Bn} |f(z)|^2\,d\mu_ g (z) \le C \|f\|_{H^2}^2
\end{equation}
if and only if $g\in BMOA$ with $\|J_ g\|\asymp \|g\|_{BMOA}$ due to \eqref{bmoa}. Now we are going to consider the other cases.
\subsection{Sufficiency}
Suppose  that $g\in BMOA$. We want to prove that
\begin{equation}\label{EqB}
\|J_ g f \|_{H^p}\le C \|g\|_{BMOA}\cdot \|f\|_{H^p}\,.
\end{equation}
By taking $f$ in the ball algebra (the algebra of all holomorphic functions in $\Bn$ continuous up to the boundary, a dense subset of $H^p$), and then using an standard approximation argument, it is enough to establish \eqref{EqB} assuming that $\|J_ g f\|_{H^p}$ is already finite.
For $p\ge 2$, we use the Hardy-Stein inequalities, the basic formula \eqref{form}, H\"{o}lder's inequality and then \eqref{bmoa} to get
\begin{displaymath}
\begin{split}
\|J_ g f\|^p_ {H^p} &\asymp \int_{\Bn} |J_ g f(z)|^{p-2} \,|R(J_ gf)(z)|^2 \,(1-|z|^2)\,dv(z)
\\
&=\int_{\Bn} |J_ g f(z)|^{p-2} \,|f(z)|^2\,|Rg(z)|^2 \,(1-|z|^2)\,dv(z)
\\
&\le \left( \int_{\Bn} |J_ gf(z)|^p d\mu_ g(z)\right )^{\frac{p-2}{p}} \left( \int_{\Bn} |f(z)|^p d\mu_ g(z)\right )^{\frac{2}{p}}
\\
&
\le C \| g\|^2_{BMOA} \cdot \|J_ g f \|_{H^p}^{p-2} \cdot \|f\|_{H^p}^2.
\end{split}
\end{displaymath}
Hence we obtain that
\[ \|J_ g f\|^2_{H^p}\le C \| g\|^2_{BMOA} \cdot \|f\|^2_{H^p},\]
that is, the operator $J_ g$ is bounded on $H^p$ with $\|J_ g\|\le C \| g\|_{BMOA}$.\\
\par
 For $0<p<2$, we use the area function description of $H^p$ (Theorem \ref{AreaT}), the basic identity \eqref{form}, H\"{o}lder's inequality, the $L^p$-boundedness of the admissible maximal function (Theorem \ref{NTMT}), \eqref{EqG} and finally \eqref{bmoa} to get
\begin{displaymath}
\begin{split}
\|J_ g f\|^p_{H^p} &\asymp \|A(J_ g f)\|_{L^p(\Sn)}^p
\\
&= \int _{\Sn} \left (\int_{\Gamma(\zeta)}\!\! |f(z)|^2 |Rg(z)|^2 (1-|z|^2)^{1-n}dv(z)\right )^{p/2} \!\!\!\!d\sigma(\zeta)
\\
&\le \int _{\Sn} (f^*(\zeta))^{\frac{(2-p)p}{2}}\left (\int_{\Gamma(\zeta)}\!\! |f(z)|^p |Rg(z)|^2 (1-|z|^2)^{1-n} dv(z)\right )^{p/2} \!\!\!\!\!d\sigma(\zeta)
\\
&\le \|f^*\|_{L^p(\Sn)}^{\frac{p(2-p)}{2}}\left (\int_{\Sn} \int_{\Gamma(\zeta)} \!\!|f(z)|^p |Rg(z)|^2 (1-|z|^2)^{1-n}dv(z) \,d\sigma(\zeta)\right)^{p/2}
\\
&\le C \|f\|_{H^p}^{\frac{p(2-p)}{2}}\left (\int_{\Bn} |f(z)|^p d\mu_ g(z) \right )^{p/2}
\le C \|g\|_{BMOA}^{p}\cdot  \|f\|^p_{H^p}.
\end{split}
\end{displaymath}
Thus $J_ g$ is bounded on $H^p$ with $\|J_ g\| \le C \| g\|_{BMOA}$.
\subsection{Necessity}
 Suppose now that $J_ g$ is bounded on $H^p$. We consider first the case $p\ge 2$. In this case, \eqref{EqG}, H\"{o}lder's inequality, the $L^p$-boundedness of the admissible maximal function and the area function characterization of $H^p$ functions (Theorem \ref{AreaT}) gives
\begin{displaymath}
\begin{split}
\int_{\Bn} |f(z)|^p d\mu_ g(z) &\asymp\int_{\Sn} \int_{\Gamma(\zeta)} \!|f(z)|^p\,|Rg(z)|^2 (1-|z|^2)^{1-n}dv(z)\, d\sigma(\zeta)
\\
& \le \int_{\Sn} \! (f^*(\zeta))^{p-2} \!\int_{\Gamma(\zeta)}\!\! |f(z)|^2\,|Rg(z)|^2 (1-|z|^2)^{1-n}dv(z) \,d\sigma(\zeta)
\\
&\le \|f^*\|_{L^p(\Sn)}^{p-2} \!\left [\int_{\Sn} \!\!\left (\int_{\Gamma(\zeta)} \!\!|R(J_ g f)(z)|^2\,(1-|z|^2)^{1-n}dv(z)\right )^{p/2} \!\!\!\!\!d\sigma(\zeta)\right ]^{2/p}
\\
& \le C \|f\|_{H^p}^{p-2} \cdot \|J_ g f\|^2_{H^p} \le C \|J_ g\|^2 \cdot \|f\|_{H^p}^{2}.
\end{split}
\end{displaymath}
Taking the supremum over all $f\in H^p$ with $\|f\|_{H^p}=1$ and using \eqref{bmoa}, this shows that $g\in BMOA$ with $\|g\|_{BMOA}\le C\|J_ g\|$.\\

\par Finally, it remains to deal with the case $0<p<2$. By considering the dilated functions $g_{\rho}(z)=g(\rho z)$, $0<\rho<1$, it is enough to prove the inequality
$ \|g\|_{BMOA}\le C \|J_ g\| $
assuming that $g$ is already in $BMOA$. Then a standard limiting argument using that $\lim_{\rho \rightarrow 1^{-}}\|J_{g_{\rho}}\|\lesssim \|J_ g\|$ will give the result. To this end, consider a function $f$ in the Hardy space $H^p$. The use of the Hardy-Stein inequalities together with \eqref{form} yields
\[  \|J_ g f\|^p_{H^p} \asymp \int_{\Bn} \!\! |J_{g} f(z)|^{p-2}\,|f(z)|^2\,d\mu_{g}(z).\]
Now, using H\"{o}lder's inequality, the previous estimate together with \eqref{bmoa} and the boundedness of $J_ g$ on $H^p$, we obtain
\begin{displaymath}
\begin{split}
\int_{\Bn} \!\!\!|f(z)|^p \,d\mu_{g}(z)&\le \left (\int_{\Bn} \!\!\!|J_{g} f(z)|^p\,d\mu_{g}(z)\right )^{1-\frac{p}{2}}
 \! \left (\int_{\Bn} \!\!\!|J_{g} f(z)|^{p-2}\,|f(z)|^2\,d\mu_{g}(z)\right )^{p/2}
 \\
&\le C \Big (\|g\|_{BMOA}^2\cdot \|J_{g}f\|^p_{H^p} \Big)^{1-\frac{p}{2}} \,\|J_{g}f\|_{H^p}^{p^2/2}
\\
& \le C \|g\|_{BMOA}^{2-p} \cdot \,\|J_{g}\|^p \,\cdot \|f\|_{H^p}^{p}.
\end{split}
\end{displaymath}
Taking the supremum over all $f$ with $\|f\|_{H^p}=1$ and using \eqref{bmoa} again gives
\[\|g\|_{BMOA}^2 \le C \,\|g\|_{BMOA}^{2-p} \cdot\,\|J_{g}\|^p. \]
This implies the desired estimate
$\|g\|_{BMOA}\le C \|J_{g}\| $
completing the proof of the Theorem.

\section{Proof of Theorem \ref{mt2}}\label{s4}

\subsection{Necessity} Assume that $J_ g:H^p\rightarrow H^q$ is bounded. The standard estimate for $H^q$ functions gives
$|R(J_ g f)(z)|\le C (1-|z|^2)^{-(n+q)/q}\,\|J_ g f\|_{H^q}.$
It follows from the fundamental identity \eqref{form} that
\begin{displaymath}
|f(z)|\,|Rg(z)| \le C (1-|z|^2)^{-(n+q)/q}\,\|J_ g\|_{H^p\rightarrow H^q}\cdot\|f\|_{H^p}.
\end{displaymath}
Taking the function $f=f_ z$ with
\[ f_ z(w)=\frac{(1-|z|^2)^{n/p}}{(1-\langle w,z\rangle)^{2n/p}}\]
that has $H^p$-norm $1$ we get
\begin{displaymath}
(1-|z|^2)^{-n/p}\,|Rg(z)| \le C (1-|z|^2)^{-(n+q)/q}\,\|J_ g\|_{H^p\rightarrow H^q}.
\end{displaymath}
That is,
$
\|g\|_{\Lambda(\alpha)}\le C \|J_ g\|_{H^p\rightarrow H^q}
$
with $\alpha=n(\frac{1}{p}-\frac{1}{q})$ as desired. This also proves part (b) since, for $\alpha>1$, the condition $(1-|z|^2)^{1-\alpha} |Rg(z)|\le C$ implies that $|Rg(z)|\rightarrow 0$ as $|z|\rightarrow 1^{-}$ and hence $g$ must be constant.

\subsection{Sufficiency}
Let $\alpha=n(\frac{1}{p}-\frac{1}{q})$, and assume that $g\in \Lambda(\alpha)$. We consider first the almost trivial case $q=2$. Here we use the Littlewood-Paley inequalities, the formula \eqref{form} and the embedding of Hardy spaces into Bergman spaces to get
\begin{displaymath}
\begin{split}
\|J_ g f \|^2_{H^2}&\asymp \int_{\Bn}\! \!|f(z)|^2 \,|Rg(z)|^2\,(1-|z|^2)\,dv(z)
\\
& \le \|g\|^2_{\Lambda(\alpha)} \int_{\Bn} \!\!|f(z)|^2 \,(1-|z|^2)^{2\alpha-1}\,dv(z)
 \le C \|g\|^2_{\Lambda(\alpha)} \cdot \|f\|_{H^p}^2,
\end{split}
\end{displaymath}
and this shows that $J_ g:H^p\rightarrow H^2$ is bounded with $\|J_ g\|_{H^p\rightarrow H^2}\le C \|g\|_{\Lambda(\alpha)}$.\\

 Next we deal with the case $q>2$. As noticed in the proof of Theorem \ref{mt1} it is enough to establish the inequality $\|J_ g f\|_{H^q}\le C \|g\|_{\Lambda(\alpha)}\cdot \|f\|_{H^p}$ assuming that $\|J_ g f\|_{H^q}$ is already finite. To this end, take a number $s>q$ with $s<\frac{(q-2)p}{(p-2)}$ if $p>2$ (this choice is possible, since for $p>2$ one has $\frac{(q-2)p}{(p-2)}>q$ due to the fact that $p<q$), and let $\gamma=ns(\frac{1}{p}-\frac{1}{q})$. By the Hardy-Stein inequalities, \eqref{form} and H\"{o}lder's inequality we have
\begin{equation}\label{EqH0}
\begin{split}
\|J_ g f\|_{H^q}^q & \asymp \int_{\Bn} \!\!|J_ g f(z)|^{q-2} |f(z)|^2\,|Rg(z)|^2 (1-|z|^2)dv(z)
\\
&\le C \,\|g\|^2_{\Lambda(\alpha)}\int_{\Bn} \!\!|J_ g f(z)|^{q-2} |f(z)|^2\, (1-|z|^2)^{2\alpha-1}\,dv(z)
\\
&\le C \,\|g\|^2_{\Lambda(\alpha)}\cdot \|J_ g f \|_{A^s_{\gamma-1}}^{q-2} \!\left ( \int_{\Bn}\!\! \!|f(z)|^{\frac{2s}{s-(q-2)}} \, (1-|z|^2)^{\beta-1} dv(z)\right )^{\frac{s-(q-2)}{s}}
\end{split}
\end{equation}
with
$$\beta= \frac{(2\alpha-\gamma)s}{s-(q-2)} +\gamma-1.$$
Since $s>q$, the embedding of Hardy spaces into Bergman spaces (Theorem \ref{Hardy-Bergman}) gives
\begin{equation}\label{Eqs1}
\|J_ g f \|_{A^s_{\gamma-1}} \le C \|J_ g f \|_{H^q}.
\end{equation}
Also, the choice made on the number $s$ ensures that
\begin{displaymath}
 s_ q:=\frac{2s}{s-(q-2)}>p.
\end{displaymath}
Since $\beta=ns_ q \Big (\frac{1}{p}-\frac{1}{s_ q} \Big )$, by making another use of Theorem \ref{Hardy-Bergman}, we have
\begin{equation}\label{Es2}
\int_{\Bn} | f(z)|^{\frac{2s}{s-(q-2)}} \, (1-|z|^2)^{\beta-1}\,dv(z)\le C \|f\|_{H^p}^{\frac{2s}{s-(q-2)}}.
\end{equation}
Putting \eqref{Eqs1} and \eqref{Es2} into \eqref{EqH0} yields
\[ \|J_ g f\|_{H^q}^q\le C \|g\|^2_{\Lambda(\alpha)}\cdot \|J_ g f\|_{H^q}^{q-2} \cdot \|f\|_{H^p}^2 ,\]
that is
\[ \|J_ g f\|_{H^q} \le C \|g\|_{\Lambda(\alpha)}\cdot \|f\|_{H^p}\]
proving that $J_ g:H^p\rightarrow H^q$ is bounded with $\|J_ g\|_{H^p\rightarrow H^q}\le C \|g\|_{\Lambda(\alpha)}$.\\

Finally, we consider the case  $0<q<2$. Let $t=(2-q)p/q$ and observe that $2-t>p$ since $p<q$. We use the area function description of Hardy spaces, \eqref{form} and H\"{o}lder's inequality to obtain
\begin{displaymath}
\begin{split}
\|J_ g f\|_{H^q}^q & \asymp \|A(J_ g f)\|^q_{L^{q}(\Sn)}
\\
&=  \int_{\Sn} \left (\int_{\Gamma(\zeta)} \!\!|f(z)|^2 \,|Rg(z)|^2\,(1-|z|^2)^{1-n} dv(z) \right )^{q/2} \!\!\!\! d\sigma(\zeta)
\\
& \le \int_{\Sn} \!\!|f^*(\zeta)|^{tq/2}\left (\int_{\Gamma(\zeta)} \!\!|f(z)|^{2-t} \,|Rg(z)|^2\,(1-|z|^2)^{1-n} dv(z) \right )^{q/2}\!\!\!\! \!d\sigma(\zeta)
\\
&\le  \|f^*\|_{L^p(\Sn)}^{(2-q)p/2} \!\left (\int_{\Sn} \int_{\Gamma(\zeta)} \!\!|f(z)|^{2-t} \,|Rg(z)|^2\,(1-|z|^2)^{1-n} dv(z) \,d\sigma(\zeta) \!\right )^{q/2} \!\!.
\end{split}
\end{displaymath}
Now, the $L^p$-boundedness of the admissible maximal function (Theorem \ref{NTMT}) gives $\|f^*\|_{L^p(\Sn)}\le C \|f\|_{H^p}$. Also, by \eqref{EqG} and the embedding of Hardy spaces into Bergman spaces (Theorem \ref{Hardy-Bergman}) we have
\begin{displaymath}
\begin{split}
\int_{\Sn} \int_{\Gamma(\zeta)} \!\!|f(z)|^{2-t} &\,|Rg(z)|^2\,(1-|z|^2)^{1-n}dv(z) d\sigma(\zeta)
\\
&\asymp \int_{\Bn} \!\!|f(z)|^{2-t} \,|Rg(z)|^2\,(1-|z|^2)\,dv(z)
\\
&\le C \|g\|^2_{\Lambda(\alpha)}\int_{\Bn} \!\!|f(z)|^{2-t} (1-|z|^2)^{2\alpha-1} dv(z)
\\
& \le C \|g\|^2_{\Lambda(\alpha)}\cdot \|f\|_{H^p}^{2-t}.
\end{split}
\end{displaymath}
All together yields
\[ \|J_ g f\|_{H^q}^q \le C \|g\|^q_{\Lambda(\alpha)}\cdot \|f\|_{H^p}^{(2-q)p/2+(2-t)q/2}=C \|g\|^q_{\Lambda(\alpha)}\cdot \|f\|^q_{H^p}\]
proving that $J_ g:H^p\rightarrow H^q$ is bounded with $\|J_ g\|_{H^p\rightarrow H^q}\le C \|g\|_{\Lambda(\alpha)}$ finishing the proof of the Theorem.

\subsection{Duren's theorem}
The proof of Theorem \ref{mt2} is closely related with Duren's theorem \cite{Du} describing the boundedness of the embedding $I_ d:H^p\rightarrow L^q(\mu)$ for $p<q$ (just look that several terms of the form $\|f\|_{L^q(\mu_ g)}$ appeared in the proof), and the original proof in one dimension used Duren's theorem. Surprisingly, the use of the embedding of Hardy spaces into Bergman spaces makes the proof of Duren's theorem almost trivial. For $s>0$ a finite positive Borel measure on $\Bn$ is called an $s$-Carleson measure if there exists a constant $C>0$ such that $\mu(B_{\delta}(\zeta))\le C \delta \,^{ns}$ for all $\zeta\in \Sn$ and $\delta>0$. It is well known (see \cite[Theorem 45]{ZZ}) that $\mu$ is an $s$-Carleson measure if and only if
\begin{equation}\label{sCM}
\sup_{a\in \Bn}\int_{\Bn} \!\!\left (\frac{1-|a|^2}{|1-\langle a,z \rangle |^2} \right )^{ns} \!\!d\mu(z)<\infty.
\end{equation}
\begin{otherth}[Duren]\label{Durth}
Let $\mu$ be a finite positive Borel measure on $\Bn$ and $0<p<q<\infty$. Then $I_ d:H^p\rightarrow L^q(\Bn,d\mu)$ is bounded if and only if $\mu$ is a $q/p$-Carleson measure.
\end{otherth}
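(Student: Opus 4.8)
The plan is to set $s=q/p>1$ and to work throughout with the integral reformulation \eqref{sCM} of the $s$-Carleson condition; both implications then reduce to facts essentially already available in the excerpt. For the necessity, I would test the boundedness of $I_d$ against the standard family
\[ f_a(w)=\frac{(1-|a|^2)^{n/p}}{(1-\langle w,a\rangle)^{2n/p}},\qquad a\in\Bn, \]
which was already used in the necessity part of Theorem \ref{mt2} and satisfies $\|f_a\|_{H^p}\asymp 1$. The key is the exact algebraic identity
\[ |f_a(z)|^q=\left(\frac{1-|a|^2}{|1-\langle a,z\rangle|^2}\right)^{nq/p}=\left(\frac{1-|a|^2}{|1-\langle a,z\rangle|^2}\right)^{ns}. \]
Hence the assumed bound $\int_{\Bn}|f_a(z)|^q\,d\mu(z)\le \|I_d\|^q\,\|f_a\|_{H^p}^q$ becomes, upon taking the supremum over $a\in\Bn$, exactly the finiteness of the supremum in \eqref{sCM}; by the cited equivalence this says that $\mu$ is a $q/p$-Carleson measure. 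No computation beyond the well-known norm estimate for $f_a$ is needed.

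For the sufficiency, I would route everything through the embedding of Hardy spaces into Bergman spaces (Theorem \ref{Hardy-Bergman}). That theorem gives $H^p\subset A^q_\alpha(\Bn)$ with $\alpha=\frac{nq}{p}-(n+1)$ and $\|f\|_{A^q_\alpha}\le C\|f\|_{H^p}$. The decisive bookkeeping observation is
\[ n+1+\alpha=\frac{nq}{p}=ns, \]
so that the $q/p$-Carleson condition $\mu(B_\delta(\zeta))\lesssim \delta^{ns}$ coincides precisely with the Carleson condition for the weighted Bergman space $A^q_\alpha$. It therefore suffices to show that such a $\mu$ satisfies $\int_{\Bn}|f|^q\,d\mu\le C\|f\|_{A^q_\alpha}^q$; combined with the embedding this yields $\int_{\Bn}|f|^q\,d\mu\le C\|f\|_{H^p}^q$, which is the desired boundedness of $I_d\colon H^p\to L^q(\mu)$.

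The Bergman-space embedding $A^q_\alpha\hookrightarrow L^q(\mu)$ is the only step that requires genuine work, and I expect it to be the main (though entirely standard) obstacle. I would prove it by the usual subharmonicity argument: for holomorphic $f$ and a fixed Bergman metric ball $D(z,r)$ one has the sub-mean-value estimate
\[ |f(z)|^q\lesssim \frac{1}{(1-|z|^2)^{n+1+\alpha}}\int_{D(z,r)}|f(w)|^q\,dv_\alpha(w), \]
since $v_\alpha(D(z,r))\asymp (1-|z|^2)^{n+1+\alpha}$. Integrating against $\mu$, applying Fubini via the symmetry of $\{(z,w):w\in D(z,r)\}$, using that $(1-|z|^2)\asymp(1-|w|^2)$ on Bergman balls, and finally invoking $\mu(D(w,r))\lesssim (1-|w|^2)^{n+1+\alpha}$ collapses the double integral to $\int_{\Bn}|f(w)|^q\,dv_\alpha(w)=\|f\|_{A^q_\alpha}^q$. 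The only remaining care is to confirm the equivalence among the non-isotropic form $\mu(B_\delta(\zeta))\lesssim\delta^{ns}$, the integral form \eqref{sCM}, and the Bergman-ball form $\mu(D(z,r))\lesssim (1-|z|^2)^{ns}$, all of which are standard and match through the identity $ns=n+1+\alpha$.
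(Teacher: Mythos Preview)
Your proposal is correct and follows essentially the same route as the paper: necessity by testing on the normalized kernels $f_a$, and sufficiency by factoring through the Hardy--Bergman embedding $H^p\subset A^q_\alpha$ with $\alpha=nq/p-(n+1)$, reducing to the Carleson embedding $A^q_\alpha\hookrightarrow L^q(\mu)$. The only cosmetic difference is in how that last embedding is established: the paper uses the pointwise reproducing estimate $|f(z)|^q\lesssim\int_{\Bn}|f(w)|^q\,|1-\langle w,z\rangle|^{-(n+1+\gamma)}\,dv_\gamma(w)$ with $\gamma=2nq/p-n-1$, then Fubini and the integral form \eqref{sCM} directly, whereas you use the sub-mean-value inequality over Bergman metric balls and the equivalent Bergman-ball form of the $s$-Carleson condition; both are standard and equally short.
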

\begin{proof}
By testing the inequality $\int |f|^q d\mu \le C \|f\|^q_{H^p}$ on the functions $f_ a(z)=(1-|a|^2)^{n/p}/(1-\langle z,a\rangle)^{2n/p}$ one gets \eqref{sCM} with $s=q/p$. Conversely, assume that $\mu$ is a $q/p$-Carleson measure. The well known inequality
\begin{displaymath}
|f(z)|^q \lesssim \int_{\Bn} \!\frac{|f(w)|^q}{|1-\langle w,z\rangle |^{n+1+\gamma}} dv_{\gamma}(w)
\end{displaymath}
with $\gamma=2nq/p-n-1>-1$ together with Fubini's theorem, condition \eqref{sCM} and the embedding of Hardy spaces into Bergman spaces gives
\begin{displaymath}
\begin{split}
\int_{\Bn} |f(z)|^q \,d\mu(z) &\le C\int_{\Bn} \!\!|f(w)|^q \left (\int_{\Bn} \!\frac{d\mu(z)}{|1-\langle w,z\rangle|^{2nq/p}} \right ) dv_{\gamma}(w)
\\
&\le C \int_{\Bn} \!\!|f(w)|^q \,(1-|w|^2)^{nq/p-n-1}dv(z)\le C \|f\|^q_{H^p}.
\end{split}
\end{displaymath}
Theorem \ref{Durth} is now proven.
\end{proof}
\section{Proof of Theorem \ref{mt3}}\label{s5}
\subsection{Sufficiency}
This is the easy case. Suppose that $g\in H^r$. The area description of functions in the Hardy space, H\"{o}lder's inequality with exponent $p/q>1$ and the $L^p$-boundedness of the admissible maximal function gives
\begin{displaymath}
\begin{split}
\|J_ g f\|^q_{H^q} &\asymp \int_{\Sn} \left (\int_{\Gamma(\zeta)} \!\!|f(z)|^2 \,|R g(z)|^2\,(1-|z|^2)^{1-n}dv(z)\right )^{q/2} \!\!\!d\sigma(\zeta)
\\
&\le \int_{\Sn} \!(f^*(\zeta) )^{q}\left (\int_{\Gamma(\zeta)} \!\! |Rg(z)|^2\,(1-|z|^2)^{1-n}\,dv(z)\right )^{q/2}\!\!\! d\sigma(\zeta)
\\
&\le \|f^*\|_{L^p(\Sn)}^q \cdot \|A(g)\|_{L^r(\Sn)}^q
\le C \|f\|_{H^p}^{q} \cdot\|g\|_{H^r}^{q},
\end{split}
\end{displaymath}
proving that $J_ g:H^p\rightarrow H^q$ is bounded with $\|J_ g\|_{H^p\rightarrow H^q}\le C \|g\|_{H^r}$.

\subsection{Necessity: first considerations}
\par The proof of the converse implication $J_ g:H^p\rightarrow H^q$ bounded implies $g\in H^r$ with $r=pq/(p-q)$ is much more difficult. Here we will deal with some easy cases as well as some remarks.
First of all, the case $q=2$ is particularly simple. Indeed, by the Littlewood-Paley inequalities, \eqref{form} and Theorem \ref{LT} we have
\begin{displaymath}
\|J_ g f\|^2_{H^ 2}\asymp \int_{\Bn} |f(z)|^2\,|Rg(z)|^2\,(1-|z|^2)\, dv(z)\le C \|f\|^2_{H^p}
\end{displaymath}
if and only if, the admissible area function $Ag$ belongs to $L^{2p/(p-2)}(\Sn)$. Moreover, one has $\|J_ g\|_{H^p\rightarrow H^2}\asymp \|Ag \|_{L^{2p/(p-2)}(\Sn)}$. Since  $r=2p/(p-2)$, an application of Theorem \ref{AreaT} gives
\begin{displaymath}
\|J_ g\|_{H^p\rightarrow H^2}\asymp \|Ag \|_{L^{2p/(p-2)}(\Sn)}\asymp \|g\|_{H^r}.
\end{displaymath}

A remark we must make here is that, as done in the proof of Theorem \ref{mt1}, it is enough to prove the inequality $\|g\|_{H^r}\le C \|J_ g\|_{H^p\rightarrow H^q}$ assuming that $g$ is already in the Hardy space $H^r$.

Taking this into account, the case $r=mp$ for some positive integer $m$ can be done as follows: $g\in H^r$ if and only if $g^m\in H^p$,
and since $g^{m+1}=(m+1)J_ g(g^m)$, then with the notation $f_ m=g^{m}$, the Hardy-Stein inequalities together with the identity \eqref{form} gives
\begin{displaymath}
\begin{split}
\|g\|^r_{H^r}&\asymp \int_{\Bn} |g(z)|^{r-2} \, |R g(z)|^2 \, (1-|z|^2) \,  dv(z)
\\
&=\int_{\Bn} |g(z)|^{mp-2-2m}\, |f_ m(z)|^{2}\,|R g(z)|^2 \,(1-|z|^2)\, dv(z)
\\
&=C\int_{\Bn} |J_g f_ m(z)|^{\frac{mp-2-2m}{m+1}} |R(J_ g f_ m)(z)|^2\, (1-|z|^2) \,dv(z).
\end{split}
\end{displaymath}
Since
 $$\frac{mp-2-2m}{m+1}=\frac{mp}{m+1}-2=q-2,$$
another use of the Hardy-Stein inequalities yields
\begin{displaymath}
\|g\|_{H^r}^r\asymp \|J_ g f_ m\|^q_{H^q} \le \|J_ g\|^q_{H^p\rightarrow H^q}\cdot \|f_ m\|^q_{H^p}=\|J_ g\|^q_{H^p\rightarrow H^q}\cdot\|g\|^{rq/p}_{H^r}.
\end{displaymath}
Since $r-rq/p=q$, this clearly implies the desired inequality
\begin{displaymath}
\|g\|_{H^r}\le C \|J_ g\|_{H^p\rightarrow H^q}.
\end{displaymath}
\mbox{}
\\
The general case can be done in a similar manner if one is able to prove the following: let $0<q<p<\infty$ and assume that $J_ g:H^p\rightarrow H^q$ is bounded. Then for all $0<q_ 1<q$ and $0<p_ 1<p$ with
$$\frac{1}{q_ 1}-\frac{1}{p_ 1}=\frac{1}{q}-\frac{1}{p}=\frac{1}{r}$$
the operator $J_ g:H^{p_ 1}\rightarrow H^{q_ 1}$ is also bounded with $\|J_ g\|_{H^{p_ 1}\rightarrow H^{q_ 1}}\le C \|J_ g\|_{H^p\rightarrow H^q}$.
Assuming the previous assertion being true, then one takes a positive integer $m$ with $p_ 1:=r/m<p$. Then, by the case considered before, one gets
\[\|g\|_{H^r} \le C \|J_ g\|_{H^{p_ 1}\rightarrow H^{q_ 1}}\le C \|J_ g\|_{H^p\rightarrow H^q}.\]
The proof of the previous claim in the one dimensional setting $n=1$ follows from the factorization of function in Hardy spaces. Indeed, given $f\in H^{p_ 1}(\mathbb{B}_ 1)$ factorize it as $f=f_ 1\cdot f_ 2$ with $f_ 1\in H^p(\mathbb{B}_ 1)$ and $f_ 2\in H^{t}(\mathbb{B}_ 1)$ such that $\|f_ 1\|_{H^p}\cdot \|f_ 2\|_{H^t}\le \|f\|_{H^{p_1}}$. Here $t$ is defined by the relation $1/p_ 1=1/p+1/t$. Then, by the area description of functions in the Hardy spaces, H\"{o}lder's inequality, and the boundedness of the admissible maximal function,
\begin{displaymath}
\begin{split}
\|J_ g f\|^{q_ 1}_{H^{q_ 1}} & \asymp \int_{\mathbb{S}_ 1} \left (\int_{\Gamma(\zeta)} \! |f_ 1(z)|^2\,|f_ 2(z)|^2\,|g'(z)|^2 \,dv(z)\right )^{q_ 1/2} \!\!\! \! d\sigma(\zeta)
\\
&\le \int_{\mathbb{S}_ 1} |f^*_ 2(\zeta)|^{q_ 1}\left (\int_{\Gamma(\zeta)} \!|(J_ g f_ 1)'(z)|^2 \,dv(z)\right )^{q_ 1/2} \!\!\! \! d\sigma(\zeta)
\\
&\le \|f_ 2^*\|^{q_ 1}_{L^{t}(\mathbb{S}_ 1)} \cdot \|J_ g f_ 1\|_{H^q}^{q_ 1}
\\
&\le \|J_ g\|^{q_ 1}_{H^p\rightarrow H^q}\cdot \|f_ 1\|_{H^p}^{q_ 1}\cdot \|f_ 2\|_{H^t}^{q_ 1} \le \|J_ g\|^{q_ 1}_{H^p\rightarrow H^q}\cdot \|f\|_{H^{p_ 1}}^{q_ 1}.
\end{split}
\end{displaymath}
When $n>1$ the factorization theorem is not at our disposal \cite{Go}, and even that there are some weak factorization results available for Hardy spaces $H^p(\Bn)$ for $0<p\le1$ (see \cite{CRW,GL}), we couldn't make effective use of them. Being unable to prove the assertion, at least directly, the proof of the necessity in Theorem \ref{mt3} will follow a different route. We mention here that, once Theorem \ref{mt3} is completely proved, then the previous claim is just a simple consequence of the theorem itself.
\subsection{Necessity: the case $\mathbf{r>2}$}
We recall that the measure $\mu_ g$ is defined as
$d\mu_ g(z)=|Rg(z)|^2 (1-|z|^2)\,dv(z).$
We need first the following simple observation.
\begin{lemma}\label{kL1}
Let $0<s<p<\infty$ and $g\in H(\Bn)$. Then
\begin{displaymath}
\int_{\Bn} |f(z)|^s d\mu_ g(z)\le C \|f\|_{H^p}^s
\end{displaymath}
if and only if $g\in H^{\frac{2p}{p-s}}$. Moreover,
$
\|I_ d\|_{H^p\rightarrow L^s(\mu _ g)}\asymp \|g\|^{2/s}_{H^{\frac{2p}{p-s}}}.
$
\end{lemma}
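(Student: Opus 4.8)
The plan is to deduce the lemma directly from Luecking's theorem (Theorem \ref{LT}) combined with the area function description of Hardy spaces (Theorem \ref{AreaT}). The measure $\mu_ g$ has been defined precisely so that the ``balayage'' $\widetilde{\mu}$ appearing in Theorem \ref{LT} becomes the square of the admissible area function of $g$, and once this identification is made the lemma is essentially immediate.

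First I would compute $\widetilde{\mu_ g}$ explicitly. By the definition of $\widetilde{\mu}$ in Theorem \ref{LT} and of $\mu_ g$,
\begin{displaymath}
\widetilde{\mu_ g}(\zeta)=\int_{\Gamma(\zeta)} (1-|z|^2)^{-n}\,d\mu_ g(z)=\int_{\Gamma(\zeta)} |Rg(z)|^2\,(1-|z|^2)^{1-n}\,dv(z)=\big(Ag(\zeta)\big)^2,
\end{displaymath}
since the factor $(1-|z|^2)$ in $d\mu_ g$ cancels one power of $(1-|z|^2)^{-n}$, leaving exactly the integrand defining the area function $Ag$. Then I apply Theorem \ref{LT} with $\mu=\mu_ g$: the embedding $I_ d:H^p\rightarrow L^s(\mu_ g)$ is bounded if and only if $\widetilde{\mu_ g}=(Ag)^2\in L^{p/(p-s)}(\Sn)$, which is the same as $Ag\in L^{2p/(p-s)}(\Sn)$. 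By Theorem \ref{AreaT} (used with the normalization $g(0)=0$, which is harmless because $\mu_ g$ depends only on $Rg$), this is equivalent to $g\in H^{2p/(p-s)}$, giving the claimed characterization.

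For the quantitative estimate I would chain together the norm comparisons in the two theorems. Theorem \ref{LT} gives $\|I_ d\|_{H^p\rightarrow L^s(\mu_ g)}\asymp \|\widetilde{\mu_ g}\|_{L^{p/(p-s)}(\Sn)}^{1/s}$, and since $\|\widetilde{\mu_ g}\|_{L^{p/(p-s)}(\Sn)}=\|(Ag)^2\|_{L^{p/(p-s)}(\Sn)}=\|Ag\|_{L^{2p/(p-s)}(\Sn)}^2$, an application of the comparison $\|g\|_{H^{2p/(p-s)}}\asymp \|Ag\|_{L^{2p/(p-s)}(\Sn)}$ from Theorem \ref{AreaT} yields
\begin{displaymath}
\|I_ d\|_{H^p\rightarrow L^s(\mu_ g)}\asymp \|Ag\|_{L^{2p/(p-s)}(\Sn)}^{2/s}\asymp \|g\|_{H^{2p/(p-s)}}^{2/s},
\end{displaymath}
which is exactly the stated estimate.

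I expect no genuine obstacle here: the content of the lemma is entirely encoded in the observation $\widetilde{\mu_ g}=(Ag)^2$, after which everything follows from the two cited results. The only thing requiring care is the bookkeeping of exponents, namely keeping straight that $(Ag)^2\in L^{p/(p-s)}$ corresponds to $Ag\in L^{2p/(p-s)}$, and that raising $\|\widetilde{\mu_ g}\|_{L^{p/(p-s)}}$ to the power $1/s$ produces the power $2/s$ on the Hardy norm of $g$.
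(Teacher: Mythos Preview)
Your proposal is correct and follows exactly the same approach as the paper, which simply states that the lemma is an immediate consequence of Theorem \ref{LT} and Theorem \ref{AreaT}. You have spelled out the details (in particular the identification $\widetilde{\mu_ g}=(Ag)^2$ and the exponent bookkeeping) that the paper leaves implicit.
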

\begin{proof}
This is an immediate consequence of Theorem \ref{LT} and Theorem \ref{AreaT}.
\end{proof}
Observe that, for $0<s<p$, the number $2p/(p-s)$ is always strictly greater than $2$, so that, for the proof of the necessity in Theorem \ref{mt3} we are only able to apply the previous Lemma in the case $r>2$. So, assume that $J_ g:H^p\rightarrow H^q$ is bounded and $r>2$. By Lemma \ref{kL1}, we have
\begin{equation}\label{HrE1}
\|g\|_{H^r}^2\asymp \sup_{\|f\|_{H^p}=1}\int_{\Bn} |f(z)|^s d\mu_ g(z)
\end{equation}
with $s=p-2(p-q)/q$.
We start first with the case $q> 2$. In that case, $s>2$  and then, by \eqref{EqG} and H\"{o}lder's inequality
\begin{displaymath}
\begin{split}
\int_{\Bn} \!\!|f(z)|^s & d\mu_ g(z)\asymp \int_{\Sn} \int_{\Gamma(\zeta)} \!\!|f(z)|^s \,|R g(z)|^2 \,(1-|z|^2)^{1-n} dv(z)\,d\sigma(\zeta)
\\
& \le \int_{\Sn} \!\!|f^*(\zeta)|^{s-2} \!\left (\int_{\Gamma(\zeta)} \!\!\!|f(z)|^2 \,|R g(z)|^2 \,(1-|z|^2)^{1-n} dv(z) \!\right ) d\sigma(\zeta)
\\
& \le \|f^*\|_{L^p(\Sn)}^{s-2}\cdot \|A(J_ g f)\|^2_{L^q(\Sn)}.
\end{split}
\end{displaymath}
Therefore, using the $L^p$-boundedness of the admissible maximal function together with Theorem \ref{AreaT} we have
\begin{displaymath}
\begin{split}
\int_{\Bn} |f(z)|^s d\mu_ g(z)
&\le C \|f\|^{s-2}_{H^p} \cdot \|J_ g f \|_{H^q}^{2} \le C \|J_ g\|_{H^{p}\rightarrow H^q}^2 \cdot \|f\|^s_{H^p}.
\end{split}
\end{displaymath}
This together with  \eqref{HrE1} gives $\|g\|_{H^r}\le C \|J_ g\|_{H^{p}\rightarrow H^q}$ finishing the proof of this case.\\

Now assume that $q<2$ and $r>2$. Then $0<s<2$. By H\"{o}lder's inequality, the Hardy-Stein inequalities and Lemma \ref{kL1},
\begin{displaymath}
\begin{split}
\|f\|^s_{L^s(\mu_ g)}&\le \left (\int_{\Bn} \!\!|J_ g f(z)|^{\frac{s(2-q)}{2-s}}\,d\mu_{g}(z) \right)^{\frac{2-s}{2}} \!\left (\int_{\Bn}\!\! |J_ g f(z)|^{q-2} |f(z)|^2 d\mu_{g}(z)\right)^{\frac{s}{2}}
\\
& \asymp \left (\int_{\Bn} \!\!|J_ g f(z)|^{\frac{qs}{p}}\,d\mu_{g}(z) \right)^{\frac{2-s}{2}}\,\|J_ g f\|_{H^q}^{\frac{qs}{2}}
\\
&\lesssim \left (\|g\|_{H^r}^{2}\cdot \|J_ g f\|_{H^q}^{qs/p}\right )^{\frac{2-s}{2}} \|J_ g \|_{H^p\rightarrow H^q}^{\frac{qs}{2}}\cdot  \|f\|_{H^p}^{\frac{qs}{2}}
\\
&\le \|g\|_{H^r}^{2-s}\cdot \|J_ g \|_{H^p\rightarrow H^q}^s \cdot \|f\|_{H^p}^s.
\end{split}
\end{displaymath}
Therefore, using \eqref{HrE1} we get
\[\|g\|_{H^r}^2 \le C \|g\|_{H^r}^{2-s}\cdot \|J_ g \|_{H^p\rightarrow H^q}^s,\]
and this implies that
$\|g\|_{H^r}\le C \|J_ g\|_{H^p\rightarrow H^q}$
as desired. This finishes the proof for $r>2$.

\subsection{Necessity: the case $\mathbf{r\le 2}$}
In order to obtain the remainder case, we must extend Lemma \ref{kL1} in order to obtain a description of $H^r$ functions in terms of Carleson type embeddings with $r\le 2$. This is what we are doing next.
\begin{lemma}\label{KL2}
Let $g\in H(\Bn)$, $0<s<p<\infty$ and $0<t<1$. Then
\begin{displaymath}
\int_{\Bn} |f(z)|^s \, |g(z)|^{2t-2}\,d\mu_ g(z)\le C \|f\|_{H^p}^s
\end{displaymath}
if and only if $g\in H^{\frac{2pt}{p-s}}$. Moreover, if $\widehat{\mu}_ g$ is the measure defined by
$d\widehat{\mu}_ g(z)=|g(z)|^{2t-2}\,d\mu_ g(z),$
then
$$\|I_ d\|_{H^p\rightarrow L^s(\widehat{\mu} _ g)}\asymp \|g\|^{2t/s}_{H^{\frac{2pt}{p-s}}}.$$
\end{lemma}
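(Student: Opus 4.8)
The plan is to follow the template of Lemma~\ref{kL1} (which is the case $t=1$ of the present statement), replacing the classical area theorem by its generalized version. First I would apply Luecking's theorem (Theorem~\ref{LT}) to the measure $\widehat{\mu}_ g$. It asserts that $I_ d:H^p\rightarrow L^s(\widehat{\mu}_ g)$ is bounded if and only if the function
\[
\widetilde{\widehat{\mu}}_ g(\zeta)\defeq\int_{\Gamma(\zeta)}(1-|z|^2)^{-n}\,d\widehat{\mu}_ g(z)=\int_{\Gamma(\zeta)}|g(z)|^{2t-2}\,|Rg(z)|^2\,(1-|z|^2)^{1-n}\,dv(z)
\]
belongs to $L^{p/(p-s)}(\Sn)$, with $\|I_ d\|_{H^p\rightarrow L^s(\widehat{\mu}_ g)}\asymp\|\widetilde{\widehat{\mu}}_ g\|_{L^{p/(p-s)}(\Sn)}^{1/s}$; here the factor $(1-|z|^2)^{-n}$ from Theorem~\ref{LT} cancels against the $(1-|z|^2)$ hidden in $d\mu_ g$. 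Thus everything reduces to identifying, in terms of a Hardy-space condition on $g$, when $\widetilde{\widehat{\mu}}_ g\in L^{p/(p-s)}(\Sn)$.

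The second step is to recognize $\widetilde{\widehat{\mu}}_ g$ as the square of a generalized (weighted) area function. Formally, writing $h=g^t$ one has $R(g^t)=t\,g^{t-1}Rg$, so that $|g|^{2t-2}|Rg|^2=t^{-2}|R(g^t)|^2$ and hence $\widetilde{\widehat{\mu}}_ g=t^{-2}(A(g^t))^2$. Since $g^t$ is in general only multivalued (and inner--outer factorization is unavailable on $\Bn$), instead of manipulating $g^t$ directly I would invoke the generalized description of Hardy spaces by area functions, Theorem~\ref{KP}. Under the standing assumption $g(0)=0$, that result gives, for any admissible exponent $m$,
\[
\int_{\Sn}\Big(\int_{\Gamma(\zeta)}|g(z)|^{2t-2}\,|Rg(z)|^2\,(1-|z|^2)^{1-n}\,dv(z)\Big)^{m/2}d\sigma(\zeta)\asymp\|g\|_{H^{tm}}^{tm},
\]
that is, membership of $\widetilde{\widehat{\mu}}_ g$ in $L^{m/2}(\Sn)$ is equivalent to $g\in H^{tm}$, with comparable norms. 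Specializing to $t=1$ this is exactly Theorem~\ref{AreaT}, as it must be.

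The third step is to apply the last display with $m=\tfrac{2p}{p-s}$, so that $m/2=\tfrac{p}{p-s}$ and $tm=\tfrac{2pt}{p-s}$. Then $\widetilde{\widehat{\mu}}_ g\in L^{p/(p-s)}(\Sn)$ if and only if $g\in H^{2pt/(p-s)}$, which is the stated characterization, and both implications come for free from the two ``if and only if'' statements. For the norm, combining the comparisons,
\[
\|I_ d\|_{H^p\rightarrow L^s(\widehat{\mu}_ g)}\asymp\|\widetilde{\widehat{\mu}}_ g\|_{L^{p/(p-s)}(\Sn)}^{1/s}=\Big(\int_{\Sn}\widetilde{\widehat{\mu}}_ g(\zeta)^{p/(p-s)}\,d\sigma(\zeta)\Big)^{(p-s)/(ps)}\asymp\|g\|_{H^{2pt/(p-s)}}^{\,2t/s},
\]
the final exponent being $\tfrac{2pt}{p-s}\cdot\tfrac{p-s}{ps}=\tfrac{2t}{s}$, exactly as asserted.

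I expect the genuine obstacle to be entirely contained in Theorem~\ref{KP}: once that generalized area theorem is granted, Lemma~\ref{KL2} is a purely formal consequence of it together with Theorem~\ref{LT}, in perfect parallel with the $t=1$ instance Lemma~\ref{kL1}. The point that Theorem~\ref{KP} resolves is precisely the one that blocks the naive argument, namely that one cannot simply replace $g$ by the holomorphic function $g^t$ when $0<t<1$ and $g$ has zeros, so the weighted quantity $|g|^{2t-2}|Rg|^2$ must be controlled intrinsically; the restriction $0<t<1$ is just the range in which Theorem~\ref{KP} is formulated. I would also keep the reduction $g(0)=0$ explicit here, since unlike in Lemma~\ref{kL1} the weight $|g|^{2t-2}$ is not invariant under adding a constant, so Theorem~\ref{KP} should be applied in precisely its $g(0)=0$ form.
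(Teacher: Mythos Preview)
Your proof is correct and follows exactly the paper's approach: the paper's proof of Lemma~\ref{KL2} is the single sentence ``a direct consequence of Theorem~\ref{LT} and Theorem~\ref{KP},'' and you have simply written out the details of that deduction. One small inaccuracy in your commentary: Theorem~\ref{KP} is stated for all $0<t<\infty$, not just $0<t<1$, so the restriction $0<t<1$ in the lemma is imposed for the subsequent application, not by Theorem~\ref{KP}.
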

\begin{proof}
The proof is a direct consequence of Theorem \ref{LT} and Theorem \ref{KP}  below, that generalizes the description of Hardy spaces in terms of the area function.
\end{proof}
\begin{theorem}\label{KP}
Let $g\in H(\Bn)$ and $0<p,t<\infty$. Then $g\in H^{pt}$ if and only if
$$ I_ {p,t}(g):=\int_{\Sn}\left (\int_{\Gamma(\zeta)} \!\!|g(z)|^{2t-2}\,|Rg(z)|^2 (1-|z|^2)^{1-n} dv(z) \right )^{p/2} \!\!\!\!d\sigma(\zeta)<\infty.$$
Moreover, if $g(0)=0$, we have
$\|g\|_{H^{pt}} \asymp I_{p,t}(g)^{1/pt}.$
\end{theorem}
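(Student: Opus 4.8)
The plan is to deduce Theorem \ref{KP} from the ordinary area--function description (Theorem \ref{AreaT}) through the substitution $G=g^t$. The motivating computation is that, where it makes sense, $RG=t\,g^{t-1}Rg$, so that
\[
|RG(z)|^2=t^2\,|g(z)|^{2t-2}\,|Rg(z)|^2 ,
\]
and hence the admissible area function of $G$ satisfies $A_\alpha G(\zeta)^2=t^2\int_{\Gamma(\zeta)}|g(z)|^{2t-2}|Rg(z)|^2(1-|z|^2)^{1-n}\,dv(z)$. Raising to the power $p/2$ and integrating over $\Sn$ gives exactly
\[
\|AG\|_{L^p(\Sn)}^p=t^p\,I_{p,t}(g).
\]
On the other hand $|G|^p=|g|^{pt}$, so $\|G\|_{H^p}^p=\|g\|_{H^{pt}}^{pt}$, and since $g(0)=0$ forces $G(0)=0$, the area--function equivalence applied to $G$ yields $\|g\|_{H^{pt}}^{pt}=\|G\|_{H^p}^p\asymp\|AG\|_{L^p(\Sn)}^p=t^p\,I_{p,t}(g)$, which is precisely $\|g\|_{H^{pt}}\asymp I_{p,t}(g)^{1/pt}$; the qualitative equivalence $g\in H^{pt}\iff I_{p,t}(g)<\infty$ then follows by the usual reduction to $g(0)=0$.

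First I would record that the global quantities above are genuinely well defined. Although $G=g^t$ is multivalued when $t\notin\Zz$ and $g$ vanishes, any two local branches differ by a unimodular constant $e^{2\pi i t k}$; hence $|G|=|g|^t$ and $|RG|=t\,|g|^{t-1}|Rg|$ are single--valued on all of $\Bn$, and the displayed identities make sense without choosing a branch. I would also note that $|G|^s=|g|^{ts}$ is plurisubharmonic for every $s>0$, since $|g|^\beta$ is plurisubharmonic for holomorphic $g$ and $\beta>0$, and that $G$ is genuinely holomorphic off the zero variety of $g$, a set of measure zero that does not affect any of the integrals.

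The main obstacle is that Theorem \ref{AreaT}, as stated, applies to single--valued holomorphic functions, so it cannot be invoked verbatim for $G=g^t$; moreover the classical device for handling such powers---factoring $g$ as a Blaschke--type product times a zero--free function and raising only the zero--free factor to the power $t$---is unavailable on $\Bn$. The remedy is to establish the equivalence $\|G\|_{H^p}\asymp\|AG\|_{L^p(\Sn)}$ directly for functions of the form $g^t$, using only the single--valued data $|G|$ and $|RG|$. Here the maximal--Hardy half is immediate: one inequality is the trivial radial domination $\|G\|_{H^p}\le\|G^*\|_{L^p(\Sn)}$, while the other is Theorem \ref{NTMT} applied to $g$ at the exponent $pt$, via the identity $G^*=(g^*)^t$. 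Thus the only substantive point is the comparison $\|AG\|_{L^p(\Sn)}\asymp\|G^*\|_{L^p(\Sn)}$, which I would obtain from the good--$\lambda$ inequalities underlying the proof of Theorem \ref{AreaT} in \cite{AB,FS}. Every step of that scheme uses only local holomorphicity of $G$ (valid off the zero variety) together with the plurisubharmonicity of the powers $|G|^s=|g|^{ts}$ (valid globally), both of which have been secured above; hence the argument carries the exponent $t$ along inertly, and specializing to $t=1$ reproduces Theorem \ref{AreaT} itself, in accordance with the remark preceding the statement.
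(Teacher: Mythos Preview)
Your reduction is natural and the formal identities you record are correct: $|G|$, $|RG|$, $G^*=(g^*)^t$, and $AG$ are all single-valued, and the displayed equalities hold. But the proof has a genuine gap at the one place where work is required. You write that the good-$\lambda$ scheme from \cite{AB,FS} goes through for $G=g^t$ because ``every step of that scheme uses only local holomorphicity of $G$ \dots\ together with the plurisubharmonicity of the powers $|G|^s$''---yet you do not open those proofs and verify the claim. This is not a formality: the direction $G^*\lesssim AG$ in the classical arguments typically reconstructs $G$ from $\nabla G$ along paths or compares $G$ with its Poisson integral, and those devices are meaningless for a multivalued $G$. One can plausibly repair them by working with $|G|=|g|^t$ and $|\nabla|G||=t|g|^{t-1}|\nabla g|$ throughout, but that substitution has to be carried out inside the sawtooth/stopping-time argument, and one must also handle the failure of $|g|^{2t}$ to be $C^2$ at zeros of $g$ when $t<1$ (via an $(|g|^2+\varepsilon)^t$ approximation). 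None of this is done; as written, the proposal reduces the theorem to an unexamined assertion about someone else's proof.

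The paper takes a different, fully self-contained route that avoids good-$\lambda$ entirely. It splits into four short steps---two directions, cases $p>2$ and $p<2$---using the Hardy--Stein identities, H\"older's inequality, and the admissible maximal theorem. The substantial step (bounding $I_{p,t}(g)$ by $\|g\|_{H^{pt}}^{pt}$ for $p>2$) dualizes against a positive $\varphi\in L^{p/(p-2)}(\Sn)$ and applies the invariant Green's formula to $u_\varphi\cdot|g|^{2t}$; this is exactly where the identity $\widetilde\Delta(|g|^{2t})=t^2|g|^{2t-2}|\widetilde\nabla g|^2$ enters, and it makes explicit the Green's-formula computation that your good-$\lambda$ appeal leaves implicit. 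If you pursue your approach rigorously, you will end up writing down essentially these computations; the paper simply does so directly, and in a way that also reproves Theorem~\ref{AreaT} (the case $t=1$) rather than assuming it.
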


Before going to the proof of Theorem \ref{KP}, now we use Lemma \ref{KL2} to obtain the necessity in Theorem \ref{mt3} for $r\le 2$. Since always one has $q<r$ it is possible to choose $0<t<1$ with $q<2t<r$. Let $s=p-2t\frac{(p-q)}{q}$. Then $0<s<p$ and also $0<s<2$. By Lemma \ref{KL2},
\begin{equation}\label{ET0}
\|g\|_{H^r}^{2t}\asymp \sup_{\|f\|_{H^p}=1}\int_{\Bn} \!|f(z)|^s |g(z)|^{2t-2}\,d\mu_ g(z).
\end{equation}
For $f\in H^p$, by H\"{o}lder's inequality,  we have
\begin{equation}\label{EqT1}
\begin{split}
\int_{\Bn} \!\!\!|f(z)|^s |g(z)|^{2t-2} d\mu_ g(z)\le& \left (\int_{\Bn} \!\!\!|J_ g f(z)|^{\frac{s(2-q)}{2-s}}\,|g(z)|^{(2t-2)\cdot \frac{2}{2-s}}\,d\mu_{g}(z) \! \right)^{\frac{2-s}{2}}
\\
&\times \left (\int_{\Bn} \!|J_ g f(z)|^{q-2} |f(z)|^2 d\mu_{g}(z)\right)^{s/2}.
\end{split}
\end{equation}
Observe that $\frac{s(2-q)}{2-s}<q$ if and only if $s<q$ and this holds if $q<2t$. Let
$$s_ q=\frac{s(2-q)}{2-s}; \qquad t_ s=\frac{2t-s}{2-s}.$$
We have $0<s_ q<q$ and $0<t_ s<1$. Then, by Lemma \ref{KL2}
\begin{displaymath}
\begin{split}
\int_{\Bn}\!\!|J_ g f(z)|^{\frac{s(2-q)}{2-s}}\,|g(z)|^{(2t-2)\cdot \frac{2}{2-s}}\,d\mu_{g}(z)& =\int_{\Bn}\!\!|J_ g f(z)|^{s_ q}\,|g(z)|^{2t_ s-2}\,d\mu_{g}(z)
\\
&\le C \|g\|^{2t_ s}_{H^{\gamma}}\cdot \|J_ g f\|_{H^q}^{s_ q},
\end{split}
\end{displaymath}
with
\begin{displaymath}
\gamma=\frac{2q \cdot t_ s}{q-s_ q}=\frac{pq}{p-q}=r.
\end{displaymath}
Putting this into \eqref{EqT1} and using the Hardy-Stein inequalities, we obtain
\begin{displaymath}
\begin{split}
\int_{\Bn} \!\!|f(z)|^s |g(z)|^{2t-2}\,d\mu_ g(z) &\lesssim \big ( \|g\|^{2t_ s}_{H^{r}}\cdot \|J_ g f\|_{H^q}^{s_ q} \big )^{1-s/2}\cdot \|J_ g f\|_{H^q}^{qs/2}
\\
& =\|g\|^{2t-s}_{H^r} \cdot \|J_ g f \|_{H^q}^{s}
\\
& \le \|g\|^{2t-s}_{H^r} \cdot \|J_ g\|_{H^p\rightarrow H^q}^s \cdot\| f \|_{H^p}^{s}.
\end{split}
\end{displaymath}
Taking the supremum over all $f$ in $H^p$ with $\|f\|_{H^p}=1$ and using \eqref{ET0} we get
\begin{displaymath}
\|g\|_{H^r}^{2t} \lesssim \|g\|^{2t-s}_{H^r} \cdot \|J_ g\|_{H^p\rightarrow H^q}^s
\end{displaymath}
that clearly implies the inequality
$ \|g\|_{H^r} \le C \|J_ g\|_{H^p\rightarrow H^q}$
finishing the proof of the Theorem.

\subsection{Proof of Theorem \ref{KP}}
The case $t=1$ is just Theorem \ref{AreaT} but our proof also includes this case.
The case $p=2$ is obvious due to \eqref{EqG} and the Hardy-Stein inequalities. To deal with the other cases, as done before, using standard approximation arguments it is enough to establish the corresponding inequalities assuming that both $\|g\|_{H^{pt}}$ and $I_{p,t}(g)$ are finite.
\subsubsection{Step $1$} For $p>2$ we prove that
\begin{equation}\label{ES1}
\|g\|_{H^{pt}}^{pt} \le C \,I_{p,t}(g).
\end{equation}
By the Hardy-Stein inequalities, \eqref{EqG}, H\"{o}lder's inequality and the $L^p$  boundedness of the admissible maximal function, we have
\begin{displaymath}
\begin{split}
\| g\|^{pt}_{H^{pt}}
&\asymp \int_{\Sn} \left (\int_{\Gamma(\zeta)} \!\!|g(z)|^{pt-2}|R g(z)|^2\,(1-|z|^2)^{1-n} dv(z)\right ) d\sigma(\zeta)
\\
&\le  \int_{\Sn} \!\!|g^*(\zeta)|^{pt-2t}\left (\int_{\Gamma(\zeta)}\!\! |g(z)|^{2t-2}|R g(z)|^2\,(1-|z|^2)^{1-n} dv(z)\right )\! d\sigma(\zeta)
\\
& \le \|g^*\|_{L^{pt}(\Sn)}^{t(p-2)}\cdot I_{p,t}(g)^{2/p}
 \le C\,\|g\|^{pt-2t}_{H^{pt}}\cdot I_{p,t}(g)^{2/p},
\end{split}
\end{displaymath}
and this clearly gives the inequality \eqref{ES1}.
\subsubsection{Step $2$}
We show that, for $0<p<2$, one has
\begin{displaymath}
I_{p,t} (g)\le C \,\|g\|_{H^{pt}}^{pt}.
\end{displaymath}
To prove the inequality, apply H\"{o}lder's inequality, Theorem \ref{NTMT}, \eqref{EqG} and the Hardy-Stein inequalities to obtain
\begin{displaymath}
\begin{split}
I_ {p,t}(g)&=\int_{\Sn}\left (\int_{\Gamma(\zeta)} \!\!|g(z)|^{2t-2}\,|Rg(z)|^2 \,(1-|z|^2)^{1-n} dv(z) \right )^{p/2}\!\!\!\! d\sigma(\zeta)
\\
& \le \int_{\Sn} \!\!|g^*(\zeta)|^{\frac{(2-p)tp}{2}}\left (\int_{\Gamma(\zeta)} \! \!|g(z)|^{pt-2}\,|Rg(z)|^2 \,(1-|z|^2)^{1-n} dv(z) \right )^{p/2} \!\!\!\!\!d\sigma(\zeta)
\\
& \le \|g^*\|_{L^{pt}(\Sn)}^{pt(1-p/2)} \left (\int_{\Sn}\int_{\Gamma(\zeta)} \!\!|g(z)|^{pt-2}\,|Rg(z)|^2 \,(1-|z|^2)^{1-n} dv(z) \,d\sigma(\zeta)\right )^{p/2}
\\
&\le C \|g\|^{pt}_{H^{pt}}.
\end{split}
\end{displaymath}
Notice that the same method shows that, if $u_{\varphi}$ is the invariant Poisson integral of a function $\varphi \in L^{pt}(\Sn)$, and $p<2$ with $pt>1$ then one has
\begin{equation}\label{EqH1}
\int_{\Sn} \!\!\left (\!\int_{\Gamma(\zeta)}\! \!\!|u_{\varphi}(z)|^{2t-2} |\widetilde{\nabla} u_{\varphi}(z)|^2 \, d\lambda_ n (z) \!\right )^{p/2} \!\!\!\!\!\!d\sigma(\zeta)\le C \,\|\varphi\|^{pt}_{L^{pt}(\Sn)}.
\end{equation}
Indeed, we also have the Hardy-Stein inequalities for $u_{\varphi}$ and the boundedness of the admissible maximal function $\|u^*_{\varphi}\|_{L^p(\Sn)}\le C \|\varphi\|_{L^p(\Sn)}$ for $1<p<\infty$ (see \cite[Theorem 5.4.10]{Rud}).
\subsubsection{Step $3$} For $p>2$ we establish the inequality
\begin{equation*}
I_{p,t} (g)\le C \,\|g\|_{H^{pt}}^{pt}.
\end{equation*}
We begin with the case $p\ge 4$. The case $2<p<4$ will be deduced later from this case. Since
$ |Rg(z)|\le |\nabla g(z)|\le (1-|z|^2)^{-1}\,|\widetilde {\nabla}g(z)|$ (see \cite[Lemma 2.14]{ZhuBn}),
it is enough to show that
\begin{equation}\label{ES3-I}
J_{p,t} (g)\le C \,\|g\|_{H^{pt}}^{pt},
\end{equation}
where
$$J_ {p,t}(g):= \int_{\Sn} \left (\int_{\Gamma(\zeta)} |g(z)|^{2t-2} \,|\widetilde{\nabla} g (z)|^2\,d\lambda_ n(z)\right )^{p/2}\!\!\!\!d\sigma(\zeta).$$
 We follow an argument in \cite[p. 282]{Stein}, but with the use of the invariant Green's formula instead of the classical one. By duality, we have
\begin{equation}\label{DR}
J_{p,t}(g) ^{2/p} \asymp \sup \int_{\Sn} \!\!\left (\int_{\Gamma(\zeta)}\!\!\!|g(z)|^{2t-2}\,|\widetilde{\nabla} g(z)|^2 \, d\lambda_ n(z) \!\right ) \varphi(\zeta)\,d\sigma(\zeta),
\end{equation}
where the supremum runs over all positive functions $\varphi$ in $L^{p/(p-2)}(\Sn)$ with $\|\varphi\|_{L^{p/(p-2)}(\Sn)}=1$.  Since $1-|z|^2$ is
comparable to $|1-\langle z,\zeta\rangle |$ for $z$ in $\Gamma(\zeta)$, we have
\begin{equation}\label{DR2}
\begin{split}
\int_{\Sn} &\left (\int_{\Gamma(\zeta)} \!\!|g(z)|^{2t-2}\,|\widetilde{\nabla} g(z)|^2\, d\lambda_ n(z) \right )\,\varphi(\zeta)\, d\sigma (\zeta)
\\
& \asymp \int_{\Sn} \left (\int_{\Gamma(\zeta)} \!\!|g(z)|^{2t-2}\,|\widetilde{\nabla} g(z)|^2 \frac{(1-|z|^2)^{2n}}{|1-\langle z,\zeta\rangle |^{2n}}\,d\lambda_ n(z) \right )\,\varphi(\zeta)\, d\sigma(\zeta)
\\
& \le \int_{\Bn} \!\! |g(z)|^{2t-2}\,|\widetilde{\nabla} g(z)|^2 \,u_{\varphi}(z)\,(1-|z|^2)^n\,d\lambda_ n(z).
\end{split}
\end{equation}
where $u_{\varphi}$ denotes the invariant Poisson integral of the function $\varphi$. An elementary calculation shows that
\[ \widetilde{\Delta} (|g|^{2t})(z)=4t^2 |g(z)|^{2t-2}\, |\widetilde{\nabla}_ h g(z)|^2=t^2|g(z)|^{2t-2}\, |\widetilde{\nabla} g(z)|^2,\qquad z\in \Bn\]
 where $\widetilde{\Delta}$ is the invariant Laplace operator. If $t<1$ the last identity holds at the points $z\in \Bn$ with $g(z)\neq 0$. Therefore, the last integral in \eqref{DR2} is equal to
\begin{displaymath}
 \frac{1}{t^2} \int_{\Bn} \widetilde{\Delta} (|g|^{2t})(z) \,u_{\varphi}(z)\,(1-|z|^2)^n\,d\lambda_ n(z).
\end{displaymath}
 Using that $u_{\varphi}$ is $\mathcal{M}$-harmonic on $\Bn$ and the identity $\widetilde{\Delta} (U\cdot V)=U \widetilde{\Delta} V+V \widetilde{\Delta} U +2 \langle \widetilde{\nabla} U, \widetilde{\nabla} V \rangle _{\mathbb{R}}$, where $\langle \cdot, \cdot \rangle_{\mathbb{R}}$ denotes the inner product in $\mathbb{R}^{2n}$,  we see that the previous integral is dominated by
$$I_ 1(g,\varphi)+I_ 2(g,\varphi)$$
with
\begin{displaymath}
I_ 1(g,\varphi)=\int_{\Bn} \widetilde{\Delta} (u_{\varphi} \, |g|^{2t})(z)\,(1-|z|^2)^n\,d\lambda_ n(z)
\end{displaymath}
and
\begin{displaymath}
I_ 2(g,\varphi)=\int_{\Bn} |\widetilde{\nabla} (|g|^{2t})(z)|\cdot |\widetilde{\nabla} u_{\varphi}(z)|\,(1-|z|^2)^n\,d\lambda_ n(z).
\end{displaymath}
Since $(1-|z|^2)^n \lesssim  G(z)$, where $G$ is the invariant Green's function, the term $I_ 1(g,\varphi)$ is estimated using the invariant Green's formula and H\"{o}lder's inequality to obtain
\begin{equation}\label{Term1}
I_ 1(g,\varphi) \le C \int_{\Sn} |g(\zeta)|^{2t}\,\varphi(\zeta)\,d\sigma(\zeta)
\le C\,\|g\|_{H^{pt}}^{2t} \cdot \|\varphi\|_{L^{p/(p-2)}(\Sn)}.
\end{equation}
Notice that there is no problem with the use of the invariant Green's formula if $t\ge 1$ because in that case, the function $|g|^{2t}$ is of class $C^2$. When $0<t<1$ one uses standard approximation arguments, for example replacing $|g|^{2t}$ by $(|g|^2+\varepsilon)^t$ and then letting $\varepsilon \rightarrow 0$.

In order to estimate the second term $I_ 2(g,\varphi)$, first we use that $\big |\widetilde{\nabla}(|g|^{2t})(z)\big |\asymp |g(z)|^{2t-1} |\widetilde{\nabla} g(z)|$ to get
\begin{displaymath}
I_ 2(g,\varphi)\asymp \int_{\Bn} |g(z)|^{2t-1}\,|\widetilde{\nabla} g(z)|\cdot |\widetilde{\nabla} u_{\varphi}(z)|\,(1-|z|^2)^n\,d\lambda_ n (z).
\end{displaymath}
If $p=4$, an application of Cauchy-Schwarz together with the Hardy-Stein inequalities yield
\begin{displaymath}
\begin{split}
I_ 2(g,\varphi) &\lesssim \left (\int_{\Bn} \!\!\!|g(z)|^{4t-2}\,|\widetilde{\nabla} g(z)|^2 \,(1-|z|^2)^n\,d\lambda_ n(z) \!\right )^{\frac{1}{2}}
 \!\left (\int_{\Bn} \!\!\!|\widetilde{\nabla} u_{\varphi} (z)|^2 \,(1-|z|^2)^n\,d\lambda_ n(z) \!\right )^{\frac{1}{2}}
 \\
& \lesssim \|g\|_{H^{4t}}^{2t}\cdot \|\varphi\|_{L^2(\Sn)}.
 \end{split}
\end{displaymath}
Bearing in mind \eqref{DR}, \eqref{DR2} and \eqref{Term1}, this gives $J_{4,t}(g)^{1/2}\le C \|g\|_{H^{4t}}^{2t}$ proving the desired result when $p=4$.\\

If $p>4$ then $1<\frac{p}{p-2}<2$ and it has been already proved in \eqref{EqH1} that
\begin{equation}\label{EqS3-0}
\int_{\Sn} \!\!\left (\!\int_{\Gamma(\zeta)}\! \! |\widetilde{\nabla} u_{\varphi}(z)|^2 \, d\lambda_ n(z) \!\right )^{\!\!\frac{p}{2(p-2)}} \!\!\!\!\!\!\!\!d\sigma(\zeta)\le C \,\|\varphi\|^{p/(p-2)}_{L^{p/(p-2)}(\Sn)}.
\end{equation}
 By \eqref{EqG} and H\"{o}lder's inequality we have
\begin{displaymath}
\begin{split}
I_ 2(g,\varphi) &\lesssim \int_{\Sn} \left (\int_{\Gamma(\zeta)}\!\! |g(z)|^{2t-1} |\widetilde{\nabla} g(z)|\,|\widetilde{\nabla} u_{\varphi}(z)| \, d\lambda_ n(z)\right ) d\sigma(\zeta)
\\
&\le \int_{\Sn} \!\!\!|g^*(\zeta)|^t\left (\int_{\Gamma(\zeta)}\!\! |g(z)|^{t-1} |\widetilde{\nabla} g(z)|\,|\widetilde{\nabla} u_{\varphi}(z)|\, d\lambda_ n(z)\right ) d\sigma(\zeta)
\\
&\le \|g^*\|^t_{L^{pt}(\Sn)}\cdot I_ 3(g,\varphi)^{(p-1)/p},
\end{split}
\end{displaymath}
with
\begin{displaymath}
I_ 3(g,\varphi)=\int_{\Sn} \left (\int_{\Gamma(\zeta)}\!\! |g(z)|^{t-1} |\widetilde{\nabla} g(z)|\,|\widetilde{\nabla} u_{\varphi}(z)| \, d\lambda_ n(z)\right )^{p/(p-1)} \!\!\!\!\!\!\!\!d\sigma(\zeta).
\end{displaymath}
An application of Theorem \ref{NTMT} gives
\begin{equation}\label{ES3-1}
I_ 2(g,\varphi) \le C \|g\|^t_{H^{pt}}\cdot I_ 3(g,\varphi)^{(p-1)/p}.
\end{equation}
Now, applying Cauchy-Schwarz inequality together with another use of H\"{o}lder's inequality (now with exponent $p-1>1$) and the inequality \eqref{EqS3-0} it follows that
\begin{displaymath}
I_ 3(g,\varphi) \le J_{p,t}(g)^{1/(p-1)}\cdot \|\varphi\|_{L^{p/(p-2)}(\Sn)}^{p/(p-1)}.
\end{displaymath}
Putting this inequality into \eqref{ES3-1} we get
\[ I_ 2(g,\varphi) \le C \|g\|^t_{H^{pt}}\cdot J_{p,t}(g) ^{1/p}\cdot \|\varphi\|_{L^{p/(p-2)}(\Sn)}. \]
Taking into account \eqref{DR}, \eqref{DR2} and \eqref{Term1}, this gives
\begin{displaymath}
J_{p,t}(g)^{2/p}\lesssim \|g\|^{2t}_{H^{pt}}+\|g\|^{t}_{H^{pt}}\cdot J_{p,t}(g)^{1/p},
\end{displaymath}
but, since $p>2$, we have already proved in Step $1$ that $$\|g\|_{H^{pt}}^t \lesssim I_{p,t}(g)^{1/p}\le J_{p,t}(g)^{1/p}.$$
Therefore we finally obtain
\[ J_{p,t}(g)^{2/p}\le C \|g\|^{t}_{H^{pt}}\cdot J_{p,t}(g)^{1/p} ,\]
and this clearly implies the inequality \eqref{ES3-I} finishing the proof of that case.\\

It remains to deal with the case $2<p<4$. Since $2p>4$, the previous case gives
\[ I_{2p,t/2}(g) \le C \|g\|_{H^{pt}}^{pt}.\]
Then, by Cauchy-Schwarz inequality and Theorem \ref{NTMT}, we have
\begin{displaymath}
\begin{split}
I_{p,t}(g) &\le \int_{\Sn}\!\!|g^*(\zeta)|^{tp/2}  \left (\int_{\Gamma(\zeta)}\!\!|g(z)|^{t-2}\,|Rg(z)|^{2} \,(1-|z|^2)^{1-n} dv(z)
 \right )^{p/2} \!\!\!\!d\sigma(z)
\\
& \le \|g^*\|_{L^{pt}(\Sn)}^{pt/2}\cdot I_{2p,t/2}(g)^{1/2}
 \le C \|g\|_{H^{pt}}^{pt}.
\end{split}
\end{displaymath}
\subsubsection{Step $4$}
Finally, for $0<p<2$, we show that
\begin{equation}\label{ES4}
 \|g\|^{pt}_{H^{pt}}\le C \,I_{p,t}(g).
 \end{equation}
 By the Hardy-Stein inequalities together with \eqref{EqG}, we have
\begin{displaymath}
\|g\|^{pt}_{H^{pt}} \asymp \int_{\Sn} \left (\int_{\Gamma(\zeta)} \!\!|g(z)|^{pt-2} |Rg(z)|^2 \,(1-|z|^2)^{1-n} dv(z)\right )d\sigma(\zeta).
\end{displaymath}
 Then apply H\"{o}lder's inequality with exponent $4/p$ and Cauchy-Schwarz  to get
\begin{displaymath}
\begin{split}
\|g\|^{pt}_{H^{pt}}
&\lesssim \int_{\Sn} \!\!\!\left (\int_{\Gamma(\zeta)} \!\!|g(z)|^{2t-2} \, \frac{|R g(z)|^2 \,dv(z)}{(1-|z|^2)^{n-1}} \!\right )^{\frac{p}{4}} \!\left (\int_{\Gamma(\zeta)} \!\!|g(z)|^{\frac{2tp}{4-p}-2}  \, \frac{|R g(z)|^2 \,dv(z)}{(1-|z|^2)^{n-1}} \!\right )^{\frac{4-p}{4}} \!\!\!\!\!d\sigma(\zeta)
\\
&\le I_{p,t}(g)^{1/2}\cdot I_{4-p,\frac{tp}{4-p}}(g)^{1/2}.
\end{split}
\end{displaymath}
By the case already proved (Step $3$) we have
$$I_{4-p,\frac{tp}{4-p}}(g) \le C \|g\|_{H^{pt}}^{pt},$$
and this clearly establishes \eqref{ES4} finishing the proof of the Theorem.

\subsubsection{Remarks} The same argument shows that, for a function $g\in H(\Bn)$ and $0<p<\infty$, one has $g\in H^{pt}$ if and only if
\begin{displaymath}
\int_{\Sn} \left (\int_{\Gamma(\zeta)} \!\!|g(z)|^{2t-2} |\widetilde{\nabla} g(z)|^2 \, d\lambda_ n(z) \!\right )^{p/2} \!\!\!\!\!\!d\sigma(\zeta)<\infty.
\end{displaymath}
Also, the same method shows that, if $u_{\varphi}$ denotes the invariant Poisson integral of $\varphi$ and $p,t$ are positive numbers with $1<pt<\infty$, then $\varphi$ belongs to $L^{pt}(\Sn)$ if and only if
\begin{displaymath}
\int_{\Sn} \left (\int_{\Gamma(\zeta)} \!\!|u_{\varphi}(z)|^{2t-2} |\widetilde{\nabla} u_{\varphi}(z)|^2 \, d\lambda_ n(z) \!\right )^{p/2} \!\!\!\!\!\!d\sigma(\zeta)<\infty.
\end{displaymath}

\section{Compactness and membership in Schatten classes}\label{s6}
\subsection{Compactness}
It is well known that a linear operator $T:H^p\rightarrow H^q$ is compact if and only if $\|Tf_ k\|_{H^q}\rightarrow 0$ for every bounded sequence $\{f_ k\}$ in $H^p$ converging to zero uniformly on compact subsets of $\Bn$.
With all that has been done in the previous sections, it is now routine the obtention of the corresponding descriptions about the compactness of the integration operator $J_ g:H^p\rightarrow H^q$. We need first the following easy result.
\begin{lemma}\label{LComp1}
Let $0<p,q<\infty$. If $\alpha=n\big (\frac{1}{p}-\frac{1}{q}\big )<1$ then $J_ p:H^p\rightarrow H^q$ is compact for any holomorphic polynomial $p(z)$.
\end{lemma}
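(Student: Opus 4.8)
The plan is to verify the sequential compactness criterion recalled just above: I fix a sequence $\{f_k\}$ with $\sup_k\|f_k\|_{H^p}\le M$ and $f_k\to0$ uniformly on compact subsets of $\Bn$, and I show that $\|J_pf_k\|_{H^q}\to0$. The guiding point is that, since $p$ is a polynomial, $Rp$ extends continuously to $\overline{\Bn}$ and in particular $\|Rp\|_\infty<\infty$, while the hypothesis $\alpha=n(\frac1p-\frac1q)<1$ forces $(1-|z|^2)^{1-\alpha}\to0$ as $|z|\to1^-$; together these say that $p$ is a ``vanishing'' symbol, which is exactly what should produce compactness. The engine is the area function description (Theorem \ref{AreaT}) together with the identity \eqref{form}, which give
\[
\|J_pf_k\|_{H^q}^{q}\asymp\int_{\Sn}\Big(\int_{\Gamma(\zeta)}|f_k(z)|^{2}\,|Rp(z)|^{2}\,(1-|z|^{2})^{1-n}\,dv(z)\Big)^{q/2}d\sigma(\zeta).
\]
I then fix $0<\rho<1$ and split each cone $\Gamma(\zeta)$ (equivalently $\Bn$) into its part in $\{|z|\le\rho\}$ and its part in $\{|z|>\rho\}$, so that, by the quasi-subadditivity of $t\mapsto t^{q/2}$, one has $\|J_pf_k\|_{H^q}^{q}\lesssim \mathrm{I}_k(\rho)+\mathrm{II}_k(\rho)$, the ``core'' and ``tail'' integrals over $\Sn$.

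For the core term I keep $\rho$ fixed. On $\{|z|\le\rho\}$ one has $|f_k(z)|\le\delta_k:=\sup_{|z|\le\rho}|f_k(z)|$, where $\delta_k\to0$ by uniform convergence on the compact set $\{|z|\le\rho\}$; moreover $|Rp|\le\|Rp\|_\infty$ and $(1-|z|^2)^{1-n}\le(1-\rho^2)^{1-n}$ there. Hence the inner cone integral defining $\mathrm{I}_k(\rho)$ is at most $\delta_k^2\,\|Rp\|_\infty^2\,(1-\rho^2)^{1-n}\,v(\Bn)=:C(\rho)\,\delta_k^2$, a bound uniform in $\zeta$, so that $\mathrm{I}_k(\rho)\le C(\rho)^{q/2}\delta_k^{q}\,\sigma(\Sn)\to0$ as $k\to\infty$.

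The tail term is the crux, and it is here that $\alpha<1$ is used. The idea is to rerun the matching sufficiency estimate with the global symbol norm replaced by the localized mass of $p$ on $\{|z|>\rho\}$, which tends to $0$. When $p<q$ (so $0<\alpha<1$) I use the pointwise bound $|Rp(z)|\le\eta(\rho)(1-|z|^2)^{\alpha-1}$ on $\{|z|>\rho\}$, with $\eta(\rho)=\sup_{|z|>\rho}(1-|z|^2)^{1-\alpha}|Rp(z)|\to0$ because $Rp$ is bounded and $1-\alpha>0$. When $p\ge q$ I instead keep $|Rp|^2$ and dominate $\mathrm{II}_k(\rho)^{1/q}\le\|f_k^*\cdot A_{>\rho}p\|_{L^q(\Sn)}$, where $A_{>\rho}p(\zeta)^2=\int_{\Gamma(\zeta)\cap\{|z|>\rho\}}|Rp|^2(1-|z|^2)^{1-n}dv$; Hölder with the conjugate pair $(p/q,r/q)$ together with Theorem \ref{NTMT} gives $\mathrm{II}_k(\rho)\lesssim M^q\|A_{>\rho}p\|_{L^r(\Sn)}^q$, and $\|A_{>\rho}p\|_{L^r}\to0$ as $\rho\to1^-$: by dominated convergence from $Ap\in L^r(\Sn)$ when $r<\infty$, and (when $p=q$, $r=\infty$) by bounding $|Rp|$ by $\|Rp\|_\infty$ and noting that $\int_{\Gamma(\zeta)\cap\{|z|>\rho\}}(1-|z|^2)^{1-n}dv$ is independent of $\zeta$ and vanishes as $\rho\to1^-$. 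In every regime whose sufficiency proof runs through the area function directly (for instance $q\le2$, or $p\ge q$) this restriction is literal and yields $\mathrm{II}_k(\rho)\lesssim\varepsilon(\rho)\,M^q$ with $\varepsilon(\rho)\to0$.

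The one genuine obstacle is the regime $q>2$ with $p<q$, where the sufficiency argument of Theorem \ref{mt2} passes through the \emph{global} Hardy--Stein identity and does not localize cleanly. There I would bound the tail, after inserting $|Rp|^2\le\eta(\rho)^2(1-|z|^2)^{2\alpha-2}$, by
\[
\int_{\{|z|>\rho\}}|J_pf_k|^{q-2}|f_k|^{2}|Rp|^{2}(1-|z|^{2})\,dv\le\eta(\rho)^{2}\int_{\Bn}|J_pf_k|^{q-2}|f_k|^{2}(1-|z|^{2})^{2\alpha-1}\,dv\lesssim\eta(\rho)^{2}\|J_pf_k\|_{H^q}^{q-2}\|f_k\|_{H^p}^{2},
\]
the last step being exactly the estimate \eqref{EqH0}, and then close with a self-improving argument: since $J_p$ is bounded, $\|J_pf_k\|_{H^q}\le K$ a priori, whence $\limsup_k\|J_pf_k\|_{H^q}^{q}\lesssim \eta(\rho)^{2}K^{q-2}M^{2}$, which tends to $0$ as $\rho\to1^-$. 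Combining core and tail finishes the proof: given $\eps>0$, I first choose $\rho$ so close to $1$ that the tail is below $\eps$ uniformly in $k$, and then take $k$ large so that the core is below $\eps$; hence $\|J_pf_k\|_{H^q}\to0$ and $J_p:H^p\to H^q$ is compact.
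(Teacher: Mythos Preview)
Your argument is correct in substance, but it is considerably more involved than the paper's. Both proofs start the same way---the area function identity and a core/tail split at radius $\rho$---and the core is handled identically. Where you diverge is in the tail. You split into several regimes ($p\ge q$; $p<q$ with $q\le 2$; $p<q$ with $q>2$) and in each one you re-run the relevant boundedness machinery from Theorems~\ref{mt1}--\ref{mt3} with a localized symbol norm ($\eta(\rho)$ or $\|A_{>\rho}p\|_{L^r}$). The paper instead treats the tail uniformly with a one-line trick: bound $|Rp|$ by $\|Rp\|_\infty$, and for the inner $|f_k|^2$ factor use either $|f_k|^2\le (f_k^*)^2$ (when $q\le p$) or the pointwise growth estimate $|f_k(z)|\lesssim(1-|z|^2)^{-n/p}\|f_k\|_{H^p}$ to peel off a factor $(1-|z|^2)^{-2\alpha}$ (when $p<q$). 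What remains is the elementary radial integral $\int_{\Gamma(\zeta)\cap\{|z|>\rho\}}(1-|z|^2)^{1-n-2\alpha_+}\,dv\asymp(1-\rho)^{2(1-\alpha_+)}$, which vanishes precisely because $\alpha<1$. This covers all $p,q$ at once without Hardy--Stein, without the self-improving step, and without invoking the full sufficiency proofs of the main theorems.

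One presentational wobble worth fixing: in the regime $q>2$, $p<q$ you switch tacitly from the area-function split to a Hardy--Stein split. The displayed quantity $\int_{\{|z|>\rho\}}|J_pf_k|^{q-2}|f_k|^2|Rp|^2(1-|z|^2)\,dv$ is \emph{not} a bound on the $\mathrm{II}_k(\rho)$ you defined via cones; it is the tail of the Hardy--Stein integral for $\|J_pf_k\|_{H^q}^q$. That alternative split is fine, but then you must also redo the core in the Hardy--Stein sense (trivial, since $|J_pf_k(z)|\lesssim\delta_k$ on $\{|z|\le\rho\}$ as well), which you do not mention. Cleaner still is simply to observe that the paper's pointwise-estimate argument handles this case directly inside your original area-function $\mathrm{II}_k(\rho)$, making the Hardy--Stein detour and the self-improving step unnecessary.
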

\begin{proof}
Let $\{f_ k\}$ be a bounded sequence in $H^p$ converging to zero uniformly on compact subsets of $\Bn$, and fix $0<\varepsilon<1$. Take $0<r<1$ with $1-r<\varepsilon$  and then choose $k_ 0$ such that $\sup_{|z|\le r}|f_ k(z)|<\varepsilon$ for all $k\ge k_ 0$. Then, using Theorem \ref{AreaT}, one easily gets
\begin{displaymath}
\|J_ p f_ k\|_{H^q}^q \asymp \|A(J_ p f_ k)\|_{L^q(\Sn)}^q\lesssim \varepsilon^q \|p\|_{H^q}^q+\|Rp\|_{\infty}^q\cdot I(f_ k)
\end{displaymath}
with
\begin{displaymath}
I(f_ k)=\int_{\Sn} \left (\int_{\Gamma(\zeta)\cap \{|z|>r\}}\! |f_ k(z)|^2\,(1-|z|^2)^{1-n}dv(z)\right )^{q/2}\!\!\! d\sigma(\zeta).
\end{displaymath}
If $\alpha\le 0$, that is, when $q\le p$, by H\"{o}lder's inequality and Theorem \ref{NTMT}, we have
\begin{displaymath}
\begin{split}
I(f_ k)&\le \int_{\Sn} \!\!|f^*_ k(\zeta)|^q \left ( \int_{\Gamma(\zeta)\cap \{|z|>r\}} \!\!\!(1-|z|^2)^{1-n}dv(z) \!\right )^{q/2} \!\!\!\!\!d\sigma(\zeta)
\\
& \asymp (1-r)^q\|f^*_ k\|^q_{L^q(\Sn)} \le C \,\varepsilon^q \|f_ k\|^q_{H^p}\le C\,\varepsilon^q.
\end{split}
\end{displaymath}
If $0<\alpha<1$, then $q>p$ and the standard estimate for $H^p$ functions \cite [Theorem 4.17]{ZhuBn} and Theorem \ref{NTMT} gives
\begin{displaymath}
\begin{split}
I(f_ k)
& \le \|f_ k\|_{H^p}^{q-p} \int_{\Sn} |f^*_ k(\zeta)|^p\left (\int_{\Gamma(\zeta)\cap \{|z|>r\}} \!\! (1-|z|^2)^{-2\alpha+1-n}dv(z)\right )^{q/2}\!\!\!\!
d\sigma(\zeta)
\\
& \le C (1-r)^{q(1-\alpha)} \|f^*_ k\|^p_{L^p(\Sn)}\le C \,\varepsilon^{q(1-\alpha)} \|f_ k\|^p_{H^p} \le C \,\varepsilon^{q(1-\alpha)}.
\end{split}
\end{displaymath}
This proves that $J_ p:H^p\rightarrow H^q$ is compact when $\alpha<1$.
\end{proof}
Now we are ready to state and prove the results on the compactness of $J_ g:H^p\rightarrow H^q$.
Recall that the space of holomorphic functions of vanishing mean oscillation $VMOA$ is the closure of the holomorphic polynomials in $BMOA$.
\begin{theorem}\label{Tc1}
Let $0<p<\infty$ and $g\in H(\Bn)$. Then
$J_ g$ is compact on $H^p$ if and only if $g\in VMOA$.
\end{theorem}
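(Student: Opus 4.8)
The plan is to deduce both implications from the apparatus built for Theorem~\ref{mt1}, from Lemma~\ref{LComp1}, and from the description of $VMOA$ via vanishing Carleson measures.

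For the sufficiency I would argue by approximation. If $g\in VMOA$, then by definition there are holomorphic polynomials $p_k$ with $\|g-p_k\|_{BMOA}\to 0$. Since the symbol enters $J$ linearly, $J_g-J_{p_k}=J_{g-p_k}$, and Theorem~\ref{mt1} gives $\|J_g-J_{p_k}\|=\|J_{g-p_k}\|\asymp\|g-p_k\|_{BMOA}\to 0$. Each $J_{p_k}$ is compact on $H^p$ by Lemma~\ref{LComp1} (taken with $q=p$, so that $\alpha=0<1$). As the compact operators form a closed subspace in the operator norm, $J_g$, being the operator-norm limit of the compact operators $J_{p_k}$, is itself compact.

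For the necessity, assume $J_g$ is compact on $H^p$; in particular it is bounded, so $g\in BMOA$ by Theorem~\ref{mt1}, and it remains to promote this to $g\in VMOA$. I would test compactness against the functions $f_a(w)=(1-|a|^2)^{n/p}/(1-\langle w,a\rangle)^{2n/p}$, $a\in\Bn$, which satisfy $\|f_a\|_{H^p}\asymp 1$ and converge to $0$ uniformly on compact subsets of $\Bn$ as $|a|\to 1^-$. Compactness then forces $\|J_g f_a\|_{H^p}\to 0$. Running the necessity estimates of Theorem~\ref{mt1} with $f=f_a$ --- namely $\int_{\Bn}|f_a|^p\,d\mu_g\le C\|f_a\|_{H^p}^{p-2}\|J_g f_a\|_{H^p}^2$ when $p\ge 2$, and $\int_{\Bn}|f_a|^p\,d\mu_g\le C\|g\|_{BMOA}^{2-p}\|J_g f_a\|_{H^p}^p$ when $0<p<2$ --- the right-hand sides tend to $0$, whence
\[
\int_{\Bn}\frac{(1-|a|^2)^n}{|1-\langle z,a\rangle|^{2n}}\,d\mu_g(z)=\int_{\Bn}|f_a(z)|^p\,d\mu_g(z)\longrightarrow 0\qquad(|a|\to 1^-).
\]

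To finish I would invoke the fact that the vanishing of this Berezin-type average of $\mu_g$ as $|a|\to 1^-$ is exactly the assertion that $\mu_g$ is a \emph{vanishing} Carleson measure (the little-oh counterpart of the equivalence \eqref{sCM} with $s=1$), and that $\mu_g$ is a vanishing Carleson measure if and only if $g\in VMOA$ (the vanishing counterpart of the $BMOA$--Carleson measure theorem from \cite[Chapter~5]{ZhuBn}). This yields $g\in VMOA$. The one genuinely non-routine point is this last equivalence: one must know, or check, that $\lim_{|a|\to 1^-}\int_{\Bn}(1-|a|^2)^n|1-\langle z,a\rangle|^{-2n}\,d\mu_g(z)=0$ characterizes vanishing Carleson measures and that these characterize $VMOA$; the rest is a direct transcription of the Theorem~\ref{mt1} estimates to the explicit family $\{f_a\}$.
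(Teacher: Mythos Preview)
Your proof is correct and follows essentially the same route as the paper: the sufficiency via Lemma~\ref{LComp1} and the norm estimate of Theorem~\ref{mt1} is verbatim, and the necessity uses the very same pair of inequalities extracted from the proof of Theorem~\ref{mt1}. The only variation is cosmetic: the paper feeds an \emph{arbitrary} bounded sequence $\{f_k\}$ (converging to $0$ uniformly on compacta) into those inequalities and concludes directly that the embedding $H^p\hookrightarrow L^p(\mu_g)$ is compact, whereas you specialize to the kernels $f_a$ and then invoke the Berezin-transform characterization of vanishing Carleson measures; both endpoint equivalences come from the same place in \cite[Chapter~5]{ZhuBn}.
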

\begin{proof}
If $g$ is in  $VMOA$ then there are holomorphic polynomials $p_ k$ with $\|g-p_ k\|_{BMOA}\rightarrow 0$. By Lemma \ref{LComp1}, the operator $J_{p_ k}$ is compact on $H^p$. From the estimate obtained in Theorem \ref{mt1} it follows that
\[ \|J_ g-J_{p_ k}\|=\|J_{g-p_ k}\|\le C \|g-p_ k\|_{BMOA}\rightarrow 0.\]
Hence $J_ g$ can be approximated by compact operators in the operator norm proving that $J_ g$ is compact.

Conversely, suppose that $J_ g$ is compact on $H^p$. We want to show that $g$ belongs to $VMOA$ or, equivalently, that $\|f_ k\|_{L^p(\mu_ g)}\rightarrow 0$ for any sequence $\{f_ k\}$ of functions in the Hardy space $H^p$ with $\sup \|f_ k\|_{H^p}\le C$ converging to zero uniformly on compact subsets of $\Bn$ \cite[Chapter 5]{ZhuBn}. Since $J_ g$ is compact, we have $\lim \|J_ g f_ k\|_{H^p}=0$. If $p=2$ the result is obvious from \eqref{Eqp2}. For the other values of $p$, notice that in the course of the proof of Theorem \ref{mt1} the following inequalities had been proved
 $$\|f_ k\|^p_{L^p(\mu_ g)} \le C \|f_ k\|_{H^p}^{p-2}\cdot \|J_ g f_ k\|_{H^p}^2\quad \textrm{ if }\quad p>2,$$
 and
 $$\|f_ k\|^p_{L^p(\mu_ g)} \le C \|g\|_{BMOA}^{2-p}\cdot \|J_ g f_ k\|_{H^p}^p\quad \textrm{ if }\quad 0<p<2.$$
 Since $\|f_ k\|_{H^p}\le C$ and $\|J_ g f_ k\|_{H^p}\rightarrow 0$ this shows that $\lim \|f_ k\|_{L^p(\mu_ g)}=0$ proving that $g$ is in $VMOA$.
\end{proof}
Now, for $0<\alpha<1$, we need to introduce the little Lipschitz type space $\lambda(\alpha)$ that consists of those functions $g\in H(\Bn)$ with
$$\lim_{|z|\rightarrow 1^{-}} (1-|z|^2)^{1-\alpha} \,|Rg(z)|=0.$$
\begin{theorem}\label{Tc2}
Let $0<p<q<\infty$, $g\in H(\Bn)$ and $\alpha=n\big (\frac{1}{p}-\frac{1}{q}\big )$. If $\alpha<1$, then the operator
$J_ g:H^p\rightarrow H^q$ is compact if and only if $g\in \lambda(\alpha)$. If $\alpha=1$, then $J_ g:H^p\rightarrow H^q$ is compact if and only if $J_ g\equiv 0$.
\end{theorem}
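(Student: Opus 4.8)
The plan is to follow the template already used for Theorem~\ref{Tc1}, splitting into the generic case $\alpha<1$ and the critical case $\alpha=1$, and throughout using the standard criterion that $J_g:H^p\to H^q$ is compact if and only if $\|J_g f_k\|_{H^q}\to 0$ for every sequence $\{f_k\}$ bounded in $H^p$ and tending to $0$ uniformly on compact subsets of $\Bn$.

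For the sufficiency when $\alpha<1$ I would argue exactly as in Theorem~\ref{Tc1}. The key input is that $\lambda(\alpha)$ is the closure of the holomorphic polynomials in the norm of $\Lambda(\alpha)$; this is standard for little Lipschitz type spaces and can be obtained, for instance, by checking that the dilates $g_r(z)=g(rz)$ converge to $g$ in $\Lambda(\alpha)$ whenever $g\in\lambda(\alpha)$ and then approximating each $g_r$, which is smooth up to the boundary, by its Taylor polynomials. Granting this, if $g\in\lambda(\alpha)$ we pick polynomials $p_k$ with $\|g-p_k\|_{\Lambda(\alpha)}\to 0$. Since $\alpha<1$, Lemma~\ref{LComp1} shows each $J_{p_k}:H^p\to H^q$ is compact, while the norm estimate of Theorem~\ref{mt2}(a) gives $\|J_g-J_{p_k}\|_{H^p\to H^q}=\|J_{g-p_k}\|_{H^p\to H^q}\le C\,\|g-p_k\|_{\Lambda(\alpha)}\to 0$. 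Thus $J_g$ is an operator-norm limit of compact operators, hence compact.

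For the necessity when $\alpha<1$ I would test $J_g$ on the usual family
\[ f_z(w)=\frac{(1-|z|^2)^{n/p}}{(1-\langle w,z\rangle)^{2n/p}}, \]
which satisfies $\|f_z\|_{H^p}\asymp 1$ and tends to $0$ uniformly on compact subsets of $\Bn$ as $|z|\to 1^-$; hence compactness of $J_g$ forces $\|J_g f_z\|_{H^q}\to 0$. Repeating the pointwise computation from the necessity part of Theorem~\ref{mt2} with $f=f_z$, but retaining $\|J_g f_z\|_{H^q}$ in place of $\|J_g\|_{H^p\to H^q}$, the standard estimate for the radial derivative of an $H^q$ function together with the basic identity \eqref{form} and the value $f_z(z)=(1-|z|^2)^{-n/p}$ yields
\[ (1-|z|^2)^{1-\alpha}\,|Rg(z)|\le C\,\|J_g f_z\|_{H^q}. \]
Letting $|z|\to 1^-$ gives $\lim_{|z|\to 1^-}(1-|z|^2)^{1-\alpha}|Rg(z)|=0$, that is, $g\in\lambda(\alpha)$.

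The critical case $\alpha=1$ is where the argument genuinely changes, and this is the main point. Sufficiency is trivial, since $J_g\equiv 0$ is compact. For necessity the same test-function estimate is available, but now the weight $(1-|z|^2)^{1-\alpha}$ equals $1$, so compactness yields the stronger conclusion $\lim_{|z|\to 1^-}|Rg(z)|=0$. The crucial observation is that $Rg$ is itself holomorphic on $\Bn$, and this limit says precisely that $Rg$ extends continuously to $\overline{\Bn}$ with boundary values identically $0$; by the maximum modulus principle $Rg\equiv 0$, so $g$ is constant and $J_g\equiv 0$. I expect the main obstacle to be exactly recognizing that at $\alpha=1$ the naive little-oh condition collapses to triviality through this maximum-principle argument, rather than describing a genuine function space as it does for $\alpha<1$; the remainder is bookkeeping parallel to Theorems~\ref{mt2} and~\ref{Tc1}.
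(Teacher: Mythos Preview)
Your proposal is correct and follows essentially the same approach as the paper: approximate $g\in\lambda(\alpha)$ by polynomials and invoke Lemma~\ref{LComp1} with the norm estimate of Theorem~\ref{mt2}(a) for sufficiency, and for necessity use the test functions $f_z$ together with the pointwise estimate from the proof of Theorem~\ref{mt2} to obtain $(1-|z|^2)^{1-\alpha}|Rg(z)|\le C\|J_g f_z\|_{H^q}\to 0$, with the $\alpha=1$ case handled by observing that $|Rg(z)|\to 0$ forces $g$ constant. Your write-up is slightly more explicit than the paper's (spelling out the density argument and the maximum-principle step), but there is no substantive difference.
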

\begin{proof}
One implication is a consequence of Lemma \ref{LComp1} together with the inequality $\|J_ g\|_{H^p\rightarrow H^q}\le C \|g\|_{\Lambda(\alpha)}$ obtained in Theorem \ref{mt2}, since $\lambda(\alpha)$ is the closure of the holomorphic polynomials in $\Lambda(\alpha)$ \cite[Chapter 7]{ZhuBn}. The other implication follows from the estimate
\begin{displaymath}
|f(z)|\,|Rg(z)| \le C (1-|z|^2)^{-(n+q)/q}\,\|J_ g f\|_{ H^q}
\end{displaymath}
obtained in the proof of Theorem \ref{mt2}. Indeed, if $\{a_ k\}$ is any sequence of points in $\Bn$ with $|a_ k|\rightarrow 1$, consider the functions
\[f_ k(z)=\frac{(1-|a_ k|^2)^{n/p}}{(1-\langle z,a_ k \rangle )^{2n/p}},\qquad z\in \Bn.\]
The functions $f_ k$ are unit vectors on $H^p$ converging to zero uniformly on compact subsets of $\Bn$. Therefore, if $J_ g:H^p\rightarrow H^q$ is compact, then $\|J_ g f_ k\|_{H^q}\rightarrow 0$ and it follows from the previous estimate that
\begin{displaymath}
\begin{split}
(1-|a_ k|^2)^{1-\alpha} |R g(a_ k)|&=(1-|a_ k|^2)^{(n+q)/q} |f_ k(a_ k)|\,|Rg(a_ k)|
\\
&\le C \|J_ g f_ k \|_{H^q} \rightarrow 0,
\end{split}
\end{displaymath}
proving that $g$ belongs to $\lambda(\alpha)$ for $\alpha<1$. If $\alpha=1$ we have proved that $|Rg(z)|\rightarrow 0$ as $|z|\rightarrow 1^{-}$, and hence $g$ must be constant.
\end{proof}
\begin{theorem}\label{Tc3}
Let $0<q<p<\infty$ and $g\in H(\Bn)$. Then
$J_ g:H^p\rightarrow H^q$ is compact if and only if it is bounded, if and only if $g\in H^r$ with $r=pq/(p-q)$.
\end{theorem}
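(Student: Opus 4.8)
The plan is as follows. Compactness trivially implies boundedness, and by Theorem \ref{mt3} boundedness is equivalent to $g\in H^r$ with $r=pq/(p-q)$. Hence the only thing left to prove is that $g\in H^r$ forces $J_g:H^p\to H^q$ to be compact; in other words, in the sub-diagonal range $q<p$ the ``big oh'' boundedness condition automatically upgrades to the ``little oh'' compactness. I would use the sequential description of compactness recalled at the beginning of this section: it suffices to show that $\|J_g f_k\|_{H^q}\to 0$ whenever $\{f_k\}$ is bounded in $H^p$ and converges to $0$ uniformly on compact subsets of $\Bn$.

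The starting point is the same area-function identity used in the sufficiency part of Theorem \ref{mt3}, namely
\[
\|J_g f_k\|_{H^q}^q \asymp \int_{\Sn}\left(\int_{\Gamma(\zeta)} |f_k(z)|^2\,|Rg(z)|^2\,(1-|z|^2)^{1-n}\,dv(z)\right)^{q/2} d\sigma(\zeta).
\]
The idea is to split the inner integral over $\Gamma(\zeta)$ according to $\{|z|\le \rho\}$ and $\{|z|>\rho\}$ for a radius $\rho$ to be chosen close to $1$, and to separate the two resulting pieces using the elementary inequality $(a+b)^{q/2}\lesssim a^{q/2}+b^{q/2}$ valid for $a,b\ge 0$. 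For the tail piece $\{|z|>\rho\}$ I would bound $|f_k(z)|\le f_k^*(\zeta)$ on $\Gamma(\zeta)$, factor this out, and apply H\"older's inequality on $\Sn$ with the conjugate exponents $p/q$ and $r/q$ (these are conjugate precisely because $\tfrac{q}{p}+\tfrac{q}{r}=1$, which holds since $\tfrac1r=\tfrac1q-\tfrac1p$). Together with Theorem \ref{NTMT} this yields a bound of the form $C\,\|f_k\|_{H^p}^q\,\|A_\rho g\|_{L^r(\Sn)}^q$, where $A_\rho g$ denotes the truncated area function with integration restricted to $\Gamma(\zeta)\cap\{|z|>\rho\}$.

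Since $g\in H^r$, Theorem \ref{AreaT} gives $Ag\in L^r(\Sn)$; and because $A_\rho g\le Ag$ with $A_\rho g(\zeta)\to 0$ pointwise as $\rho\to 1^-$, dominated convergence shows $\|A_\rho g\|_{L^r(\Sn)}\to 0$. Thus the tail piece is smaller than any prescribed $\varepsilon$ uniformly in $k$ once $\rho$ is fixed close enough to $1$. For the central piece $\{|z|\le\rho\}$, with $\rho$ now frozen, I would use $|f_k(z)|\le \sup_{|z|\le\rho}|f_k(z)|$ and pull this factor out, leaving the finite quantity $\|Ag\|_{L^q(\Sn)}^q$ (finite because $H^r\subset H^q$ for $r>q$); the uniform convergence of $f_k$ to $0$ on the compact set $\{|z|\le\rho\}$ then drives this factor to $0$ as $k\to\infty$. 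Combining the two estimates, first choosing $\rho$ to control the tail and then letting $k\to\infty$, gives $\limsup_k\|J_g f_k\|_{H^q}=0$, which proves compactness.

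The main obstacle, and the only point where the hypothesis $q<p$ is genuinely used, is the tail estimate: one must convert the mere membership $Ag\in L^r$ into the decay $\|A_\rho g\|_{L^r(\Sn)}\to 0$, and then close the argument \emph{uniformly in $k$} through the H\"older pairing of exponents $p/q$ and $r/q$, which is available exactly in this range. Once $\rho$ is fixed, the central piece is routine.
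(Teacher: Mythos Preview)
Your argument is correct. The reduction via Theorem~\ref{mt3} is the same as in the paper, and your direct verification of the sequential compactness criterion works as written: the H\"older pairing $(p/q,r/q)$ is exactly right, the dominated-convergence step $\|A_\rho g\|_{L^r}\to 0$ is valid since $A_\rho g\le Ag\in L^r(\Sn)$ pointwise, and the central piece is handled by the inclusion $H^r\subset H^q$.

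The paper takes a different, more modular route. Rather than splitting radially and invoking dominated convergence, it observes that holomorphic polynomials are dense in $H^r$, that $J_p$ is compact for any polynomial $p$ (Lemma~\ref{LComp1}), and that the norm estimate $\|J_{g-p}\|_{H^p\to H^q}\le C\|g-p\|_{H^r}$ from Theorem~\ref{mt3} then exhibits $J_g$ as an operator-norm limit of compact operators. Your approach is essentially an unpacked, ``hands-on'' version of this: the decay $\|A_\rho g\|_{L^r}\to 0$ plays the same role as approximating $g$ by a polynomial (namely, cutting off the boundary behavior of $g$), and your central-piece estimate is precisely the content of Lemma~\ref{LComp1} specialized to the case $q<p$. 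The paper's argument is shorter and reuses already-established pieces; yours is self-contained and makes explicit the mechanism---the $L^r$-integrability of $Ag$ furnishing a dominating function---by which the sub-diagonal condition $q<p$ automatically upgrades boundedness to compactness.
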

\begin{proof}
Due to Theorem \ref{mt3} it only remains to prove that $J_ g:H^p\rightarrow H^q$ is compact whenever $g$ is in $H^r$. As before, since the holomorphic polynomials are dense in $H^r$, this follows from the inequality $\|J_ g\|_{H^{p}\rightarrow H^q}\le C \|g\|_{H^r}$  in Theorem \ref{mt3} and Lemma \ref{LComp1}.
\end{proof}
\subsection{Schatten classes}
For $0<p<\infty$,  a compact operator $T$ acting on a separable Hilbert space $H$ belongs to the Schatten class $S_ p:=S_ p(H)$ if its sequence of singular numbers belongs to the sequence space $\ell^p$ (the singular numbers are the square roots of the eigenvalues of the positive operator $T^*T$, where $T^*$ is the Hilbert adjoint of $T$). We refer to \cite[Chapter 1]{Zhu} for a brief account on Schatten classes.

Recall that $H^2$ is a reproducing kernel Hilbert space with the reproducing kernel function given by
\[K_ z(w)=\frac{1}{(1-\langle w,z\rangle )^{n}},\quad z,w\in \Bn\]
with norm $\|K_ z\|_{H^2}=\sqrt{K_ z(z)}=(1-|z|^2)^{-n/2}$.   The normalized kernel functions are denoted by $k_ z=K_ z/\|K_ z\|_{H^2}$.  We also need to introduce some ``derivatives" of the kernel functions. For $z,w\in \Bn$ and $t>0$, define
\[K_ z^{t}(w)=\frac{1}{(1-\langle w,z\rangle )^{n+t}}\]
and let $k^{t}_ z$ denote its normalization, that is, $k^{t}_ z=K^{t}_ z/\|K^{t}_ z\|_{H^2}$. Notice that
$K_ z^{t}(w)=\overline{R^{-1,t}K_ w (z)},$
where $R^{-1,t}$ is the unique continuous linear operator on $H(\Bn)$ satisfying
\begin{displaymath}
R^{-1,t}\left (\frac{1}{(1-\langle z,w\rangle )^{n}}\right )=\frac{1}{(1-\langle z,w\rangle )^{n+t}}
\end{displaymath}
for all $w\in \Bn$ (see \cite[Section 1.4]{Zhu}). The operator $R^{-1,t}$ is invertible and its inverse is denoted by $R_{-1,t}$. In particular, since $f
(z)=\langle f,K_ z\rangle_{H^2}$ whenever $f\in H^2$, one has
\begin{equation}\label{eq1}
 R^{-1,t}f (z)=\langle f,K^{t}_ z\rangle_{H^2},\qquad f\in H^2(\Bn).
\end{equation}

In order to describe the membership of the integration operator $J_ g$ in the Schatten ideals $S_ p(H^2)$ we also need the following result that can be of independent interest. A related result in one dimension appears in \cite{Msm}.
\begin{lemma}\label{KLS}
Let $T:H^2(\Bn)\rightarrow H^2(\Bn)$ be a positive operator. For $t> 0$ set
\[\widetilde{T^t}(z)=\langle Tk^{t}_ z,k^{t}_ z\rangle_{H^2},\quad z\in \Bn.\]
\begin{enumerate}
\item[(a)]  Let $0<p\le 1$. If $\widetilde{T^t} \in L^p(\Bn,d\lambda_ n)$ then $T$ is in $S_ p(H^2)$.

\item[(b)] Let $p\ge 1$. If $T$ is in $S_ p(H^2)$ then $\widetilde{T^t} \in L^p(\Bn,d\lambda_ n)$.
\end{enumerate}
\end{lemma}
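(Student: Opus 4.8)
The two statements hinge on a single identity relating the derivative kernels to the fractional operator $R^{-1,t}$. Since $\|K^{t}_ z\|_{H^2}=(1-|z|^2)^{-(n+t)/2}$, formula \eqref{eq1} gives $\langle h,k^{t}_ z\rangle_{H^2}=(1-|z|^2)^{(n+t)/2}R^{-1,t}h(z)$, so that
\[\int_{\Bn}|\langle h,k^{t}_ z\rangle_{H^2}|^2\,d\lambda_ n(z)=\int_{\Bn}|R^{-1,t}h(z)|^2(1-|z|^2)^{t-1}\,dv(z)\asymp\|h\|_{H^2}^2,\]
the comparison being the standard fact (see \cite{Zhu}) that $R^{-1,t}$ maps $H^2$ isomorphically onto the weighted Bergman space $A^2_{t-1}$ (recall $t>0$). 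I will use the upper estimate above, its discrete counterpart (the frame property of $\{k^{t}_{a_ j}\}$ over a sufficiently dense lattice), and the trivial bound $|\widetilde{T^t}(z)|\le\|T\|$.

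For part (b) I would interpolate, exploiting that $T\mapsto\widetilde{T^t}$ is linear. The endpoint $p=\infty$ is the estimate $\|\widetilde{T^t}\|_{L^\infty}\le\|T\|$. For $p=1$, fixing an orthonormal basis $\{e_ m\}$ of $H^2$ and setting $h_ m=T^{1/2}e_ m$, one has $\widetilde{T^t}(z)=\|T^{1/2}k^{t}_ z\|^2=\sum_ m|\langle h_ m,k^{t}_ z\rangle_{H^2}|^2$, and integrating term by term and applying the upper bound yields
\[\int_{\Bn}\widetilde{T^t}\,d\lambda_ n\le C\sum_ m\|h_ m\|_{H^2}^2=C\sum_ m\langle Te_ m,e_ m\rangle_{H^2}=C\,\mathrm{tr}\,T=C\,\|T\|_{S_ 1}.\]
Complex interpolation, $[S_ 1,\cb(H^2)]_{\theta}=S_ p$ and $[L^1,L^\infty]_{\theta}=L^p$ with $1/p=1-\theta$, then gives $\|\widetilde{T^t}\|_{L^p(d\lambda_ n)}\le C\|T\|_{S_ p}$ for all $1\le p<\infty$. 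Discretization is not available here: $\widetilde{T^t}$ obeys a sub-mean-value inequality, so its integral dominates, rather than is dominated by, the sampled sum.

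For part (a) I would discretize. Fix an $r$-lattice $\{a_ j\}$ with $r$ small enough that $\{k^{t}_{a_ j}\}$ is a frame for $H^2$, and let $\{D_ j\}$ be an associated partition of $\Bn$ into Borel sets of comparable, bounded hyperbolic size, with $a_ j\in D_ j$. The proof splits into two inequalities. For the first, $\sum_ j\widetilde{T^t}(a_ j)^p\le C\int_{\Bn}\widetilde{T^t}(z)^p\,d\lambda_ n(z)$, I write $\Psi(z)=T^{1/2}K^{t}_ z$, an $H^2$-valued function holomorphic in $\bar z$, so that $\widetilde{T^t}(z)=(1-|z|^2)^{n+t}\|\Psi(z)\|^2$ and $\log\|\Psi\|^2$ is plurisubharmonic; hence $\|\Psi\|^{2p}$ is plurisubharmonic for \emph{every} $p>0$. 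Applying the sub-mean-value inequality for $\|\Psi\|^{2p}$ over the Bergman ball $D(a_ j,r)$ — which follows from the Euclidean sub-mean-value at the origin after composing with the automorphism $\varphi_{a_ j}$ — and using $(1-|z|^2)\asymp(1-|a_ j|^2)$ on $D(a_ j,r)$ together with $v(D(a_ j,r))\asymp(1-|a_ j|^2)^{n+1}$, the exponents cancel and one obtains $\widetilde{T^t}(a_ j)^p\le C\int_{D(a_ j,r)}(\widetilde{T^t})^p\,d\lambda_ n$; summing over $j$ (the $D_ j$ have bounded overlap) gives the first inequality. For the second, $\|T\|_{S_ p}^p\le C\sum_ j\widetilde{T^t}(a_ j)^p$, let $U\colon H^2\to\ell^2$, $Uf=\{\langle f,k^{t}_{a_ j}\rangle\}_ j$, be the analysis operator, $S=U^*U$ the (bounded, invertible) frame operator, and $B=UTU^*$. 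Then $B\ge0$ has diagonal $b_{jj}=\widetilde{T^t}(a_ j)$, and $T=S^{-1}U^*BUS^{-1}$, so the ideal property of $S_ p$ gives $\|T\|_{S_ p}\le C\|B\|_{S_ p}$; finally, since the eigenvalues $\{\mu_ k\}$ of the positive matrix $B$ majorize its diagonal (Schur--Horn) and $x\mapsto x^p$ is concave for $0<p\le1$, one gets $\|B\|_{S_ p}^p=\sum_ k\mu_ k^p\le\sum_ j b_{jj}^p=\sum_ j\widetilde{T^t}(a_ j)^p$. Chaining the two inequalities proves $T\in S_ p$.

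I expect part (a) to be the main obstacle. The delicate point is the first inequality: for $p<1$ a crude sub-mean-value bound runs the wrong way, and it is essential both to realise $\widetilde{T^t}$ as $(1-|z|^2)^{n+t}$ times the squared $H^2$-norm of a holomorphic map — which makes $(\widetilde{T^t})^p$ plurisubharmonic for every $p$ — and to run the mean-value estimate over the anisotropic Bergman balls (via the automorphisms) rather than over Euclidean balls, where the powers of $(1-|a_ j|^2)$ fail to cancel. The second inequality then rests on arranging the frame so that $B=UTU^*$ has diagonal exactly $\widetilde{T^t}(a_ j)$, and on the Schur--Horn majorization together with the concavity of $x^p$, both features special to the range $0<p\le1$. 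By contrast part (b), where the reverse inequality is required, cannot be discretized and is handled by interpolation.
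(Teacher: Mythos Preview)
Your argument is essentially correct and genuinely different from the paper's. A couple of minor slips: $\|K^t_z\|_{H^2}^2\asymp(1-|z|^2)^{-(n+2t)}$, not $(1-|z|^2)^{-(n+t)}$, so the weighted Bergman space appearing in your first display should be $A^2_{2t-1}$ rather than $A^2_{t-1}$; the structure of the argument is unaffected. Also, the diagonal--eigenvalue inequality you invoke for $0<p\le 1$ is Rotfeld's inequality (or follows directly from Jensen applied to $b_{jj}=\sum_k |v_{jk}|^2\mu_k$ and summed over $j$); calling it Schur--Horn is a slight misnomer but the estimate is right, and the passage to infinite dimensions is handled by truncation.

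The paper proceeds quite differently and more uniformly. It shows, by a direct Parseval--Fubini computation with $S=\sqrt{T^p}$ and the isomorphism $R^{-1,t}:H^2\to A^2_{2t-1}$, the \emph{exact} equivalence
\[
T\in S_p \iff \int_{\Bn}\langle T^p k^t_z,k^t_z\rangle_{H^2}\,d\lambda_n(z)<\infty.
\]
Both (a) and (b) then drop out of the single operator inequality $\langle T^p x,x\rangle\le\langle Tx,x\rangle^p$ for $0<p\le 1$ and its reverse for $p\ge1$ (Proposition~1.31 in \cite{Zhu}), applied with $x=k^t_z$. This avoids frames, subharmonicity, majorization, and interpolation altogether, and treats the two ranges of $p$ symmetrically. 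Your approach, on the other hand, is more constructive: the frame/majorization route in (a) makes transparent exactly where the concavity of $x\mapsto x^p$ enters, and the interpolation argument in (b) would generalize immediately to other Banach ideals interpolating between $S_1$ and $\mathcal{B}(H^2)$. The paper's proof is shorter; yours carries more portable machinery.
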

\begin{proof}
The positive operator $T$ is in $S_ {p}$ if and only if $T^p$ is in the trace class $S_ 1(H^2)$. Fix an orthonormal basis $\{e_ k\}$ of $H^2(\Bn)$. Since $T^p$ is positive, it belongs to the trace class if and only if
\[\sum _ k \langle T^p e_ k,e_ k \rangle_{H^2}  <\infty.\]
Let $S=\sqrt{T^p}$. Then
$\sum _ k \langle T^p e_ k,e_ k \rangle _{H^2} =\sum_ k \|S e_ k\|_{H^2}^2,$
and, by \cite[Theorem 4.41]{ZhuBn}, this is comparable to
\[\sum_ k \|R^{-1,t}S e_ k\|_{A^2_{2t-1}}^2.\]
Now, by \eqref{eq1}, Fubini's theorem and Parseval's identity, we have
\begin{displaymath}
\begin{split}
\sum_ k &\,\|R^{-1,t}S e_ k\|_{A^2_{2t-1}}^2=\sum_ k \int_{\Bn}|R^{-1,t}S e_ k(z) |^2\,dv_{2t-1}(z)
\\
&
=\sum_ k \int_{\Bn} \!\!\big |\langle S e_ k, K_ z^{t} \rangle_{H^2} \big  |^2\,dv_{2t-1}(z)
= \!\int_{\Bn} \!\!\left (\sum_k \big |\langle  e_ k, SK_ z^{t} \rangle_{H^2} \big |^2\right ) dv_{2t-1}(z)
\\
&=\int_{\Bn}\|SK_ z^{t}\|_{H^2}^2\,dv_{2t-1}(z)
=\int_{\Bn}\langle T^p K_ z^{t},K_ z^{t}\rangle_{H^2}\,dv_{2t-1}(z)
\\
&=\int_{\Bn}\langle T^p k_ z^{t},k_ z^{t}\rangle_{H^2} \,\|K_ z^{t}\|_{H^2}^{2}\,dv_{2t-1}(z).
\end{split}
\end{displaymath}
Putting all together and taking into account that $\|K_ z^{t}\|_{H^2}^{2}(1-|z|^2)^{2t-1}$ is comparable to $(1-|z|^2)^{-(n+1)}$, we have that $T$ is in $S_ p$ if and only if
\[\int_{\Bn} \langle T^p k_ z^{t},k_ z^{t}\rangle_{H^2}\,d\lambda_ n(z)<\infty.\]
Now, both (a) and (b) are consequences of the inequalities (see \cite[Proposition 1.31]{Zhu})
\[ \langle T^p k_ z^{t},k_ z^{t}\rangle_{H^2} \le  \left [\langle T k_ z^{t},k_ z^{t}\rangle_{H^2} \right ]^p=[\widetilde{T^t}(z)]^p, \qquad 0<p\le 1\]
and
\[[\widetilde{T^t}(z)]^p=\left [\langle T k_ z^{t},k_ z^{t}\rangle_{H^2} \right ]^p \le \langle T^p k_ z^{t},k_ z^{t}\rangle_{H^2},\qquad p\ge 1.\]
This finishes the proof of the lemma.
\end{proof}
\begin{coro}\label{Cor1}
Let $T:H^2(\Bn)\rightarrow H$ be a bounded linear operator, where $H$
is any separable Hilbert space. Let $t>0$ and consider the function $F_ t(z)=\|Tk_ z^t\|_{H}$. If $p\ge 2$ and $T$ is in $S_ p$ then $F_ t\in L^p(\Bn,d\lambda_ n)$. If $0<p\le 2$ and $F_ t\in L^p(\Bn,d\lambda_ n)$, then $T$ belongs to $S_ p$.
\end{coro}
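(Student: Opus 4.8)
The plan is to reduce the corollary to Lemma \ref{KLS} by passing to the positive operator $T^*T$ acting on $H^2(\Bn)$. Two ingredients will drive the argument: the elementary relation between the singular numbers of $T$ and the eigenvalues of $T^*T$, and a direct computation of the symbol $\widetilde{(T^*T)^t}$ at the normalized derivative kernels $k_z^t$.

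First I would record the Schatten bookkeeping. Since the singular numbers $\{s_k(T)\}$ are by definition the square roots of the eigenvalues $\{\lambda_k\}$ of the positive operator $T^*T$, one has $\sum_k s_k(T)^p=\sum_k \lambda_k^{\,p/2}$, so that $T\in S_p$ if and only if $T^*T\in S_{p/2}(H^2)$. Next, writing $A=T^*T$, which is a positive operator on $H^2(\Bn)$, I would compute its symbol at the derivative kernels by the adjoint relation:
\[
\widetilde{A^t}(z)=\langle A k_z^t,k_z^t\rangle_{H^2}=\langle Tk_z^t,Tk_z^t\rangle_{H}=\|Tk_z^t\|_{H}^2=F_t(z)^2.
\]
Consequently $F_t\in L^p(\Bn,d\lambda_n)$ if and only if $\widetilde{A^t}=F_t^2\in L^{p/2}(\Bn,d\lambda_n)$, because $\int_{\Bn}F_t^p\,d\lambda_n=\int_{\Bn}(\widetilde{A^t})^{p/2}\,d\lambda_n$. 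These two equivalences match the Schatten exponent $p$ for $T$ with the exponent $p/2$ appearing in Lemma \ref{KLS} applied to $A$, and likewise convert the $L^p$ condition on $F_t$ into the $L^{p/2}$ condition on $\widetilde{A^t}$.

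With these identifications, both assertions would follow immediately from Lemma \ref{KLS} applied to $A$. For $0<p\le 2$ we have $0<p/2\le 1$, so part (a) of Lemma \ref{KLS} (with exponent $p/2$) gives that $\widetilde{A^t}\in L^{p/2}(d\lambda_n)$ forces $A\in S_{p/2}$; translating through the equivalences above, $F_t\in L^p(d\lambda_n)$ implies $T\in S_p$. For $p\ge 2$ we have $p/2\ge 1$, so part (b) gives that $A\in S_{p/2}$ implies $\widetilde{A^t}\in L^{p/2}(d\lambda_n)$, that is, $T\in S_p$ implies $F_t\in L^p(d\lambda_n)$.

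There is no serious obstacle here; the argument is a formal reduction. The only points requiring a moment's care are verifying the identity $\widetilde{(T^*T)^t}(z)=\|Tk_z^t\|_{H}^2$ via the adjoint, and checking that the squaring $F_t\mapsto F_t^2$ correctly matches the Lebesgue exponent $p$ to the exponent $p/2$ of the reduced Schatten problem. Both are routine once the positive operator $T^*T$ is introduced.
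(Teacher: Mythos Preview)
Your proof is correct and follows essentially the same route as the paper: reduce to the positive operator $A=T^*T$ on $H^2(\Bn)$, use that $T\in S_p$ if and only if $A\in S_{p/2}$, observe that $\widetilde{A^t}(z)=\|Tk_z^t\|_H^2=F_t(z)^2$, and then invoke the two halves of Lemma~\ref{KLS} with exponent $p/2$. The paper compresses all of this into a single sentence, but the content is identical.
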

\begin{proof}
The result is an immediate consequence of Lemma \ref{KLS} since, by definition, $T:H^2\rightarrow H$ is in $S_ p$ if the positive operator $T^* T$ belongs to $S_{p/2}(H^2)$.
\end{proof}

We need the following well known integral estimate that can be found, for example, in \cite[Theorem 1.12]{ZhuBn}.
\begin{otherl}\label{LA}
Let $t>-1$ and $s>0$. There is a positive constant $C$ such that
\[ \int_{\Bn} \frac{(1-|w|^2)^t\,dv(w)}{|1-\langle z,w\rangle |^{n+1+t+s}}\le C\,(1-|z|^2)^{-s}\]
for all $z\in \Bn$.
\end{otherl}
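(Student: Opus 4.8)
The plan is to pass to polar coordinates, reduce the ball integral to a single radial integral, and then estimate that integral by comparing $1-u$ with $1-|z|^2$ on two complementary ranges. Writing $w=r\xi$ with $0\le r<1$ and $\xi\in\Sn$, and using $\langle z,r\xi\rangle=r\langle z,\xi\rangle=\langle rz,\xi\rangle$, the normalization $v(\Bn)=\sigma(\Sn)=1$ gives
\[ \int_{\Bn}\frac{(1-|w|^2)^t\,dv(w)}{|1-\langle z,w\rangle|^{n+1+t+s}}=2n\int_0^1 r^{2n-1}(1-r^2)^t\left(\int_{\Sn}\frac{d\sigma(\xi)}{|1-\langle rz,\xi\rangle|^{n+1+t+s}}\right)dr. \]
The inner integral depends only on $|rz|=r|z|$, so I would feed in the companion sphere estimate $\int_{\Sn}|1-\langle a,\xi\rangle|^{-(n+\gamma)}\,d\sigma(\xi)\le C(1-|a|^2)^{-\gamma}$, valid for $\gamma>0$, with $\gamma=1+t+s>0$ and $a=rz$. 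This sphere bound is part of the same standard package \cite[Theorem 1.12]{ZhuBn}; if one insists on self-containment it follows from the same polar/splitting scheme carried out on $\Sn$, or from the moment expansion sketched at the end.

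After substituting $u=r^2$, it remains to prove
\[ \int_0^1 u^{n-1}(1-u)^t(1-u|z|^2)^{-(1+t+s)}\,du\le C\,(1-|z|^2)^{-s}, \]
and here I would write $x=|z|^2$ and exploit the exact splitting $1-ux=(1-u)+u(1-x)$. The range $x\le 1/2$ is trivial since the kernel is then bounded. For $x$ near $1$ I would cut at $u=x$. On $0<u\le x$ one has $1-u\le 1-ux\le 2(1-u)$, so the integrand is dominated by $(1-u)^{-(1+s)}$ and $\int_0^x(1-u)^{-(1+s)}\,du\lesssim(1-x)^{-s}$, using $s>0$. On $x<u<1$ one has $u(1-x)\asymp(1-x)$ and $1-u<1-x$, whence $1-ux\asymp 1-x$; bounding $(1-ux)^{-(1+t+s)}\lesssim(1-x)^{-(1+t+s)}$ and using $\int_x^1(1-u)^t\,du=\tfrac{1}{t+1}(1-x)^{t+1}$ (finite because $t+1>0$) produces $(1-x)^{-s}$ from this piece too. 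Summing the two contributions gives the claim.

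The genuinely delicate point is this allocation of the exponent. Applying the two crude inequalities $1-ux\ge1-u$ and $1-ux\ge1-x$ at the same time only yields $(1-x)^{-s}\log\frac{1}{1-x}$ and loses the target estimate by a logarithm; the fix is to assign the full power $-(1+t+s)$ to whichever term of $(1-u)+u(1-x)$ dominates, which is exactly what the cut at $u=x$ accomplishes. The two hypotheses enter precisely here: $t>-1$ guarantees integrability of $(1-u)^t$ near $u=1$ and convergence of the resulting Beta factor, while $s>0$ is what turns $\int_0^x(1-u)^{-(1+s)}\,du$ into genuine growth $(1-x)^{-s}$ rather than a logarithm. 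As an independent check, expanding $|1-\langle z,w\rangle|^{-(n+1+t+s)}$ as a double power series, integrating the rotationally nonvanishing moments $\int_{\Bn}|w_1|^{2k}(1-|w|^2)^t\,dv(w)\asymp\Gamma(k+1)/\Gamma(k+n+t+1)$ termwise, leaves a series $\sum_k c_k|z|^{2k}$ with $c_k\asymp k^{s-1}$, whose sum is comparable to $(1-|z|^2)^{-s}$.
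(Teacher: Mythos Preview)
Your argument is correct. The paper does not actually prove this lemma; it simply quotes it as a well-known integral estimate and cites \cite[Theorem~1.12]{ZhuBn}, so there is no in-paper proof to compare with. Your approach---polar coordinates, the companion sphere estimate with exponent $\gamma=1+t+s>0$, and then a radial split at $u=|z|^2$---is exactly the standard textbook route (and indeed is essentially how the cited reference handles it); the series-expansion check you add at the end is the other classical derivation. One very small remark: when you invoke the sphere bound you implicitly use $1+t+s>0$, which holds because $t>-1$ and $s>0$; it might be worth stating this explicitly, but the logic is sound.
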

Now we are ready for the description of the membership in $S_ p(H^2)$ of the integration operator $J_ g$.
\begin{theorem}\label{TSCl}
Let $g\in H(\Bn)$. Then
\begin{enumerate}
\item[(a)] For $n<p<\infty$, $J_ g$ belongs to $S_ p(H^2)$ if and only if $g\in B_ p$, that is,
\begin{equation}\label{CBp}
\int_{\Bn} |Rg(z)|^p \,(1-|z|^2)^{p} \,d\lambda_ n(z)<\infty.
\end{equation}
\item[(b)] If $0<p\le n$ then $J_ g$ is in $S_ p(H^2)$ if and only if $g$ is constant.
\end{enumerate}
\end{theorem}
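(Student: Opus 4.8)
The plan is to reduce the membership $J_ g\in S_ p(H^2)$ to an integrability property of the function $F_ t(z)=\|J_ g k^{t}_ z\|_{H^2}$ supplied by Corollary \ref{Cor1}, and then to match that property with the Besov-type condition \eqref{CBp}. Since $J_ g f(0)=0$, the Littlewood--Paley inequalities together with the basic formula \eqref{form} give $\|J_ g f\|_{H^2}^2\asymp \int_{\Bn}|f(w)|^2\,d\mu_ g(w)$; using that $\|K^{t}_ z\|_{H^2}^2\asymp (1-|z|^2)^{-(n+2t)}$ this yields
\[ F_ t(z)^2\asymp (1-|z|^2)^{n+2t}\int_{\Bn}\frac{d\mu_ g(w)}{|1-\langle w,z\rangle|^{2(n+t)}},\qquad z\in\Bn,\]
so that Corollary \ref{Cor1} applies directly with $T=J_ g$ and $H=H^2$.

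The analytic core is the equivalence $F_ t\in L^{p}(\Bn,d\lambda_ n)\Leftrightarrow g\in B_ p$, for a fixed, suitably large $t$. For the lower bound I would use the subharmonicity of $|Rg|^2$: restricting the integral above to a hyperbolic ball $D(z)$ centred at $z$, where $1-|w|^2\asymp 1-|z|^2\asymp|1-\langle w,z\rangle|$, the sub-mean value property gives $F_ t(z)\gtrsim (1-|z|^2)\,|Rg(z)|$, whence $F_ t\in L^{p}(d\lambda_ n)\Rightarrow g\in B_ p$ for every $p$. For the reverse inequality I would discretize $\mu_ g$ along a separated lattice $\{a_ k\}$, with $b_ k=\mu_ g(D(a_ k))\asymp |Rg(a_ k)|^2(1-|a_ k|^2)^{n+2}$, and estimate $\int_{\Bn}F_ t^{p}\,d\lambda_ n$ by Fubini and the integral estimate of Lemma \ref{LA}, choosing $t$ so large that every exponent that appears is admissible. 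A computation then reproduces the sum $\sum_ k|Rg(a_ k)|^{p}(1-|a_ k|^2)^{p}$, which is comparable to the integral in \eqref{CBp}.

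Feeding this equivalence into Corollary \ref{Cor1} settles two of the four implications for free: the sufficiency when $p\le 2$ (via $g\in B_ p\Rightarrow F_ t\in L^{p}\Rightarrow J_ g\in S_ p$) and the necessity when $p\ge 2$ (via $J_ g\in S_ p\Rightarrow F_ t\in L^{p}\Rightarrow g\in B_ p$). The main obstacle is the two implications on the wrong side of the exponent $2$, namely sufficiency for $p>2$ and necessity for $p<2$, which Corollary \ref{Cor1} cannot reach because Lemma \ref{KLS} is only one-sided. I expect to close them by a Luecking-type discretization: passing to the positive operator $J_ g^{*}J_ g$, whose quadratic form is comparable to that of the Toeplitz operator with symbol $\mu_ g$, and comparing it through the frame $\{k^{t}_{a_ k}\}$ with the diagonal operator having entries $b_ k(1-|a_ k|^2)^{-n}$. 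This reduces membership in $S_{p/2}$ to the $\ell^{p/2}$-summability of those entries, which is once more \eqref{CBp}; the verification that the off-diagonal part is negligible in $S_{p/2}$ is the technical heart of the argument.

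Part (b) then follows by combining the necessity direction with the degeneracy of \eqref{CBp}. When $p\le n$ the weight $(1-|z|^2)^{p-n-1}$ is non-integrable near $\Sn$, and a routine argument shows that $B_ p$ reduces to the constants; hence $g\in B_ p$ forces $Rg\equiv 0$. For $2\le p\le n$ this necessity is already established, while for the remaining small exponents (in particular when $n=1$ and $p\le 1$) I would invoke the same discretization to obtain $J_ g\in S_ p\Rightarrow g\in B_ p$ directly. In every case $g$ must be constant, and conversely $J_ g\equiv 0$ lies trivially in all Schatten classes.
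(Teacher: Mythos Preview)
Your overall architecture is sound: the reduction $\|J_ g f\|_{H^2}^2\asymp\int|f|^2\,d\mu_ g$ identifies $J_ g^*J_ g$ with the Toeplitz-type operator $Q_{\mu_ g}$, and the equivalence $F_ t\in L^{p}(d\lambda_ n)\Leftrightarrow g\in B_ p$ (subharmonicity for the lower bound, discretization plus Lemma~\ref{LA} for the upper) is correct. Corollary~\ref{Cor1} then dispatches the two ``easy'' halves exactly as you say.

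The paper, however, does \emph{not} close the remaining halves by a Toeplitz discretization. For sufficiency with $p\ge 2$ it works directly with an arbitrary orthonormal set $\{e_ k\}$ and the criterion $\sum_ k\|J_ g e_ k\|_{H^2}^p<\infty$: starting from $\|J_ g e_ k\|_{H^2}^2\asymp\int|e_ k|^2\,d\mu_ g$, it represents $|e_ k(z)|$ through the Bergman reproducing formula applied to $R^{-1,1}e_ k$, applies Cauchy--Schwarz and Lemma~\ref{LA}, and after Fubini and H\"older arrives at $\sum_ k\|J_ g e_ k\|_{H^2}^p\lesssim\int|Rg|^p(1-|z|^2)^p\,d\lambda_ n$. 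This is a self-contained kernel estimate, not an operator comparison. For necessity with $p<2$ (needed only when $n=1$, since for $n\ge 2$ the range $p>n$ already forces $p>2$, and part~(b) then follows from necessity at $p=n\ge 2$ together with $S_ p\subset S_ n$) the paper uses the singular-value decomposition of $J_ g$, tests it on reproducing kernels, differentiates, and extracts $|Rg(z)|$ from $\sum_ k|\lambda_ k|\,|Re_ k(z)|^2$ via H\"older and Littlewood--Paley.

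Your Toeplitz route would also succeed, but notice that the paper explicitly remarks at the end of Section~\ref{s6} that the Schatten-class characterization of $Q_{\mu}$ on $H^2(\Bn)$ appears not to be in the literature for $n>1$; your approach therefore absorbs that open result as a subgoal, and the ``off-diagonal negligibility in $S_{p/2}$'' you flag is precisely the delicate part of Luecking's argument, especially when $p/2<1$. The trade-off is clear: your method is conceptually unified (one mechanism handles both hard halves), while the paper's is more elementary and fully self-contained, exploiting the observation that for $n\ge 2$ no $p<2$ argument is ever required.
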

\begin{proof}
Since $B_ p\subset VMOA$, if $g\in B_ p$ then, by Theorem \ref{Tc1}, $J_ g$ is compact and therefore, for $p\ge 2$, it belongs to $S_ p$ if and only if
$\sum_ n \|J_ g e_ k\|_{H^2}^p\le C <\infty$ for all orthonormal sets $\{e_ k\}$ of $H^2$ \cite[Theorem 1.33]{Zhu}. Due to \eqref{Eqp2} we have
\begin{equation}\label{EqSC-1}
\|J_ g e_ k\|_{H^2}^p\asymp \left (\int_{\Bn} \! |e_ k(z)|^2 \,|Rg(z)|^2 \,(1-|z|^2)\,dv(z)\right )^{p/2}.
\end{equation}
Now, since $f\in H^2$ if and only if $R^{-1,1}f \in A^2_ 1(\Bn)\subset A^2_{1+\gamma}(\Bn)$, by using the reproducing formula for the Bergman space $A^2_{1+\gamma}(\Bn)$ to the function $R^{-1,1}e_ k$ one gets
\[ |e_ k(z)|=\big |R_{-1,1}(R^{-1,1}e_ k)(z)\big |\lesssim \int_{\Bn} \frac{|R^{-1,1}e_ k(w)|}{|1-\langle z,w \rangle |^{n+1+\gamma}} \,(1-|w|^2)^{1+\gamma}\, dv(w) \]
with $\gamma>0$ chosen big enough so that all successive applications of Lemma \ref{LA} are going to be correct. Take $0<\varepsilon<1$ with $\varepsilon p<2n$ and apply Cauchy-Schwarz together with Lemma \ref{LA} to obtain
\begin{displaymath}
|e_ k(z)|^2 \lesssim (1-|z|^2)^{-\varepsilon} \int_{\Bn} \frac{|R^{-1,1}e_ k(w)|^2}{|1-\langle z,w \rangle |^{n+1+2\gamma}} \,(1-|w|^2)^{2+2\gamma+\varepsilon}\, dv(w).
\end{displaymath}
Putting this into \eqref{EqSC-1}, using Fubini's theorem, H\"{o}lder's inequality with exponent $p/2$, and taking into account that $\|R^{-1,1}e_ k\|_{A^2_ 1}\lesssim \|e_ k\|_{H^2}$, we obtain
\begin{displaymath}
\begin{split}
\|J_ g e_ k\|_{H^2}^p&\lesssim \left (\int_{\Bn}\!\!|R^{-1,1}e_ k(w)|^2 \,K g(w)\, (1-|w|^2)^{2+2\gamma+\varepsilon} dv(w)\right )^{p/2}
\\
& \lesssim \int_{\Bn}\!\!|R^{-1,1}e_ k(w)|^2 \, Kg(w)^{p/2} \,(1-|w|^2)^{1+\frac{(1+2\gamma+\varepsilon)p}{2}} dv(w)
\end{split}
\end{displaymath}
with
\[ Kg(w):=\int_{\Bn}\!\! \frac{|Rg(z)|^2\,(1-|z|^2)^{1-\varepsilon}\,dv(z)} {|1-\langle z,w \rangle |^{n+1+2\gamma}} .\]
Now, summing on $k$ and using that $$\sum_ k |R^{-1,1}e_ k(w)|^2 \lesssim \|K^1_ w\|_{H^2}^2 \lesssim (1-|w|^2)^{-n-2},$$ we arrive at
\begin{displaymath}
\sum_ k \|J_ g e_ k\|_{H^2}^p \lesssim \int_{\Bn}\!\!Kg(w)^{p/2}\,(1-|w|^2)^{-n-1+\frac{(1+2\gamma+\varepsilon)p}{2}} dv(w).
\end{displaymath}
By H\"{o}lder's inequality and Lemma \ref{LA} we have
\begin{displaymath}
Kg(w)^{p/2} \le \left (\int_{\Bn} \!\!\frac{|Rg(z)|^p\,(1-|z|^2)^{p+t}\,d\lambda_ n(z)} {|1-\langle z,w \rangle |^{n+1+2\gamma}}\right ) (1-|w|^2)^{n+1-t-\frac{(1+\varepsilon)p}{2}-\gamma(p-2)}
\end{displaymath}
with $0<2t<2n-\varepsilon p$. This, together with Fubini's theorem and another application of Lemma \ref{LA} finally gives
\begin{displaymath}
\begin{split}
\sum_ k \|J_ g e_ k\|_{H^2}^p &\lesssim \int_{\Bn} \!\!|Rg(z)|^p\,(1-|z|^2)^{p+t} \left (\int_{\Bn}\!\frac{(1-|w|^2)^{2\gamma-t}dv(w)}{|1-\langle z,w \rangle |^{n+1+2\gamma}}\right ) d\lambda_ n(z)
\\
&\lesssim \int_{\Bn} |Rg(z)|^p\,(1-|z|^2)^{p} \,d\lambda_ n(z)
\end{split}
\end{displaymath}
proving that $J_ g$ belongs to $S_ p(H^2)$. This finishes the proof of the sufficiency in part (a) when $n\ge 2$.\\

Conversely, assume that $J_ g$ belongs to the Schatten class $S_ p(H^2)$ and $p\ge 2$. By Corollary \ref{Cor1}, the function $F(z)=\|J_ g k^1_ z\|_{H^2}$ is in $L^p(\Bn,d\lambda_ n)$, and by \eqref{Eqp2}, this is equivalent to
\begin{equation}\label{CBP-2}
\int_{\Bn} \!\left (\int_{\Bn} \frac{|Rg(w)|^2\,(1-|w|^2)}{|1-\langle w,z\rangle|^{2n+2}}\,dv(w)\right )^{p/2}\!\!\! (1-|z|^2)^{p(n+2)/2} \,d\lambda_ n(z)<\infty.
\end{equation}
Now the well known estimate
\begin{displaymath}
|Rg(z)|^2\le C (1-|z|^2)^n \int_{\Bn}\frac{|Rg(w)|^2\,(1-|w|^2)}{|1-\langle w,z\rangle|^{2n+2}}\,dv(w)
\end{displaymath}
shows that \eqref{CBp} holds. The proof of the Theorem for $n\ge 2$ is now completed since for $p=n$ the condition \eqref{CBp} implies that $g$ must be constant, and $S_ p(H^2)\subset S_ n(H^2)$ for $p<n$.

 This result is a typical example of when the one dimensional case presents more difficulties, mainly because there is more work to do when $n=1$ since the case $1\le p<2$ is still not proved. By Corollary \ref{Cor1}, the condition \eqref{CBP-2} is a sufficient condition for $J_ g$ to be in $S_ p(H^2)$ when $p<2$, but this condition is easily implied by \eqref{CBp} due to H\"{o}lder's inequality and Lemma \ref{LA}. The necessity of \eqref{CBp} when $1\le p<2$ can be done as follows: if $J_ g$ is in $S_ p(H^2)$ then admits the decomposition $J_ g f=\sum_ k \lambda_ k \langle f,e_ k \rangle _{H^2} e_ k$, where $\{\lambda_ k\}$ are the singular numbers of $J_ g$ and $\{e_ k\}$ is an orthonormal set in $H^2$. By testing the previous formula on reproducing kernels and taking radial derivatives one gets
$ K_ z(w) \,R g(w)=\sum_ k \lambda_ k \,\overline{e_ k(z)}\, Re_ k(w).$
Differentiating then in $z$ and taking $w=z$ one obtains
\[ \overline{RK_ z(z)} \,R g(z)=\sum_ k \lambda_ k \,| Re_ k(z)|^2.\]
A calculation gives $RK_ z(z)=|z|^2\,(1-|z|^2)^{-2}$. Then
\begin{displaymath}
\int_{\mathbb{B}_ 1} \!\!|Rg(z)|^p \,(1-|z|^2)^p\,d\lambda_ 1(z)\le \!\int_{\mathbb{B}_ 1} \!\!\left (\sum_ k |\lambda_ k|\,|Re_ k(z)|^2 \!\right )^p \!\!|z|^{-2p}\,(1-|z|^2)^{3p}\,d\lambda_ 1(z).
\end{displaymath}
Now, H\"{o}lder's inequality yields
\begin{displaymath}
\begin{split}
\left (\sum_ k |\lambda_ k|\,|Re_ k(z)|^2 \!\right )^p &\le \left (\sum_ k |\lambda_ k|^p \,|Re_ k(z)|^2 \right ) \left (\sum_ k |Re_ k(z)|^2\right )^{p-1}
\\
& \le \left (\sum_ k |\lambda_ k|^p \,|Re_ k(z)|^2 \right ) \,\|RK_ z\|_{H^2}^{2p-2},
\end{split}
\end{displaymath}
and, since $\|RK_ z\|_{H^2}^2 \lesssim |z|^2\,(1-|z|^2)^{-3}$, we finally obtain
\begin{displaymath}
\begin{split}
\int_{\mathbb{B}_ 1} \!\!|Rg(z)|^p \,(1-|z|^2)^p\,d\lambda_ 1(z)&\le \sum_ k|\lambda_ k|^p \int_{\mathbb{B}_ 1} |Re_ k(z)|^2  \,(1-|z|^2)\,|z|^{-2}\,dv(z)
\\
& \lesssim \sum_ k|\lambda_ k|^p<\infty
\end{split}
\end{displaymath}
proving that \eqref{CBp} holds. Again, if $p=1$ then \eqref{CBp} implies that $g$ must be a constant completing the proof of the theorem.
\end{proof}
The proof of the case $n=1$ of Theorem \ref{TSCl} given in \cite{AS0} relies on the observation that $J_ g^* J_ g$ is essentially the Toeplitz type operator $Q_{\mu_ g}$ with $d\mu_ g(z)=|Rg(z)|^2\,(1-|z|^2)\,dv(z)$, and then appealing to a result of Luecking \cite{Lu} that describes, when $n=1$, the membership in the Schatten classes $S_ p(H^2)$ of the Toeplitz type operator $Q_ {\mu}$ for a positive Borel measure $\mu$ on $\Bn$, defined as
\begin{displaymath}
Q_{\mu} f(z)=\int_{\Bn} \!\!\frac{f(w)}{(1-\langle z,w \rangle )^n}\,d\mu(w),\qquad z\in \Bn.
\end{displaymath}
As far as I know, it seems that the operator $Q_{\mu}$ has not been studied in the setting of Hardy spaces in the unit ball. Here I am going to make some comments on the boundedness, compactness and membership in the Schatten ideals of the operator $Q_{\mu}$ acting on $H^2(\Bn)$, but since this is not the main topic of the paper we will not enter into the details. By using the identity
$$\langle Q_{\mu} f,g\rangle _{H^2}=\int_{\Bn} \!\!f(w)\,\overline{g(w)}\,d\mu(w) $$
it is easy to prove that $Q_{\mu}$ is bounded on $H^2$ if and only if $\mu$ is a Carleson measure, and that the compactness is characterized by $\mu$ being a vanishing Carleson measure.  Concerning the membership of $Q_{\mu}$ in the Schatten classes, Lemma \ref{KLS} can be of some help in order to prove some parts of the analogue of Luecking's result for $n>1$.

\section{Proof of Theorem \ref{LT}}\label{s7}
\subsection{Sufficiency}

Assume first that the function $\widetilde{\mu}$ belongs to $L^{p/(p-s)}(\Sn)$, and let $f\in H^p$. Then, by \eqref{EqG}, H\"{o}lder's inequality with exponent $p/s>1$ and Theorem \ref{NTMT}, we obtain
\begin{displaymath}
\begin{split}
\int_{\Bn} |f(z)|^s \,d\mu(z)&\asymp \int_{\Sn} \int_{\Gamma(\zeta)} |f(z)|^s (1-|z|^2)^{-n}\,d\mu(z)\,d\sigma(\zeta)
\\
& \le \int_{\Sn}|f^*(\zeta)|^s \int_{\Gamma(\zeta)} (1-|z|^2)^{-n}\,d\mu(z)\,d\sigma(\zeta)
\\
&\le C \|f\|_{H^p}^s \cdot \|\widetilde{\mu}\|_{L^{p/(p-s)}(\Sn)}.
\end{split}
\end{displaymath}
\subsection{A version for Poisson integrals}
Next we state and prove a version of Theorem \ref{LT} for invariant Poisson integrals $u_{\varphi}$ of  functions $\varphi$ in $L^p(\Sn)$ that can be of independent interest.
\begin{theorem}
Let $1<p<\infty$, $0<s<p$ and let $\mu$ be a finite positive Borel measure on $\Bn$. Then
\[\int_{\Bn} |u_{\varphi}(z)|^s d\mu(z) \le K_{\mu} \,\|\varphi\|_{L^p(\Sn)}^s\]
if and only if $\widetilde{\mu}\in L^{p/(p-s)}(\Sn)$. Moreover, $\|\widetilde{\mu}\|_{L^{p/(p-s)}}\asymp K_{\mu}$.
\end{theorem}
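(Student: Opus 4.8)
The plan is to treat the two implications separately, since they are of very different natures. The sufficiency is proved exactly as in the sufficiency argument given at the start of this section, the only change being that the holomorphic maximal theorem (Theorem \ref{NTMT}) is replaced by the $L^p$-boundedness of the admissible maximal function of an invariant Poisson integral, $\|u^*_\varphi\|_{L^p(\Sn)}\le C\|\varphi\|_{L^p(\Sn)}$ for $1<p<\infty$ (this is where the hypothesis $p>1$ is used; see \cite[Theorem 5.4.10]{Rud}, already invoked in Step 2 of the proof of Theorem \ref{KP}). Indeed, for $\varphi\in L^p(\Sn)$ and $\widetilde\mu\in L^{p/(p-s)}(\Sn)$, formula \eqref{EqG}, the bound $|u_\varphi(z)|\le u^*_\varphi(\zeta)$ for $z\in\Gamma(\zeta)$, and H\"older's inequality with exponent $p/s>1$ give
\begin{align*}
\int_{\Bn}|u_\varphi(z)|^s\,d\mu(z)
&\asymp\int_{\Sn}\int_{\Gamma(\zeta)}|u_\varphi(z)|^s(1-|z|^2)^{-n}\,d\mu(z)\,d\sigma(\zeta)\\
&\le\int_{\Sn}|u^*_\varphi(\zeta)|^s\,\widetilde\mu(\zeta)\,d\sigma(\zeta)
\le\|u^*_\varphi\|_{L^p(\Sn)}^s\,\|\widetilde\mu\|_{L^{p/(p-s)}(\Sn)},
\end{align*}
so that $K_\mu\le C\,\|\widetilde\mu\|_{L^{p/(p-s)}(\Sn)}$.

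For the converse I would follow Luecking's discretization scheme. Fix a Whitney type decomposition $\Bn=\bigcup_j R_j$ into disjoint Borel sets of bounded hyperbolic diameter, each with a center $z_j$ such that $\delta_j:=1-|z_j|^2\asymp 1-|z|^2$ on $R_j$; set $\zeta_j=z_j/|z_j|$ and $Q_j=Q(\zeta_j,\delta_j)$, so that $\sigma(Q_j)\asymp\delta_j^{\,n}$ and the dilates $\{CQ_j\}$ have bounded overlap. Writing $b_j=\delta_j^{-n}\mu(R_j)$, the comparison of $(1-|z|^2)^{-n}$ on $R_j$ and the geometry of the approach regions give $\widetilde\mu\asymp\sum_j b_j\chi_{CQ_j}$ up to the usual bounded-overlap comparisons, whence
\[\|\widetilde\mu\|_{L^{p/(p-s)}(\Sn)}^{\,p/(p-s)}\asymp\sum_j b_j^{\,p/(p-s)}\,\sigma(Q_j).\]
Everything then reduces to the discrete estimate $\sum_j b_j^{\,p/(p-s)}\sigma(Q_j)\lesssim K_\mu^{\,p/(p-s)}$.

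To produce it I would linearize by randomization. Let $\{\epsilon_j\}$ be a Rademacher sequence, let $c_j\ge0$ be free coefficients, and test the hypothesis on $\varphi=\sum_j c_j\epsilon_j\chi_{Q_j}$. On the domain side, Khinchin's inequality and the bounded overlap give $\mathbb{E}\|\varphi\|_{L^p(\Sn)}^p\asymp\sum_j c_j^p\sigma(Q_j)$. On the measure side, for each fixed $z$ the sum $u_\varphi(z)=\sum_j c_j\epsilon_j u_{\chi_{Q_j}}(z)$ is again Rademacher, so Khinchin yields $\mathbb{E}|u_\varphi(z)|^s\asymp\big(\sum_j c_j^2\,u_{\chi_{Q_j}}(z)^2\big)^{s/2}$; keeping only the index $k$ with $z\in R_k$ and using the elementary lower bound $u_{\chi_{Q_k}}(z)\gtrsim1$ for $z\in R_k$ (immediate from the size of the invariant Poisson kernel on $\Gamma(\zeta)$ and $\sigma(Q_k)\asymp\delta_k^n$), one gets $\mathbb{E}\int_{\Bn}|u_\varphi|^s\,d\mu\gtrsim\sum_k c_k^s\,\mu(R_k)\asymp\sum_k c_k^s b_k\sigma(Q_k)$. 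Taking expectations in the hypothesis and applying Jensen's inequality ($s/p<1$) on the right gives, for every $c_j\ge0$,
\[\sum_k c_k^s\, b_k\,\sigma(Q_k)\lesssim K_\mu\Big(\sum_k c_k^p\,\sigma(Q_k)\Big)^{s/p}.\]
Reading this as the boundedness of the linear functional $(c_k^s)\mapsto\sum_k c_k^s b_k\sigma(Q_k)$ on $\ell^{p/s}(\{\sigma(Q_k)\})$, whose dual norm is exactly $\big(\sum_k b_k^{\,p/(p-s)}\sigma(Q_k)\big)^{(p-s)/p}$, I obtain $\sum_k b_k^{\,p/(p-s)}\sigma(Q_k)\lesssim K_\mu^{\,p/(p-s)}$, i.e. $\|\widetilde\mu\|_{L^{p/(p-s)}(\Sn)}\lesssim K_\mu$.

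The necessity is the main obstacle, and the delicate point inside it is the randomization: for general $s$ no single deterministic test function can overcome the mismatch between the outer power $s$ and the averaging built into $u_\varphi$, whereas Khinchin replaces both nonlinearities by $\ell^2$-sums simultaneously and thereby decouples them. A second place requiring care is the faithful reproduction of $\widetilde\mu$ by the decomposition, namely the comparison $\widetilde\mu\asymp\sum_j b_j\chi_{CQ_j}$ together with the bounded overlap of the enlarged balls, which is exactly what converts the continuous $L^{p/(p-s)}$ norm into the discrete sum on which the duality acts; the positivity of $\mu$ and of the Poisson kernel, used in $u_{\chi_{Q_k}}\gtrsim1$, is essential throughout.
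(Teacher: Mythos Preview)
Your sufficiency argument is correct and coincides with the paper's.

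Your necessity argument, however, contains a genuine gap. You assert that for a Whitney decomposition $\Bn=\bigcup_j R_j$ with centers $z_j$, shadows $Q_j=Q(\zeta_j,\delta_j)$, and $b_j=\delta_j^{-n}\mu(R_j)$, one has
\[
\|\widetilde\mu\|_{L^{p/(p-s)}(\Sn)}^{\,p/(p-s)}\asymp\sum_j b_j^{\,p/(p-s)}\,\sigma(Q_j)
\]
``up to the usual bounded-overlap comparisons''. But the family $\{CQ_j\}$ does \emph{not} have bounded overlap: the shadows of Whitney regions at different scales are nested, so every $\zeta\in\Sn$ lies in one $CQ_j$ per dyadic scale, i.e.\ in infinitely many of them. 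The direction you actually need, $\|\widetilde\mu\|_{L^r}^r\lesssim\sum_j b_j^{\,r}\sigma(Q_j)$ with $r=p/(p-s)>1$, is false in general: take $b_j=1$ for all Whitney regions at scales $k\le N$ and $b_j=0$ otherwise (this comes from a genuine finite positive measure). Then $\widetilde\mu\asymp N$ on all of $\Sn$, so the left side is $\asymp N^r$, while the right side is $\asymp N$. Hence your discrete duality conclusion $\sum_j b_j^{\,r}\sigma(Q_j)\lesssim K_\mu^{\,r}$, even if correct, does not yield $\|\widetilde\mu\|_{L^r}\lesssim K_\mu$. Luecking's scheme (which the paper itself follows in Section~\ref{s7} for the holomorphic Theorem~\ref{LT}) does \emph{not} use a plain Whitney decomposition; it uses a stopping-time family of maximal balls $\mathcal E_m$ adapted to the level sets $\{\widetilde\mu>2^m\}$, precisely to avoid this nesting problem.

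The paper's proof of the necessity is completely different and avoids discretization altogether. One writes, by duality,
\[
\|\widetilde\mu\|_{L^{p/(p-s)}(\Sn)}=\sup_{\varphi\ge 0,\ \|\varphi\|_{L^{p/s}}=1}\int_{\Sn}\widetilde\mu(\zeta)\,\varphi(\zeta)\,d\sigma(\zeta),
\]
and since $(1-|z|^2)\asymp|1-\langle z,\zeta\rangle|$ on $\Gamma(\zeta)$, an exchange of integrals bounds this by $\int_{\Bn}u_\varphi\,d\mu$. For $s=1$ this is exactly the hypothesis. For $0<s<1$ one uses H\"older's inequality on the Poisson integral to get $u_\varphi\le (u_{\varphi^{1/s}})^s$ and applies the hypothesis to $f=\varphi^{1/s}\in L^p$. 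For $s>1$ one picks $t\in\big(1,\tfrac{p-1}{s-1}\big)$, splits $u_\varphi\le u_f^{1/t'}u_g^{1/t}$ with $f=\varphi^{1/s}$ and $g=\varphi^{(1+(s-1)t)/s}$, and estimates the two factors by the hypothesis and by the already-proved sufficiency (first for compactly supported $\mu$); a bootstrap then gives $\|\widetilde\mu\|_{L^{p/(p-s)}}\lesssim K_\mu$. This argument exploits the positivity and multiplicativity of the Poisson kernel in a way that has no holomorphic analogue, which is why the holomorphic Theorem~\ref{LT} requires the heavier machinery you were reaching for.
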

\begin{proof}
The sufficiency of the condition $\widetilde{\mu}\in L^{p/(p-s)}(\Sn)$ follows from the previous argument taking into account that $\|u^*_{\varphi}\|_{L^q(\Sn)}\le C \|\varphi\|_{L^q(\Sn)}$ for $q>1$. The proof of the necessity can be done as follows. Since $p/(p-s)>1$, then by duality,
\begin{displaymath}
\|\widetilde{\mu}\|_{L^{p/(p-s)}(\Sn)}=\sup_{\varphi} \int_{\Sn} \widetilde{\mu}(\zeta)\,\varphi (\zeta)\,d\sigma(\zeta),
\end{displaymath}
where the supremum is taken over all positive $\varphi$ in $L^{p/s}(\Sn)$ with norm one. Using the definition of $\widetilde{\mu}$, that $(1-|z|^2)\asymp |1-\langle z,\zeta \rangle|$ for $z\in \Gamma(\zeta)$, and interchanging the order of integration we arrive at
\begin{displaymath}
\int_{\Sn}\! \widetilde{\mu}(\zeta)\,\varphi (\zeta)\,d\sigma(\zeta)\asymp\!\int_{\Bn} \!\int_{I(z)}\!  \frac{\varphi(\zeta)\,(1-|z|^2)^n}{|1-\langle z,\zeta\rangle |^{2n}} \,d\sigma (\zeta)\, d\mu(z)\le \int_{\Bn}\!\!u_{\varphi}(z) d\mu(z).
\end{displaymath}
If $s=1$, this gives $\|\widetilde{\mu}\|_{L^{p/(p-s)}(\Sn)}\le C\, K_{\mu}$.
If $0<s<1$ let $f=\varphi^{1/s}\in L^p(\Sn)$. By H\"{o}lder's inequality one has $u_{\varphi}(z)\le
 u_ f(z)^s.$
Hence
$$\int_{\Sn} \widetilde{\mu}(\zeta)\,\varphi (\zeta)\,d\sigma(\zeta)\lesssim \int_{\Bn} \! u_ f(z)^s \,d\mu(z) \le K_{\mu} \, \|f\|^s_{L^p(\Sn)}=K_{\mu}\,\|\varphi\|_{L^{p/s}(\Sn)}.$$
Finally, consider the case $s>1$.  Take $t>1$ with $t<(p-1)/(s-1)$, and let $t'$ denote the conjugate exponent of $t$. By H\"{o}lder's inequality,
$ u_{\varphi}(z)\le u_ f(z)^{1/t'} \cdot u_ g(z)^{1/t}, $
with $f=\varphi^{1/s}\in L^p(\Sn)$, $g=\varphi^ {\,\sigma /s}\in L^{p/\sigma}(\Sn)$ and $\sigma=1+(s-1)t$. Another application of H\"{o}lder's inequality yields
\begin{displaymath}
\begin{split}
\int_{\Sn} \widetilde{\mu}(\zeta)\,\varphi (\zeta)\,d\sigma(\zeta)& \le \int_{\Bn} u_ f(z)^{1/t'} \cdot u_ g(z)^{1/t}\, d\mu(z)
\\
&\le \left ( \int_{\Bn} \!u_ f(z)^s d\mu(z)\right )^{\frac{1}{t's}}\!\left ( \int_{\Bn} \!u_ g(z)^{s /\sigma} d\mu(z)\right )^{\frac{\sigma}{ts}}.
\end{split}
\end{displaymath}
By our assumption, we have
\[  \int_{\Bn} \!u_ f(z)^s d\mu(z)\le K_ {\mu} \|f\|_{L^p(\Sn)}^{s}=K_ {\mu} \|\varphi\|_{L^{p/s}(\Sn)}.\]
On the other hand, the choice of $t$ makes $p/\sigma >1$ and therefore, assuming that $\mu$ has compact support on $\Bn$, the proof of the sufficiency part gives
\[\int_{\Bn} \!u_ g(z)^{s/\sigma} d\mu(z) \lesssim \|\widetilde{\mu}\|_{L^{p/(p-s)}(\Sn)} \cdot \|g\|^{s/\sigma}_{L^{p/\sigma}(\Sn)}=\|\widetilde{\mu}\|_{L^{p/(p-s)}(\Sn)} \cdot \|\varphi\|_{L^{p/s}(\Sn)}.\]
All together yields
\[ \int_{\Sn} \widetilde{\mu}(\zeta)\,\varphi (\zeta)\,d\sigma(\zeta) \lesssim K_{\mu}^{1/t's}\cdot \|\widetilde{\mu}\|^{\sigma/ts}_{L^{p/(p-s)}(\Sn)} \cdot \|\varphi\|_{L^{p/s}(\Sn)}\]
proving that $\|\widetilde{\mu}\|_{L^{p/(p-s)}(\Sn)}\le C \,K_{\mu}$.
This gives the result when $\mu$ has compact support on $\Bn$. The result for arbitrary $\mu$ follows from this by an easy limit argument.
\end{proof}
\subsection{The tent spaces $T^p(Z)$}
A sequence of points $\{z_ j\}\subset \Bn$ is said to be separated if there exists $\delta>0$ such that $\beta(z_ i,z_ j)\ge \delta$ for all $i$ and $j$ with $i\neq j$, where $\beta(z,w)$ denotes the Bergman metric on $\Bn$. This implies that there is $r>0$ such that the Bergman metric balls $D_ j=\{z\in \Bn :\beta(z,z_ j)<r\}$ are pairwise disjoints. Taking into account that $v(D_ j)\asymp (1-|z_ j|^2)^{n+1}$, is then an easy consequence of Lemma \ref{LA} that, if $\{z_ j\}$ is a separated sequence in $\Bn$, for $t>n$ one has
\begin{equation}\label{Sep}
\sum_ j \frac{(1-|z_ j|^2)^t}{|1-\langle z,z_ j \rangle |^{t+\varepsilon}} \le C (1-|z|^2)^{-\varepsilon},\qquad z\in \Bn.
\end{equation}
For $0<p<\infty$ and a fixed separated sequence $Z=\{z_ j\}\subset \Bn$, let $T^p(Z)$ consist of those sequences $\lambda=\{\lambda_ j\}$ of complex numbers with
\[ \|\lambda \|_{T^p}^p =\int_{\Sn} \!\!\Big (\!\!\sum_{z_ j\in \Gamma(\zeta)} \!|\lambda_ j |^2 \Big )^{p/2} d\sigma(\zeta)  <\infty.\]
The following result can be thought as the holomorphic analogue of Lemma 3 in Luecking's paper \cite{Lue1}.
\begin{proposition}\label{TKL}
Let $Z=\{z_ j\}$ be a separated sequence in $\Bn$ and let $0<p<\infty$. If $b>n\max(1,2/p)$, then the operator $T_{Z}: T^p(Z)\rightarrow H^p$ defined by
$$T_{Z}(\{\lambda_ j\})=\sum_ j \lambda_ j \,\frac{(1-|z_ j|^2)^{b}}{(1-\langle z, z_ j  \rangle )^b}$$ is bounded.
\end{proposition}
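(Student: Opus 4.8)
The plan is to use the area function description of $H^p$ (Theorem \ref{AreaT}) to reduce the statement to a comparison between the area function of $F:=T_{Z}(\{\lambda_j\})$ and the discrete tent functional defining $\|\cdot\|_{T^p}$. Writing $\rho_j=1-|z_j|^2$ and working first with finite sequences (then passing to the limit), I would use $\|F\|_{H^p}\lesssim|F(0)|+\|AF\|_{L^p(\Sn)}$, noting that $F(0)=\sum_j\lambda_j\rho_j^b$ is dominated by $\|\lambda\|_{T^p}$ through a routine estimate using the separation of $\{z_j\}$ and $b>n$; thus the heart of the matter is $\|AF\|_{L^p(\Sn)}\lesssim\|\lambda\|_{T^p}$. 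Differentiating term by term gives the pointwise bound $|RF(z)|\lesssim\sum_j|\lambda_j|\,\rho_j^{b}\,|1-\langle z,z_j\rangle|^{-b-1}$.

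The first real step is to linearize the square of this sum. Splitting each summand and applying Cauchy--Schwarz with a weight $\rho_j^{\pm c}$ for a small $c\in(0,b-n)$, the separated-sequence estimate \eqref{Sep} (which needs $b>n$) bounds one factor by $(1-|z|^2)^{-1-c}$ and yields $|RF(z)|^2\lesssim(1-|z|^2)^{-1-c}\sum_j|\lambda_j|^2\,\rho_j^{b+c}\,|1-\langle z,z_j\rangle|^{-b-1}$. Inserting this into $AF(\zeta)^2=\int_{\Gamma(\zeta)}|RF(z)|^2(1-|z|^2)^{1-n}\,dv(z)$ and evaluating the resulting Koranyi-cone integrals by the standard computation of Lemma \ref{LA} type leads to $AF(\zeta)^2\lesssim\sum_j|\lambda_j|^2\Psi_j(\zeta)$, where $\Psi_j(\zeta)=\big(\rho_j/|1-\langle\zeta,z_j\rangle|\big)^{M}$ with decay exponent $M=b+c>b$. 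It then remains to prove $\big\|\sum_j|\lambda_j|^2\Psi_j\big\|_{L^{p/2}(\Sn)}\lesssim\|\lambda\|_{T^p}^2=\big\|\sum_j|\lambda_j|^2\mathbf 1_{I(z_j)}\big\|_{L^{p/2}(\Sn)}$; that is, replacing the sharp shadows $\mathbf 1_{I(z_j)}$ by their spread-out tails $\Psi_j$ must not increase the $L^{p/2}$ norm.

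For $p\ge2$ I expect this to be straightforward by duality, since $p/2\ge1$. Testing against $0\le h\in L^{(p/2)'}(\Sn)$ of norm one, and using that $\Psi_j$ is an approximate identity concentrated on $I(z_j)$ with $\int_{\Sn}\Psi_j\,d\sigma\asymp\rho_j^{n}$ (for which one only needs $M>n$), I would bound $\int_{\Sn}\Psi_j\,h\,d\sigma\lesssim\rho_j^{n}\,\mathcal M h$ on $I(z_j)$, where $\mathcal M$ is the Hardy--Littlewood maximal operator on $\Sn$. Summing in $j$ and undoing the tent sum gives $\int_{\Sn}\big(\sum_j|\lambda_j|^2\Psi_j\big)h\,d\sigma\lesssim\int_{\Sn}\big(\sum_{z_j\in\Gamma(\zeta)}|\lambda_j|^2\big)\mathcal M h\,d\sigma\le\|\lambda\|_{T^p}^2\,\|\mathcal M h\|_{L^{(p/2)'}(\Sn)}$, and boundedness of $\mathcal M$ on $L^{(p/2)'}(\Sn)$ closes this case (with $p=2$ treated directly by integrating $AF(\zeta)^2$). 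Only $b>n$ is used, matching $n\max(1,2/p)=n$.

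The case $0<p<2$ will be the main obstacle. Now $L^{p/2}$ is merely a $p/2$-normed space, and the naive route of applying subadditivity directly to $\sum_j|\lambda_j|^2\Psi_j$ is too lossy: it produces $\sum_j|\lambda_j|^{p}\rho_j^{n}$, which for an overlapping family of shadows can be far larger than $\|\lambda\|_{T^p}^{p}$. Instead I would keep the overlap structure and decompose $\Psi_j\lesssim\sum_{m\ge0}2^{-Mm}\mathbf 1_{B_{j,m}}$, where $B_{j,m}$ is the non-isotropic ball of radius $2^{m}\rho_j$ about $z_j$, namely the shadow $I(w_{j,m})$ of the dilated point $w_{j,m}$ at height $2^{m}\rho_j$. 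By $p/2$-subadditivity in $m$, it suffices to bound each $\big\|\sum_j|\lambda_j|^2\mathbf 1_{I(w_{j,m})}\big\|_{L^{p/2}}^{p/2}$ by $C\,2^{mn}\|\lambda\|_{T^p}^{p}$, which is exactly a change-of-aperture inequality for tent functions (the area function with aperture enlarged by $2^{m}$); this is proved not by subadditivity but by a good-$\lambda$/maximal-function argument and carries the scaling $2^{mn}$. Summing the geometric series $\sum_m 2^{-Mmp/2}2^{mn}$ then converges precisely when $Mp/2>n$, i.e. $M>2n/p$; since $M>b$, this is guaranteed once $b>2n/p$. Thus the two regimes together require $b>\max(n,2n/p)=n\max(1,2/p)$, which is the hypothesis, and it is the delicate $p<2$ change-of-aperture step that forces this sharp threshold.
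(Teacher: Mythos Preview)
Your approach is correct and follows the same overall strategy as the paper: bound a suitable area function of $T_Z(\lambda)$ pointwise by $\sum_j|\lambda_j|^2(\rho_j/|1-\langle\zeta,z_j\rangle|)^{\text{exponent}}$, then control the $L^{p/2}(\Sn)$ norm of this sum by $\|\lambda\|_{T^p}^2$. The differences are in implementation. First, the paper uses the higher-order area function $A^{n+1}g$ (legitimate by \cite{AB}) rather than $Ag$; with $k=n+1$ the factor $(1-|z|^2)^{-(n+1)}$ coming from an \emph{unweighted} Cauchy--Schwarz together with \eqref{Sep} cancels exactly against the weight $(1-|z|^2)^{2(n+1)}\,d\lambda_n$ in the $A^{n+1}$ integral, giving the pointwise bound with exponent exactly $b$ and eliminating your auxiliary parameter $c$ and the two-point cone integral. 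Second, for the final ``tails versus indicators'' step the paper simply invokes the analogue of Proposition~1 in Luecking \cite{Lue1} (stated for general measures $\mu$, any $s>0$ and $b>n\max(1,1/s)$, applied here with $s=p/2$) and remarks that it ``is proved in the same way'', whereas you sketch that very proof: duality plus the Hardy--Littlewood maximal function when $p\ge 2$, and a dyadic-shell decomposition combined with a quantitative change-of-aperture estimate when $p<2$. So your write-up is more self-contained, while the paper's choice of derivative order $n+1$ buys a cleaner pointwise estimate.
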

\begin{proof}
Let $\lambda=\{\lambda_ j\}\in T^p(Z)$ and set $g(z)=T_ Z (\lambda)(z)$. By \cite{AB} it is enough to prove that $\|A^k(g)\|_{L^p(\Sn)}\le C \|\lambda\|_{T^p(Z)}$ for some positive integer $k$, where
\[ A^k(g)(\zeta)=\left (\int_{\Gamma(\zeta)} |R^k g(z)|^2 \,(1-|z|^2)^{2k}\,d\lambda_ n(z) \right )^{1/2}.\]
Easy computations involving radial derivatives together with Cauchy-Schwarz implies
\begin{displaymath}
\begin{split}
|R^{n+1} \!g(z)|^2&\lesssim \left (\sum_ j |\lambda_ j| \frac{(1-|z_ j|^2)^{b}}{|1-\langle z,z_ j \rangle|^{b+n+1}}\right )^2
\\
&\le  \left (\sum_ j |\lambda_ j |^2 \frac{(1-|z_ j|^2)^{b}}{|1-\langle z,z_ j \rangle|^{b+n+1}}\right ) \left (\sum_ j \frac{(1-|z_ j|^2)^{b}}
{|1-\langle z,z_ j \rangle|^{b+n+1}}\right ).
\end{split}
\end{displaymath}
This together with \eqref{Sep} gives
\begin{displaymath}
\begin{split}
\big (A^{n+1}(g)(\zeta)\big )^2&\lesssim \sum_ j |\lambda_ j |^2 (1-|z_ j|^2)^b
 \int_{\Gamma(\zeta)}\frac{dv(z)}{|1-\langle z,z_ j \rangle|^{b+n+1}}
\\
& \lesssim \sum_ j |\lambda_ j |^2 \frac{(1-|z_ j|^2)^b} {|1-\langle \zeta,z_ j \rangle|^{b}}.
\end{split}
\end{displaymath}
In the last estimate it has been used that, since $(1-|z|^2)\asymp |1-\langle z,\zeta \rangle |$ for $z\in \Gamma(\zeta)$, then due to \cite[Lemma 2.5]{OF} one has
\begin{displaymath}
\begin{split}
\int_{\Gamma(\zeta)}\!\frac{dv(z)}{|1-\langle z,z_ j \rangle |^{b+n+1}}&\asymp \int_{\Gamma(\zeta)}\!\frac{(1-|z|^2)^{b+1} dv(z)}{|1-\langle z,\zeta \rangle |^{b+n+1}|1-\langle z,z_ j \rangle|^{b+1}}
\\
& \lesssim |1-\langle \zeta,z_ j\rangle |^{-b}.
\end{split}
\end{displaymath}
Therefore,
\begin{displaymath}
\begin{split}
\|A^{n+1} \!g\|^p_{L^p(\Sn)}\lesssim \int_{\Sn}\left ( \sum_ j |\lambda_ j |^2 \frac{(1-|z_ j|^2)^b} {|1-\langle \zeta,z_ j\rangle |^{b}}\right )^{p/2}\!d\sigma(\zeta)
\end{split}
\end{displaymath}
and the proof is finished after the use of the estimate, valid for $s>0$ and $b>n\max(1,1/s)$,
\begin{displaymath}
\int_{\Sn} \left ( \int_{\Bn} \Big (\frac{1-|z|^2}{|1-\langle z,\zeta\rangle|}\Big )^b d\mu(z)\right )^s \,d\sigma(\zeta)\le C \int_{\Sn} \mu(\Gamma(\zeta))^s\,d\sigma(\zeta),
\end{displaymath}
with $\mu$ being a positive measure on $\Bn$. This estimate is the analogue of Proposition 1 in Luecking's paper \cite{Lue1} and is proved in the same way.
\end{proof}

\subsection{Necessity}
We follow the argument of Luecking \cite{Lue1}. According to \cite{Xia}, for each positive integer $k\ge 20$, there are points $\{\zeta_{jk}\}_{j=1}^{m(k)}\subset \Sn$ such that $\Sn=\displaystyle{\bigcup_{j=1}^{m(k)}Q(\zeta_{jk},2^{-k})}$ and
\begin{equation}\label{Nec-1}
Q\big(\zeta_{ik},\frac{1}{9}2^{-k}\big )\cap Q\big (\zeta_{jk},\frac{1}{9}2^{-k}\big )=\emptyset \quad \textrm{ if }\quad i\neq j.
 \end{equation}
Recall that $Q(\zeta,\delta)=\{ \xi\in \Sn : |1-\langle \zeta, \xi \rangle |<\delta\}$. We denote by $\mathcal{N}_ k$ the collection of all non-isotropic balls $Q(\zeta_{jk},2^{-k})$, $1\le j\le m(k)$, and let $\mathcal{N}=\bigcup \mathcal{N}_ k$.  Also, any point $\zeta\in \Sn$ belongs to at most $N$ balls in $\mathcal{N}_ k$, where $N$ depends only on the dimension. If $Q=Q(\zeta,\delta)$ we use the notation $\widehat{Q}=B_{\delta}(\zeta)=\{z\in \Bn: |1-\langle z,\zeta \rangle |<\delta\}$. As in \cite{Lue1}, it is enough to show that the function
\begin{displaymath}
\zeta \mapsto \sup \left \{ \frac{\mu(\widehat{Q})}{\sigma(Q)}:Q\in \mathcal{N}, \zeta\in Q \right \}
\end{displaymath}
belongs to $L^{p/(p-s)}(\Sn)$. Thus, we may assume that $\widetilde{\mu}$ is the above supremum. For each positive integer $m$, let $\mathcal{E}_ m$ denote the collection of all ``maximal" balls $Q\in \mathcal{N}$ with $\mu(\widehat{Q})>2^m \sigma (Q)$, and set $\mathcal{E}=\bigcup \mathcal{E}_ m$. The construction of $\mathcal{E}_ m$ goes as follows: for a fixed $k_ 0$, let $\mathcal{G}_ 0^m$ be the collection of all balls $Q\in \mathcal{N}_{k_ 0}$ with $\mu(\widehat{Q})>2^m \sigma (Q)$. Once $\mathcal{G}^m_ {\ell-1}$ is constructed, then $\mathcal{G}^m_ {\ell}$ consists of those balls $Q\in \mathcal{N}_{k_ 0+\ell}$ satisfying $\mu(\widehat{Q})>2^m \sigma (Q)$ such that $Q$ is not contained in any ball in $\bigcup_{i=0}^{\ell-1}  \mathcal{G}^m_ i$, and then $\mathcal{E}_ m=\bigcup _{i\ge 0}\mathcal{G}^m_ i$. With this construction, is clear that $E_{m+1}\subset E_ m$, where $E_ m=\bigcup _{Q\in \mathcal{E}_ m} Q$. Also, if $Q_ 1=Q(\zeta_ 1,\delta_ 1)$ and $Q_ 2=Q(\zeta_ 2,\delta_ 2)$ are two distinct balls in $\mathcal{E}_ m$, then $$Q\big(\zeta_ 1,\frac{1}{81}\delta_ 1 \big ) \cap Q \big(\zeta_ 2,\frac{1}{81}\delta_ 2\big )=\emptyset.$$
If $Q_ 1$ and $Q_ 2$ are in the same generation, this follows from \eqref{Nec-1}; and if they belong to different generations and the previous intersection is not empty, then one ball is strictly included in the other and therefore would not have been picked.

If $Q=Q(\zeta,\delta)\in \mathcal{E}$, let $z_ Q=(1-c(\alpha)\delta)\,\zeta$ with $c(\alpha)=\big (81\cdot 4\alpha)^{-1}$. Recall that $\alpha$ is the aperture of the admissible approach regions. It is not hard to verify that $Z=\{z_ Q:Q\in \mathcal{E}\}$ is a separated sequence.  By taking $\mu$ with compact support on $\Bn$, we may assume that $Z$ is a finite sequence.

Now, for $b>n\max(1,2/p)$ and $\lambda=\{\lambda_{Q}:Q\in \mathcal{E}\}\in T^p(Z)$, consider the function
\[ f_ t(z)=\sum_{Q\in \mathcal{E}} \lambda_ Q \,r_{Q}(t) \,\frac{(1-|z_ Q|^2)^{b}}{(1-\langle z, z_ Q  \rangle )^b},\quad z\in \Bn,\quad 0<t<1,\]
where $r_ Q(t)$ is a sequence of Rademacher functions. Using our assumption, Proposition \ref{TKL}, integrating on $t$ and applying Khinchine's inequality we get
\begin{displaymath}
\int_{\Bn} \!\!\Big ( \sum _{Q\in \mathcal{E}} |\lambda_ Q|^2 \,F_ Q(z)^{2b}\Big )^{s/2} d\mu(z)\le C \|I_ d\|_{H^p\rightarrow L^s(\mu)}^s \cdot \|\lambda\|^s_{T^p}
\end{displaymath}
with $F_ Q(z)=(1-|z_ Q|^2)/|1-\langle z,z_ Q\rangle|$ that satisfies $F_ Q(z)\ge C>0$ for $z\in \widehat{Q}$. Set $\widehat{E}_ m=\bigcup_{Q\in \mathcal{E}_ m} \widehat{Q}$, and for $Q\in \mathcal{G}^m_ {\ell}$ let
$$G(Q)=\widehat{Q} \,\setminus \,\widehat{Q}\cap \widehat{E}_{m+1}\,\setminus \,\bigcup \big \{\widehat{Q}\cap \widehat{Q'} : Q'\in \mathcal{G}^m_{i},\,i>\ell\big \}.$$
It is obvious that $G(Q_ 1)\cap G(Q_ 2)=\emptyset$ if $Q_ 1$ and $Q_ 2$ belong to distinct  $\mathcal{E}_ m$, and this continues to hold if they are in different generations of the same $\mathcal{E}_ m$. Thus, any point $z\in \Bn$ belongs to at most $N$ sets $G(Q)$ with $N$ depending only on the dimension. It follows that $$\Big ( \sum _{Q\in \mathcal{E}} |\lambda_ Q|^2 \,\chi_{\widehat{Q}}(z)\Big )^{s/2}\ge C\sum _{Q\in \mathcal{E}} |\lambda_ Q|^s \,\chi_{G(Q)}(z),$$ with $C=\min(1,N^{\frac{s-2}{2}})$.
Therefore, we obtain
\begin{equation}\label{Eq7}
\sum _{Q\in \mathcal{E}} |\lambda_ Q|^s \,\mu(G(Q))\le C \|I_ d\|_{H^p\rightarrow L^s(\mu)}^s \cdot \|\lambda\|^s_{T^p}.
\end{equation}
We will apply this inequality to an appropriate sequence of numbers $\{\lambda_ Q\}$. Put $r=p/(p-s)$ and set $\lambda_{Q}=2^{\frac{m}{s}(r-1)}$ if $Q\in \mathcal{E}_ m$. Notice that
\begin{displaymath}
\sum_{Q\in \mathcal{E}_ m} \mu \big (G(Q)\big ) \ge \mu \Big ( \bigcup _{Q\in \mathcal{E}_ m}\! G(Q) \Big )=\mu \big(\widehat{E}_ m \setminus \widehat{E}_{m+1}\big).
\end{displaymath}
Then
\begin{displaymath}
\begin{split}
\sum_{Q\in \mathcal{E}} |\lambda_ Q|^s \,\mu(G(Q))&=\sum_ m 2^{m(r-1)} \sum_{Q\in \mathcal{E}_ m} \mu \big (G(Q)\big )
\\
&\ge \sum_ m 2^{m(r-1)} \Big (\mu(\widehat{E}_ m)- \mu ( \widehat{E}_{m+1})\Big ).
\end{split}
\end{displaymath}
By a typical covering lemma of Vitali type (see \cite[p.\,9]{Stein}), there is a sequence $\mathcal{F}_ m$ of pairwise disjoint balls $Q\in \mathcal{E}_ m$ with
$\sigma (E_ m) \le C \sum_{Q\in \mathcal{F}_ m} \sigma(Q)$ (here the constant $C$ depends only on the dimension). This implies
\begin{displaymath}
\begin{split}
\mu(\widehat{E}_ m)&=\mu \Big ( \!\bigcup_{Q\in \mathcal{E}_ m} \!\!\widehat{Q} \Big ) \ge \mu \big ( \!\!\bigcup_{Q\in \mathcal{F}_ m} \!\!\widehat{Q} \big )=\!\sum _{Q\in \mathcal{F}_ m} \!\mu(\widehat{Q})
\\
&\ge 2^m \!\!\sum _{Q\in \mathcal{F}_ m} \!\sigma(Q)\ge C 2^m \sigma(E_ m).
\end{split}
\end{displaymath}
Now, summing by parts  we obtain
\begin{equation}\label{Est1}
\begin{split}
\sum _{Q\in \mathcal{E}}|\lambda_ Q|^s \,\mu(G(Q))&
\ge \sum_ m \big (2^{m(r-1)}-2^{(m-1)(r-1)} \big ) \,\mu(\widehat{E}_ m)
\\
&\ge C \sum_ m   2^{mr}\, \sigma(E_ m)\ge C \,\|\widetilde{\mu}\|_{L^r(\Sn)}^r,
\end{split}
\end{equation}
where  the last estimate is due to the fact that $\widetilde{\mu}(\zeta) \asymp 2^m$ for $\zeta \in E_ m\setminus E_{m+1}$. On the other hand, using that $d(z,w)=|1-\langle z,w \rangle |^{1/2}$ satisfies the triangle inequality \cite[Proposition 5.1.2]{Rud} together with the choice made on the points $z_ Q$, we see that $z_ Q\in \Gamma(\zeta)$ implies that $\zeta \in \widetilde{Q}$, where $\widetilde{Q}=Q(\xi, \frac{1}{81} \delta)$ if $Q=Q(\xi,\delta)$. We know that $\widetilde{Q}_ 1 \cap \widetilde{Q}_ 2=\emptyset$ if $Q_ 1$ and $Q_ 2$ are in $\mathcal{E}_ m$. Therefore,
\begin{displaymath}
\begin{split}
\|\lambda \|^p_{T^p}& =\int_{\Sn} \!\!\Big (\!\sum_{z_ Q\in \Gamma(\zeta)} |\lambda _ Q|^2 \Big )^{p/2} \!\! d\sigma(\zeta)
\le \int_{\Sn} \!\!\Big (\sum _{Q\in \mathcal{E}}|\lambda _ Q|^2 \chi _{{\widetilde{Q}}}(\zeta)\Big )^{p/2} \!\! d\sigma(\zeta)
\\
&\le \int_{\Sn} \!\!\Big (\sum_ {m}  2^{\frac{2m}{s}(r-1)} \chi_{_{E_ m}}(\zeta)\Big )^{p/2} \!\! d\sigma(\zeta).
\end{split}
\end{displaymath}
Finally, a summation by parts gives
\begin{equation}\label{Est2}
\begin{split}
\|\lambda \|^p_{T^p}&\le C\int_{\Sn} \!\!\Big (\sum_ {m}  2^{\frac{2m}{s}(r-1)} \chi_{_{E_ m\setminus E_{m+1}}}(\zeta)\Big )^{p/2} \!\! d\sigma(\zeta)
\\
&=C \sum_ m  2^{\frac{mp}{s}(r-1)}\sigma(E_ m\setminus E_{m+1})\le C \|\widetilde{\mu}\|^r_{L^r(\Sn)}.
\end{split}
\end{equation}
Putting the two previous estimates \eqref{Est1} and \eqref{Est2} into \eqref{Eq7} gives
\[ \|\widetilde{\mu}\|^r_{L^r(\Sn)} \le C \|I_ d\|_{H^p\rightarrow L^s(\mu)}^s \cdot \|\widetilde{\mu}\|^{rs/p}_{L^r(\Sn)} \]
that gives $\|\widetilde{\mu}\|_{L^r(\Sn)} \le C \|I_ d\|_{H^p\rightarrow L^s(\mu)}^s$ for $\mu$ with compact support on $\Bn$. The result for an arbitrary measure $\mu$ follows from this by an standard limit argument. This completes the proof of Theorem \ref{LT}.


\begin{thebibliography}{}
\bibitem{AB} P. Ahern \and J. Bruna, \emph{Maximal and area integral characterizations of Hardy-Sobolev spaces in the unit ball of $\Cn$}, Rev. Mat. Iberoamericana 4 (1988), 123--153.
\bibitem{Alek} A. Aleksandrov, `Function Theory in the Ball', in \emph{Several Complex Variables II} (G.M. Khenkin \and A.G. Vitushkin, editors), 115--190, Springer-Verlag, Berlin, 1994.

\bibitem{AP} A. Aleksandrov \and V. Peller, \emph{Hankel operators and similarity to a contraction}, Internat. Math. Res. Notices 6 (1996), 263--275.

\bibitem{AC} A. Aleman \and J. Cima, \emph{An integral operator on $H^p$ and Hardy's inequality}, J. Anal. Math. 85 (2001), 157--176.

\bibitem{AO} A. Aleman \and O. Constantin, \emph{Spectra of integration operators on weighted Bergman spaces}, J. Anal. Math. 109 (2009), 199--231.

\bibitem{AS0} { A. Aleman \and A. Siskakis}, \emph{An integral operator on $H^p$},  Complex
Variables 28 (1995), 149--158.

\bibitem{AS} { A. Aleman \and A. Siskakis}, \emph{Integration operators on
Bergman spaces},  Indiana Univ. Math. J. 46 (1997),
337--356.

\bibitem{Av-S} K. Avetisyan \and S. Stevic, \emph{Extended Ces\`{a}ro operators between different Hardy spaces}, Appl. Math. Comput. 207 (2009), 346--350.

\bibitem{Cal} A. Calder\'{o}n, \emph{Commutators of singular integral operators}, Proc. Nat. Acad. Sci. USA 53 (1965), 1092--1099.

\bibitem{Car0} L. Carleson, \emph{An interpolation problem for bounded analytic functions}, Amer. J. Math. 80 (1958), 921--930.

\bibitem{Car1} L. Carleson, \emph{Interpolations by bounded analytic functions and the corona problem}, Ann. of Math. 76 (1962), 547--559.

\bibitem{CRW} R. Coifman, R. Rochberg \and G. Weiss, \emph{Factorization theorems for Hardy spaces in several variables}, Ann. of Math. 103 (1976), 611--635.

\bibitem{OC} O. Constantin, \emph{A Volterra-type integration operator on Fock spaces}, Proc. Amer. Math. Soc. 140 (2012), 4247--4257.

\bibitem{Du} P. Duren, {\em{Extension of a theorem of Carleson}}, Bull. Amer. Math. Soc. 75 (1969), 143–-146.

\bibitem{FS} C. Fefferman \and E. Stein, \emph{$H^p$ spaces of several variables}, Acta Math. 129 (1972), 137--193.

\bibitem{GL} J. Garnett \and R. Latter, \emph{The atomic decomposition for Hardy spaces in several complex variables}, Duke Math. J. 45 (1978), 815--845.

\bibitem{Go} M. Gowda, \emph{Nonfactorization theorems in weighted Bergman and Hardy spaces on the unit ball of $\Cn$}, Trans. Amer. Math. Soc. 277 (1983), 203--212.

\bibitem{H} L. H\"{o}rmander, \emph{$L^p$ estimates for (pluri-)subharmonic functions}, Math. Scand. 20 (1967), 65--78.

\bibitem{Hu} Z. Hu, \emph{Extended Ces\`{a}ro operators on mixed norm spaces}, Proc. Amer. Math. Soc. 131 (2003), 2171--2179.

\bibitem{JP} M. Jevtic \and M. Pavlovic, \emph{Littlewood-Paley type inequalities for $\mathcal{M}$-harmonic functions}, Publ. Inst. Math. 64 (1998), 36--52.


\bibitem{LS1} S. Li \and S. Stevic, \emph{Riemann-Stieltjes operators on Hardy spaces in the unit ball of $\Cn$}, Bull. Belg. Math. Soc. Simon Stevin 14 (2007), 621--628.

\bibitem{Lu} D.H. Luecking, {\em Trace ideal criteria for Toeplitz
operators,} J. Funct. Anal. 73 (1987), 345--368.

\bibitem{Lue1} D.H. Luecking, \emph{Embedding derivatives of Hardy spaces into Lebesgue spaces}, Proc. London Math. Soc. 63 (1991), 595--619.

\bibitem{MZ} J. Marcinkiewicz \and A. Zygmund, \emph{On a theorem of Lusin}, Duke Math. J. 4 (1938), 473--485.

\bibitem{Now} M. Nowak, \emph{Function spaces in the unit ball of $\Cn$}, Complex Function Spaces (Mekrij\"{a}rvi, 1999), 171--197, Univ. Joensuu Dept. Math. Rep. Ser. 4, Joensuu, 2001.

\bibitem{OF} J.M. Ortega \and J. Fabrega, \emph{Pointwise multipliers and corona type decomposition in $BMOA$}, Ann. Inst. Fourier (Grenoble) 46 (1996), 111--137.
\bibitem{PP} { J. Pau \and J.~A. Pel\'{a}ez}, \emph{Embedding
theorems and integration operators on Bergman spaces with rapidly
decreasing weights},  J. Funct. Anal. 259 (2010), 2727--2756.

\bibitem{PR} J.A. Pel\'{a}ez \and J. R\"{a}ttya, \emph{Weighted Bergman spaces induced by rapidly increasing weights}, Mem. Amer.
Math. Soc. {\bf{227}} n. 1066, ISBN 978-0-8218-8802-5 (print).

\bibitem{PTW} S. Petermichl, S. Treil \and B. Wick, \emph{Carleson potentials and the reproducing kernel thesis for embedding theorems}, Illinois J. Math. 51 (2007), 1249--1263.
\bibitem{Pom} C. Pommerenke, {\em{Schlichte funktionen und analytische funktionen von beschr\"{a}nkter mittlerer oszillation}}, Comment. Math. Helv. \textbf{52} (1977), 591--602.

\bibitem{Rud} W. Rudin, `Function Theory in the Unit Ball of $\mathbb{C}^n$', Springer, New York, 1980.

\bibitem{Msm} M.P. Smith, \emph{Testing Schatten class Hankel operators and Carleson embeddings via reproducing kernels}, J. London Math. Soc. 71 (2005), 172--186.

\bibitem{Stein} E. Stein, `Singular integrals and differentiability properties of functions', Princeton University Press, Princeton, New Jersey, 1970.

\bibitem{Stoll1} M. Stoll, \emph{A characterization of Hardy spaces on the unit ball of $\Cn$}, J. London Math. Soc. 48 (1993), 126--136.


\bibitem{WuIEOT} Z. Wu, \emph{A new characterization for Carleson measures and some applications}, Integr. Equ. Oper. Theory 71 (2011), 161--180.

\bibitem{Xia} J. Xia, \emph{On the Schatten class membership of Hankel operators on the unit ball}, Illinois J. Math. 46 (2002), 913--928.

\bibitem{XiaoLMS} J. Xiao, \emph{Riemann-Stieltjes operators on weighted Bloch and Bergman spaces of the unit ball}, J. London Math. Soc. 70 (2004), 199--214.

\bibitem{Xiao-Conf} J. Xiao, \emph{Riemann-Stieltjes operators between weighted Bergman spaces}, Complex and Harmonic Analysis, 205--212, DEStech Publ., Inc., Lancaster, PA, 2007.

\bibitem{ZZ} R. Zhao \and K. Zhu, \emph{Theory of Bergman spaces in the
unit ball of $\mathbb{C}^n$},  Mem. Soc. Math. Fr. 115,
2008.

\bibitem{ZhuBn} K. Zhu, `Spaces of Holomorphic Functions in the Unit Ball', Springer-Verlag, New York, 2005.

\bibitem{Zhu}  K. Zhu, `Operator Theory in Function Spaces', Second Edition,
Math. Surveys and Monographs 138, American Mathematical
Society: Providence, Rhode Island, 2007.
\end{thebibliography}
\end{document}